\theoremstyle{plain}
\newtheorem*{thm*}{Theorem}
\newtheorem{theorem}{Theorem}
\Crefname{theorem}{Theorem}{Theorems}
\newtheorem*{lem*}{Lemma}
\newtheorem{lemma}[theorem]{Lemma}
\Crefname{lemma}{Lemma}{Lemmas}
\newtheorem*{claim*}{Claim}
\crefname{claim}{Claim}{Claims}
\Crefname{claim}{Claim}{Claims}
\newtheorem{prop}[theorem]{Proposition}
\Crefname{prop}{Proposition}{Propositions}
\newtheorem{corollary}[theorem]{Corollary}
\crefname{corollary}{Corollary}{Corollaries}
\newtheorem{conj}[theorem]{Conjecture}
\crefname{conj}{Conjecture}{Conjectures}
\newtheorem*{conj*}{Conjecture}
\Crefname{qn}{Question}{Questions}
\Crefname{obs}{Observation}{Observations}
\Crefname{ex}{Example}{Examples}
\theoremstyle{definition}
\Crefname{prob}{Problem}{Problems}
\newtheorem{defn}[theorem]{Definition}
\Crefname{defn}{Definition}{Definitions}
\newtheorem*{defn*}{Definition}
\theoremstyle{remark}
\renewenvironment{proof}[1][]{\begin{trivlist}
\item[\hspace{\labelsep}{\bf\noindent Proof#1.\/}] }{\qed\end{trivlist}}
\newcommand{\ceil}[1]{
    \left\lceil #1 \right\rceil
}
\newcommand{\floor}[1]{
    \left\lfloor #1 \right\rfloor
}
\newcommand{\eps}{\varepsilon}
\renewcommand{\P}{\mathbb{P}}
\newcommand{\E}{\mathbb{E}}
\newcommand{\N}{\mathbb{N}}
\newcommand{\R}{\mathbb{R}}
\newcommand{\G}{\mathcal{G}}
\DeclareMathOperator{\bin}{Bin}
\def\expandafter\normalsize\expandafter{%
    \normalsize
    \setlength\abovedisplayskip{8pt}
    \setlength\belowdisplayskip{8pt}
    \setlength\abovedisplayshortskip{4pt}
    \setlength\belowdisplayshortskip{4pt}
}
 \setlist[itemize]{leftmargin=*}
\newcommand{\optionaldesc}[2]{%
  \phantomsection
  #1\protected@edef\@currentlabel{#1}\label{#2}%
}
\title{Towards the Erd\H{o}s-Gallai Cycle Decomposition Conjecture}
\author{
Matija Buci\'c\thanks{School of Mathematics, Institute for Advanced Study and Department of Mathematics, Princeton University, Princeton, USA. Email: \href{mailto:matija.bucic@ias.edu} {\nolinkurl{matija.bucic@ias.edu}}. Supported in part by NSF Grant CCF-1900460.} 
\and
Richard Montgomery\thanks{Mathematics Institute, University of Warwick, Coventry, CV4 7AL, UK.
Email: \href{mailto:richard.montgomery@warwick.ac.uk} {\nolinkurl{richard.montgomery@warwick.ac.uk}}.
Supported by the European Research Council (ERC) under the European Union Horizon 2020 research and innovation
programme (grant agreement No. 947978) and the Leverhulme trust.}
}
\date{}
\begin{document}

\maketitle

\begin{abstract}
    In the 1960's, Erd\H{o}s and Gallai conjectured that the edges of any $n$-vertex graph can be decomposed into $O(n)$ cycles and edges. We improve upon the previous best bound of $O(n \log \log n)$ cycles and edges due to Conlon, Fox and Sudakov, by showing an $n$-vertex graph can always be decomposed into $O(n \log^{\star} n)$ cycles and edges, where $\log^{\star}n$ is the iterated logarithm function. 
\end{abstract}

\section{Introduction}
When is it possible to decompose a graph into edge disjoint subgraphs with certain properties? 
Many classical problems in extremal combinatorics fall within this framework and its natural hypergraph generalisation, while decomposition problems have strong links to many other fields, including the design of experiments, coding theory, complexity theory and distributed computing (see, for example, \cite{network_decompositions1989,designs_applications1989,jukna_complexity2006}). 
The particular case where we seek to decompose a graph into cycles has a long history, dating back to the 18th century and Euler's result on the existence of Euler tours. As Veblen~\cite{veblen1,veblen2} observed for his algebraic approach to the Four-Colour Theorem, Euler's result immediately implies that any graph with even vertex degrees (i.e., any \emph{Eulerian} graph) has a decomposition into cycles. As it is immediate that any graph with a vertex of odd degree cannot be decomposed into cycles, this exactly characterises which graphs have cycle decompositions.

Another very classical cycle decomposition result is due to Walecki \cite{lucas1883recreations} from 1892, who showed it is possible to decompose any complete graph with an odd number of vertices into Hamilton cycles. This gives a cycle decomposition into few cycles, indeed, into optimally few cycles. This raises a very natural question of whether every Eulerian graph has a cycle decomposition into few cycles? That only $O(n)$ cycles might be needed to decompose any $n$-vertex Eulerian graph is easily seen to be equivalent to the following classical conjecture of Erd\H{o}s and Gallai \cite{erdos1966representation} dating back to the 1960's, which is one of the major open problems on graph decompositions.

\begin{conj}\label{conj:EG}
Any $n$-vertex graph can be decomposed into $O(n)$ cycles and edges.
\end{conj}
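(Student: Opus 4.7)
The statement is the full Erdős--Gallai conjecture, still open. My plan would be to follow the only framework known to give sub-quadratic bounds, namely iteratively extracting long cycles from an Eulerian remainder, while trying to close the final logarithmic gap via a stronger structural dichotomy. First I would reduce to the Eulerian case: every $n$-vertex graph has at most $n$ vertices of odd degree, which can be paired up along edge-disjoint paths of a spanning structure, so it suffices to delete at most $n-1$ edges (a near-spanning forest of parity-fixing paths) to obtain an Eulerian graph $H$ on the same vertex set. Those deleted edges contribute $O(n)$ to the decomposition, so the whole task reduces to decomposing any $n$-vertex Eulerian graph into $O(n)$ cycles.

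The central engine I would aim for is a \emph{linear long-cycle lemma}: every Eulerian graph $H$ in which $n'$ vertices have positive degree contains a cycle of length at least $cn'$ for some absolute constant $c>0$. Granting such a lemma, I would run the following process on $H$: repeatedly strip off one such long cycle and restart on the remaining Eulerian graph $H'$, which still has all degrees even. To convert this into an $O(n)$ bound I want \emph{amortised} geometric progress: if $n_i$ denotes the number of non-isolated vertices before the $i$-th cycle is removed, I would show that $\sum_i 1 \le C\cdot n$ either because the $n_i$ decrease by a constant factor infinitely often, or because all cycles taken from a ``plateau'' of roughly constant $n_i$ collectively use $\Omega(n_i^2)$ edges, so the number of such cycles at scale $n_i$ is at most $O(n_i)$ and the scales form a geometric sequence summing to $O(n)$.

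The technically plausible route to the long-cycle lemma is a density-increment / expansion dichotomy in the spirit of the Conlon--Fox--Sudakov argument. Given $H$ Eulerian on $n'$ non-isolated vertices, either (i) $H$ contains a subgraph $H_0$ on $\Omega(n')$ vertices that is a sublinear expander at the relevant scale, in which case classical long-path / long-cycle results for expanders (à la Koml\'os--Szemer\'edi, or more modern robust variants) produce a cycle of length $\Omega(|V(H_0)|) = \Omega(n')$; or (ii) $H$ admits a sparse cut separating it into Eulerian pieces. In case (ii) one would need an \emph{edge-boundary absorption} step not present in the $\log^\star n$ argument: use the constantly-many cut edges to stitch long cycles from two sides into a single long cycle, so that the lemma holds globally rather than merely on the expanding piece. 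Throughout, parity must be preserved: when we contract or add auxiliary edges to simulate an Eulerian circuit that jumps between parts, we must ensure the residual graph stays Eulerian so that the inductive step applies.

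The main obstacle, and the reason this is a 60-year-old problem, is precisely the absorption step in case (ii): the iterative expander approach loses a factor at each scale, and that is exactly why the current best bounds are $O(n\log\log n)$ and $O(n\log^\star n)$ rather than $O(n)$. Closing this would require either a one-shot decomposition (bypassing iterated expansion entirely, perhaps via a global algebraic or flow-based argument that packs $\Theta(\Delta)$ Hamilton-like cycles in quasi-random regular components and glues them), or a careful bookkeeping scheme in which the cycles produced at different scales share vertices in a controlled way so that the geometric sum telescopes to $O(n)$ rather than $O(n)\cdot(\text{number of scales})$. I would expect most effort to go into designing this bookkeeping, as the expander-side long-cycle result is by now fairly robust.
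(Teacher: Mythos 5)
The statement you are addressing is Conjecture~\ref{conj:EG} itself, which this paper does \emph{not} prove and which remains open; the paper only establishes the weaker bound $O(n\log^{\star}n)$ of Theorem~\ref{thm:logstar}. Your reduction to the Eulerian case is fine (and matches the standard observation recalled in the introduction and in Section~\ref{sec:final}), but your central engine, the ``linear long-cycle lemma'', is false as stated: an Eulerian graph with $n'$ non-isolated vertices need not contain a cycle of length $\Omega(n')$. A disjoint union of triangles, or a connected chain of triangles glued at cut vertices (all degrees $2$ or $4$), is Eulerian on $n'$ non-isolated vertices yet has no cycle longer than $3$. Consequently the amortisation you describe collapses: the claim that all cycles removed at a ``plateau'' of scale $n_i$ collectively use $\Omega(n_i^2)$ edges presumes that \emph{every} intermediate Eulerian graph at that scale still contains a cycle of length $\Omega(n_i)$, which is exactly the unproven (indeed false in general) assertion. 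Note also that removing a long cycle need not decrease the number of non-isolated vertices at all, so the geometric-decay alternative is not available either.

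The dichotomy you invoke to repair this (expander piece versus sparse cut, plus an ``edge-boundary absorption'' step) is precisely where the known arguments lose their logarithmic factors, and your proposal does not supply the missing ingredient. After extracting $O(n)$ cycles from the expanding pieces, the uncovered edges --- those deleted to create the (almost-)decomposition into expanders, cf.\ Lemma~\ref{splitting-into-expanders}, together with the sparse leftover inside each expander --- form a graph with no usable expansion, and one must recurse on it; each round costs $\Theta(n)$ cycles, which is the source of the $O(n\log\log n)$ bound of \cite{conlon2014cycle} and the $O(n\log^{\star}n)$ bound here. Stitching cycles from two Eulerian pieces through a bounded number of cut edges does not in general produce a single cycle (parity and edge-disjointness aside, the pieces may simply be too small to contain any long cycle, as in the triangle examples), so case (ii) is not an absorption step but the open problem itself. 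In short, the proposal is a programme whose key lemma is false as stated and whose repair step is unresolved; it does not constitute a proof, and no proof of Conjecture~\ref{conj:EG} exists in this paper or elsewhere.
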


While \Cref{conj:EG} is equivalent to conjecturing that every $n$-vertex Eulerian graph can be decomposed into $O(n)$ cycles, as noted above, if they both hold then the optimal implicit constants in these conjectures seem likely to be different. For the Eulerian problem, Haj\'os conjectured that $\frac n2$ cycles should be sufficient \cite{lovasz1968covering} (see also~\cite{dean1986smallest,chung1980,bondyEG1990,fan2003covers,girao2021path}), while the best known lower bound for the number of cycles and edges required in \Cref{conj:EG} is $(\frac{3}{2}-o(1))n$, as observed by Erd\H{o}s in 1983 \cite{erdHos1983some}, improving on a previous construction of Gallai \cite{erdos1966representation} (see \Cref{sec:final}).

Since its formulation, the Erd\H{o}s-Gallai Conjecture has often been highlighted (see, for example,~\cite{bondyEG1990,pyber1996covering,pyberEG1991,dano2015,glock2016optimal,girao2021path,conlon2014cycle}), with Erd\H{o}s himself mentioning it in many of his open problem collections \cite{erdHos1983some,erdHossome1971,erdossolved1981,erdos1973problems}.
Despite this attention, and a lot of work on related problems over the years, direct progress towards the Erd\H{o}s-Gallai Conjecture has only been made within the last decade. The previous related results, which we discuss first, are mostly on the analogous path decomposition problem and the covering version of the Erd\H{o}s-Gallai conjecture.

\textbf{Path decompositions.} In the 1960's, Gallai~\cite{lovasz1968covering} posed the analogous path decomposition version of \Cref{conj:EG}. In particular, he conjectured that any connected $n$-vertex graph can be decomposed into at most $\frac{n+1}{2}$ paths. Lov\'asz \cite{lovasz1968covering} in 1968 proved that any graph can be decomposed into at most $n-1$ paths. This follows easily from his complete solution to the problem of how many paths \emph{or} cycles one needs to decompose an $n$-vertex graph, to which the answer is $\floor{\frac n2}$. Currently, the best general bound in the path decomposition problem is due independently to Dean and Kouider~\cite{dean2000gallai} and Yan~\cite{yan1998path}, who showed that any graph can be decomposed into at most $\lfloor \frac23 n\rfloor$ paths.
Gallai's path decomposition conjecture is known to hold for quite a few special classes of graphs, with connected planar graphs being the most recent addition to the list. This latest result is due to Blanch\'e, Bonamy and Bonichon in \cite{blanche2021gallai}, where a more exhaustive list of partial results can also be found. 

\textbf{Covering problems.} Another interesting direction which has attracted a lot of attention is the covering version of \Cref{conj:EG}, in which we do not insist that the cycles we find should be disjoint, only that together they contain all the edges of the host graph. 
In 1985, Pyber~\cite{pyber1985erdHos} proved the covering version of the Erd\H{o}s-Gallai conjecture, showing that the edges of any $n$-vertex graph can be covered with $n-1$ cycles and edges. The analogous covering version of Gallai's conjecture, raised by Chung \cite{chung1980} in 1980, has been settled first approximately by Pyber~\cite{pyber1996covering} in 1996 and then completely by Fan~\cite{fan2002subgraph} in 2002, who showed that the edges of any connected graph can be covered by $\ceil{ \frac{n}{2}}$ paths. The covering version of Haj\'os's conjecture was also solved by Fan~\cite{fan2003covers}, who showed that any $n$-vertex Eulerian graph can be covered by at most $\lfloor \frac{n-1}{2}\rfloor$ cycles, settling another conjecture of Chung. As with the other two covering results above, this bound is best possible.

\textbf{Results on the Erd\H{o}s-Gallai conjecture.} In more recent years, the Erd\H{o}s-Gallai conjecture (along with more accurate results on the implicit bounds) has been shown to hold for two large specific classes of graphs -- random graphs and graphs with linear minimum degree.
The conjecture was first established for a typical binomial random graph $G(n,p)$ (for any $p=p(n)$) by Conlon, Fox and Sudakov \cite{conlon2014cycle}. Kor{\'a}ndi, Krivelevich, and Sudakov \cite{dano2015} found the correct leading constant here, showing that $(\frac{1}{4}+\frac{p}{2}+o(1))n$ cycles and edges are typically sufficient to decompose $G(n,p)$. For constant edge probability $p$, Glock, K{\"u}hn, and Osthus~\cite{glock2016optimal} were even able to determine with high probability the exact minimum number of cycles and edges required to decompose a (quasi)random graph. On the other hand, the Erd\H{o}s-Gallai conjecture was first shown to hold for graphs with linear minimum degree again by Conlon, Fox and Sudakov \cite{conlon2014cycle}. Very recently, the asymptotically correct bound of $(\frac32+o(1)) n$ cycles and edges has been proved by Gir{\~a}o, Granet, K{\"u}hn, and Osthus~\cite{girao2021path} for large graphs with linear minimum degree.

A fundamental challenge towards establishing the Erd\H{o}s-Gallai conjecture is its generality, and indeed these previous results make progress only by imposing a fairly strong constraint on the structure or randomness of the graph. 
For almost 50 years, the best known bound in the general case of the Erd\H{o}s-Gallai conjecture (as observed by Erd\H{o}s and Gallai) came from a simple argument involving the iterative removal of a longest cycle, which shows that an $n$-vertex graph can always be decomposed into $O(n\log n)$ cycles and edges. In 2014, Fox, Conlon and Sudakov~\cite{conlon2014cycle} made the first major breakthrough on this problem, showing that such a decomposition with only $O(n\log\log n)$ cycles and edges always exists. Here we will give the following improvement on this bound, where $\log^\star n$ is the iterated logarithm function.

\begin{restatable}{theorem}{logstar}
\label{thm:logstar}
Any $n$-vertex graph can be decomposed into $O(n\log^\star n)$ cycles and edges.
\end{restatable}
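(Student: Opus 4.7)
The plan is to establish \Cref{thm:logstar} through an iterated reduction that, at each step, drops the maximum degree of the residual graph from $\Delta$ to about $\log\Delta$ at a cost of $O(n)$ cycles per step. Since the maximum degree is initially at most $n$, this reaches a bounded maximum degree after $\log^\star n + O(1)$ applications; a graph of bounded maximum degree has $O(n)$ edges, which can be added as singletons to the decomposition. Summing the per-step cost gives the desired $O(n \log^\star n)$ cycles and edges. The slow growth of $\log^\star$ is then a direct consequence of the recursion $\Delta \mapsto \log \Delta$.

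The main technical step is a key lemma of the form: any $n$-vertex graph $G$ of maximum degree at most $\Delta$ admits a collection of $O(n)$ cycles whose removal leaves a graph of maximum degree at most $C \log \Delta$, for some absolute constant $C$. The lemma is asymptotically tight in the relation between cycle count and maximum degree reduction: the graph has at most $n\Delta/2$ edges, and $\Omega(n\Delta)$ of them must be removed, so the $O(n)$ cycles must average length $\Omega(\Delta)$ and, crucially, cover almost all the edges at every high-degree vertex.

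To prove the key lemma, the strategy I would pursue is to exploit the theory of sublinear expanders introduced by Koml\'os and Szemer\'edi, a framework used extensively in recent work on embedding and cycle problems. The first phase is a sublinear expander decomposition of $G$, which contributes $O(n)$ cycles and edges at the outset and reduces the task to that of a single sublinear expander. Within a sublinear expander of maximum degree $\Delta$, the plan is to construct $O(n)$ long cycles, each of length $\Omega(\Delta)$, whose union leaves every vertex with residual degree at most $O(\log \Delta)$. Each individual cycle can be produced via a P\'osa-rotation argument inside the expander, which in this setting both yields long cycles and allows one to route them through a prescribed set of high-degree vertices.

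The main obstacle I anticipate is managing the interaction between successive cycle removals, each of which can degrade the sublinear expansion on which the next step relies. Two natural resolutions are (i) finding all $O(n)$ cycles in a single global construction, for instance via a matching in an auxiliary hypergraph whose hyperedges encode candidate cycles at each high-degree vertex, or (ii) establishing a robust form of sublinear expansion that survives removal of a bounded-degree subgraph, so that one can iterate the cycle search without redecomposing. Calibrating the construction so that the cycle count stays $O(n)$ while \emph{every} high-degree vertex is uniformly reduced to residual degree $O(\log \Delta)$ -- rather than just the average vertex -- is where I expect the bulk of the subtle technical work, and correspondingly the majority of the paper, to be concentrated.
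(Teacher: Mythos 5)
Your high-level scheme — iterate a reduction that trades $O(n)$ cycles for shrinking a degree-type parameter from $\Delta$ to roughly $\log\Delta$, so that $\log^\star n$ iterations suffice — matches the structure of the paper's argument. You also correctly anticipate that sublinear expansion (and in particular a robust form that survives edge deletion) is the right technology, and that the bulk of the difficulty lies in controlling the residual graph uniformly. That intuition is sound.

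However, your key lemma as stated is false, and the flaw is fundamental rather than cosmetic. You claim that any $n$-vertex graph of maximum degree at most $\Delta$ admits $O(n)$ cycles whose removal leaves maximum degree $O(\log\Delta)$. Take $G$ to be a disjoint union of stars each with $\Delta$ leaves: $G$ contains no cycles at all, yet its maximum degree is $\Delta \gg \log\Delta$. More generally, any tree-like or near-acyclic structure obstructs a maximum-degree reduction by cycle removal, and the observation you make yourself — that the cycles must ``cover almost all the edges at every high-degree vertex'' — is precisely what is unachievable when a high-degree vertex lies in few or no cycles. The correct quantity to iterate on is the \emph{average} degree, not the maximum degree, and the reduction step must be allowed to leave behind \emph{edges} as well as cycles: the paper's \Cref{lem:density} shows that an $n$-vertex graph of average degree $d$ decomposes into $O(n)$ cycles plus a leftover subgraph of average degree $O(\log^{274}d)$. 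Since a cycle-free graph automatically has average degree less than $2$, there is no star-type obstruction to reducing average degree.

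The mechanism also differs from what you describe. Rather than routing individual long cycles through prescribed high-degree vertices by P\'osa rotation, the paper first strips out all cycles of length at least $d$, then almost-decomposes the remainder into robust sublinear expanders $R_i$ with $\sum_i |R_i| \le 2n$, and argues (\Cref{lem:onelongcycle}) that the absence of long cycles forces each $R_i$ to have $O(d\log^4 d)$ vertices. Each small expander is then decomposed (\Cref{thm:decompexander-explicit}) by setting aside sparse ``connecting skeletons'' built via the Aharoni--Haxell hypergraph matching theorem and a sprinkling argument, applying Lov\'asz's path decomposition to the remainder, and closing the resulting paths into cycles through the skeleton. The cycles arise from connecting up an initial \emph{path} decomposition, not from a direct long-cycle construction, and the leftover edges of the decomposition are exactly the unused skeleton edges, whose count $n\log^{O(1)}d$ gives the average-degree drop. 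So while your reduction framework and your suspicion about robust sublinear expansion are both correct, the precise statement of the reduction lemma needs to be changed from maximum degree to average degree, and the per-iteration decomposition is built from a path decomposition plus connections rather than from rotated long cycles.
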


Key to the decompositions used by Conlon, Fox and Sudakov~\cite{conlon2014cycle} was to show that \emph{a)} graphs $H$ with certain expansion properties can be decomposed into $O(|H|)$ cycles and few edges and \emph{b)} any $n$-vertex graph $G$ can be decomposed nicely into such `expanders' $H$ and a small number of leftover edges. Combined, this gives a decomposition of $G$ into $O(n)$ cycles and some leftover edges, and it can be shown that iterating this on the leftover edges while removing any particularly long cycles causes the average degree of the leftover edges to drop significantly each time, so that after $\log\log n$ iterations the decomposition given by~\cite{conlon2014cycle} is achieved.

To prove \Cref{thm:logstar}, essentially, we need the average degree of the leftover edges to drop much faster, and so at \emph{b)} we have to take a much weaker condition on the `expanders' $H$. Effectively we replace the strong expansion used in \cite{conlon2014cycle}, with a very weak \emph{sublinear} expansion (as introduced by Koml\'os and Szemer\'edi~\cite{K-Sz-1,K-Sz-2}), in particular using a \emph{robust} sublinear expansion where sets expand sublinearly despite the additional removal of a possibly-superlinear set of edges (see \Cref{sec:generalexpansionchat} for a discussion of these forms of expansion and their background). 

Using this much weaker form of expansion introduces a raft of issues when we decompose an expander into few cycles and edges (for \emph{a)} above), resulting in a very different approach to that used in \cite{conlon2014cycle}. In order to do this, we introduce a range of new tools, which we hope will find further applications. In particular, we would highlight a new approach to robust sublinear expansion (see \Cref{sec:expansion}) and the (surprisingly difficult) result that randomly sampling the vertices of an expander is likely to induce a subgraph with a (somewhat weaker) expansion property (see \Cref{lem:expandintorandom2}). Additional new tools include a similar result but while randomly sampling edges, the (almost) decomposition of any graph into robust sublinear expanders, and the finding of a sparse `connective skeleton' in expanders
to connect vertex pairs with paths.
These, and other tools, and how they come together to prove Theorem~\ref{thm:logstar}, are discussed in \Cref{subsec:sketch}.

As discussed in \Cref{sec:generalexpansionchat}, sublinear expansion has been useful in many different settings in which our tools may also be useful (see \Cref{sec:final} for some examples). In particular, \emph{robust} sublinear expansion (specifically considering the deletion of superlinearly many edges) is a very recent concept and we hope our new perspective and tools will contribute to its development and use. 
Several of our intermediate results and tools might also ultimately prove useful towards proving the Erd\H{o}s-Gallai conjecture in full as they often decompose any $n$-vertex graph into $O(n)$ cycles and a graph with some other structure imposed.
This is discussed further in our concluding remarks in \Cref{sec:final}.


\section{Preliminaries}\label{sec:prelim}

After we introduce our notation, we give a detailed sketch of our methods before outlining the rest of the paper.

\subsection{Notation}
Given a graph $G$ we will denote by $V(G)$ and $E(G)$ its vertex and edge set, respectively. Given a vertex $v\in V(G)$, we denote its degree by $d_G(v)$ and the set of its neighbours by $N_G(v)$. We write $\Delta(G)$ for the maximum degree of a vertex in a graph $G$. Given a subset of vertices $U \subseteq V(G)$ we denote by $N_G(U)$ the set of vertices in $V(G) \setminus U$ which have a neighbour in $U$. 
Given $U \subseteq V(G)$ we define $B^{i}_G(U)$ as the set of vertices at distance at most $i$ from a vertex of $U$ in the graph $G$, i.e.\ the ball of radius $i$ around $U$ in $G$, and write simply $B_G(U)=B^1_G(U)$. Given $V \subseteq V(G)$ we write $G[V]$ for the subgraph of $G$ induced by the vertex set $V$, and write $G \setminus V$ for $G[V(G) \setminus V]$. Given $F \subseteq E(G)$ we write $G-F$ for the subgraph of $G$ obtained by deleting all the edges in $F$. Given multiple (hyper)graphs $H_1,\ldots, H_t$ we write $H_1 \cup \ldots \cup H_t$ for the (hyper)graph with vertex set $\bigcup_{i\in [t]}V(H_i)$ and edge set $\bigcup_{i\in [t]}E(H_i)$. Given vertices $v$ and $u$, by a $vu$-path/walk we refer to a path/walk joining $v$ and $u$. 

We write $X \sim \text{Bin}(n,p)$ to mean that $X$ is a random variable distributed according to the binomial distribution with parameters $n$ and $p$. We denote by $\G(n,p)$ the binomial random graph defined as the graph with vertex set $[n]$ in which we sample every edge with probability $p$ independently from all other edges. We write $G \sim \G(n,p)$ to mean that $G$ is sampled according to $\G(n,p).$

For each $n\geq 1$, $[n]=\{1,\ldots,n\}$. All our logarithms have base two. For each $k\geq 1$, let $\log^{[k]}(n)=\underbrace{\log \log \ldots \log}_{k \text{ times}} n$, and let $\log^{[0]}n=n$. 
The iterated logarithm function $\log^{\star} n$ is the minimum number of times we need to apply the logarithm function to $n$ until it becomes at most one, that is, the least $k\geq 0$ such that $\log^{[k]}n\leq 1$.

Throughout the paper, we make no attempt to optimise constants and logarithmic factors; often, we are wasteful to improve readability. With the same goal, we also omit floor and ceiling signs wherever they are not crucial.

\subsection{Proof sketch}\label{subsec:sketch}

Our methods to find an (edge) decomposition into cycles and edges is iterative, where a single iteration, applied to an $n$-vertex graph $G$ with average degree $d$, performs the following steps for some appropriately large constant $C$. 
\begin{itemize}
    \item Repeatedly remove any cycle of length at least $d$ and add it to the decomposition, giving at most $\frac n2$ new cycles.
    \item Decompose $G$ into edge disjoint subgraphs $R_i$ with a certain expansion property which, combined with the lack of long cycles, guarantees that $|R_i|= O(d \log^4 d)$. These subgraphs are almost vertex disjoint, so that $\sum_{i}|R_i|\leq 2n$.
    \item Decompose each of the subgraphs $R_i$ into cycles and edges, using in total $O(n)$ cycles and $O(n\log^Cd)$ edges.
\end{itemize}
This finds a decomposition of $G$ into $O(n)$ cycles and a subgraph consisting of leftover edges of average degree at most $O(\log^Cd)$. We now iterate by applying the same argument to our much sparser graph consisting of leftover edges. After at most $O(\log^\star d)$ iterations we will be left with a graph of constant average degree. We then simply make all its edges part of our decomposition, which together with the $O(n)$ cycles we found at each of the $O(\log^\star d)$ iterations gives our desired decomposition for \Cref{thm:logstar} -- in fact, using only $O(n\log^\star d)$ edges for any $n$-vertex graph with average degree $d$. 

The majority of our work lies in carrying out the final step in this iteration, where the key part of this step is to show the following intermediate result. 

\begin{restatable}{theorem}{decompcyclelinear}
\label{thm:decompexander} There exists $C>0$ such that any $r$-vertex graph decomposes into $O(r)$ cycles and $O(r\log^{C}r)$ edges.
\end{restatable}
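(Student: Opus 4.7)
The plan is to implement the three-step outline from \Cref{subsec:sketch}, now as a self-contained argument applied to a single graph. Let $G$ be the $r$-vertex input graph with average degree $d$. We may assume $d$ is at least a sufficiently large constant, since otherwise $|E(G)|=O(r)$ and we can declare every edge to be a leftover edge.

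First, I would greedily remove edge-disjoint cycles of length at least $d$ from $G$, placing each into the decomposition. Since each such cycle consumes at least $d$ edges and $|E(G)|\le rd/2$, at most $r/2$ cycles are produced in this phase. Let $G'$ denote what remains; by maximality, $G'$ contains no cycle of length at least $d$. Second, I would (almost) decompose $G'$ into vertex-disjoint subgraphs $R_1,\ldots,R_t$, each a \emph{robust} sublinear expander in the Koml\'os--Szemer\'edi sense, with $\sum_i |V(R_i)| \le 2r$. The absence of long cycles in $G'$, together with the sublinear diameter property of such expanders between large sets, should force each $R_i$ to have at most $O(d\log^4 d)$ vertices. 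This is exactly the ``almost decomposition of any graph into robust sublinear expanders'' flagged in the introduction as one of the new tools.

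Third, and this is the crux, I would show that each expander $R=R_i$ can be decomposed into $O(|V(R)|)$ cycles together with $O(|V(R)|\log^C d)$ leftover edges. The plan for a single expander is iterative: for a chosen edge $uv$ in the current leftover, grow balls $B^j_R(\{u\})$ and $B^j_R(\{v\})$ using sublinear expansion until they reach a suitable threshold size, then connect them through an auxiliary sparse ``connective skeleton'' subgraph, producing a short cycle through $uv$ of length roughly polylogarithmic in $d$. Remove this cycle and iterate. The \emph{robustness} of the sublinear expansion is precisely what allows the residual graph to keep expanding, and hence keep yielding such short cycles, after the cumulative removal of possibly superlinearly many edges. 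After $O(|V(R)|)$ such cycles essentially all edges of $R$ should be absorbed, up to a pool of $O(|V(R)|\log^C d)$ that we charge to the leftover. Summed over $i$, combined with the $O(r)$ cycles from the first phase, this gives $O(r)$ cycles and $O(r\log^C d) = O(r\log^C r)$ leftover edges, proving the theorem.

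The hardest part is the third step. Sublinear expansion is a far weaker guarantee than the strong expansion used by Conlon, Fox and Sudakov; in particular one cannot hope to find almost-spanning cycles, only cycles of polylogarithmic length, which makes the accounting much more delicate since very many short cycles must be orchestrated. Worse, each cycle removal erodes the expansion we are relying on, so we must use the robust variant of sublinear expansion (withstanding the deletion of many edges) and arrange that the connective skeleton survives throughout so that path-ends can be closed into cycles cheaply at every iteration. Controlling this interplay without letting the leftover-edge count blow up beyond $O(|V(R)|\log^C d)$ is the main obstacle.
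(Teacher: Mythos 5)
Your step 3 contains the critical gap. You propose to peel off one short cycle at a time: pick an edge $uv$ of the current leftover, grow balls around $u$ and $v$, and close them into a cycle of polylogarithmic length, then iterate, claiming that ``after $O(|V(R)|)$ such cycles essentially all edges of $R$ should be absorbed.'' This cannot work: each such cycle has only $\mathrm{polylog}(d)$ edges, so $O(|V(R)|)$ of them absorb only $O(|V(R)|\,\mathrm{polylog}(d))$ edges, while a robust sublinear expander $R$ may have as many as $\Theta(|V(R)|^2)$ edges (e.g.\ the complete graph is such an expander). Your iterative scheme would therefore generate $\Omega(|E(R)|/\mathrm{polylog})$ cycles, which can be $\Omega(|V(R)|^2/\mathrm{polylog})$ --- far more than the $O(|V(R)|)$ allowed. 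There is no way to make a ``short-cycle-at-a-time'' argument produce only linearly many cycles in a dense graph.

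The paper avoids this in a structurally different way, and this is the heart of \Cref{lem:decompexander}: one first sets aside sparse connecting skeletons $H_1,H_2,H_3$ (each with $O(n\,\mathrm{polylog}\,n)$ edges and path-connected through a random set $V_i$, built from \Cref{thm:pathconnect} via \Cref{lem:sparseconnect}), then decomposes \emph{all remaining edges} of $G$ into only $O(n)$ edge-disjoint \emph{paths} in one shot using Lov\'asz's theorem (in the strengthened form of \Cref{cor:lovasz}, so that endvertices are well spread). These $O(n)$ paths can be long, so they carry all the density of $G$. Finally the $O(n)$ paths are closed into $O(n)$ cycles simultaneously by routing each path's endpoints through the matching skeleton, and the only leftover edges are the unused skeleton edges. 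The path-connectedness machinery (\Cref{lem:connectmanypairs}, the Aharoni--Haxell matching theorem, the expander edge-partition of \Cref{lem:partitionedgesintoexpanders}, and the sprinkling argument behind \Cref{lem:expandintorandom}) exists precisely to make this simultaneous closure step possible with short connectors. The shift from ``one short cycle per edge'' to ``one closure per Lov\'asz path'' is exactly what brings the cycle count down from $|E|$ to $|V|$, and is what your proposal is missing.

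A secondary, non-fatal point: your steps 1 and 2 (remove long cycles, then use \Cref{lem:onelongcycle} to bound expander sizes by $O(d\log^4 d)$) are not part of the paper's proof of this theorem at all. They belong to the outer iteration (\Cref{lem:density} and the proof of \Cref{thm:logstar}), where they make the \emph{average degree} of the leftover drop. For \Cref{thm:decompexander} itself, the paper simply decomposes the graph into (possibly large) $(\eps,s)$-expanders via \Cref{splitting-into-expanders} and applies \Cref{lem:decompexander} to each, with all bounds stated in terms of $r$; no long-cycle removal is needed. Conflating these levels does not by itself break your argument, but it suggests the accounting was worked out from the proof-sketch rather than by tracking what is actually needed at each step.
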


Note that this intermediate result is applied to graphs $R_i$, each of which has order at most $O(d \log^4 d)$, so that, as $\sum_{i}|R_i|\leq 2n$, we get in total $O(n)$ cycles and $O(\sum_{i}|R_i|\log^C|R_i|)=O(\sum_{i}|R_i|\log^Cd)=O(n\log^Cd)$ edges in total  from the decomposition in the final step of the iteration.

Though we use a slightly modified iteration argument, till now our approach has the same structure as the one taken by Conlon, Fox and Sudakov in \cite{conlon2014cycle}, where they prove a weaker version of \Cref{thm:decompexander} in which they allow, instead of $O(r\log^C r)$, up to $O(r^{2-1/10})$ edges in the decomposition. This much weaker bound leads them to iterate $O(\log\log d)$ times and thus use a decomposition using $O(n\log\log d)$ cycles and edges in total. The key difference is that we are able to replace the extremely strong expansion properties used in~\cite{conlon2014cycle} with a very weak form of expansion. To make this change successfully, we need to carefully develop the weak expansion property we use (which originates with Koml\'os and Szemer\'edi~\cite{K-Sz-1,K-Sz-2}) as well as solve the variety of problems caused by working with this weak expansion. For this development, the main insight is the new perspective we bring to robust sublinear expansion. That we can decompose an arbitrary graph into edges and expanders follows relatively naturally from the definition of robust sublinear expansion (see \Cref{sec:decomp-into-expanders} for more details on the expander partitioning lemma), and the same decomposition result allows us to carry out the second step of the iteration mentioned above. This leaves the difficult task of decomposing an expander graph into few cycles and edges, which we now discuss. 

\smallskip

\noindent\textbf{Decomposing expanders into few cycles and edges.}
For some constant $C>0$, we now assume that we wish to decompose an $n$-vertex graph $G$ with the following (slightly simplified) expansion condition (see \Cref{sec:expansion} for the full condition we use): for each $U\subseteq V(G)$ and $F\subseteq E(G)$ with $|U|\leq \frac23 n$ and $|F|\leq |U|\log^Cn$, we have
\begin{equation}\label{expandprop}
|N_{G-F}(U)|\geq \frac{1}{\log^2n}|U|.
\end{equation}

As the size of the neighbourhood guaranteed in \eqref{expandprop} is smaller than $|U|$, this type of expansion is known as \emph{sublinear expansion}. 

Roughly speaking, our main strategy is to set aside a `sparse connecting skeleton' $H\subseteq G$, before initially decomposing the edges of $G-H$ into $O(n)$ paths and cycles using a result of Lov\'asz~\cite{lovasz1968covering} stated in the introduction. We then use short paths from the connecting skeleton $H$ to connect up each initial path into a cycle, before simply taking each unused edge of $H$ as part of our decomposition. In order for this to produce a correct decomposition we need $H$ to be very sparse, in particular with at most $n\log^{O(1)}n$ edges.

To aid the connection of the paths from the initial path/cycle decomposition, we need that each vertex does not appear too often as an endvertex of these paths. This we ensure by proving a simple, but crucial, corollary of Lov\'asz's result (see \Cref{cor:lovasz}), which will allow us to decompose $G-H$ into a collection $\mathcal{P}$ of $O(n)$ paths in which each vertex appears as an endvertex at most twice, so that the endvertices of the paths are well spread across the graph.

More problematically, note that in order to get an actual cycle we need to connect the endvertices of each path $P\in \mathcal{P}$ using a path in $H$ which is internally vertex disjoint from $P$. To deal with this, we change this outline slightly as follows. We partition $V(G)=V_1\cup V_2\cup V_3$ by placing each vertex independently into a set $V_i$ uniformly at random, and show that $G$ contains sparse subgraphs $H_1,H_2,H_3$ (each with $n\log^{O(1)} n$ edges) with the following property for each $i\in [3]$, where a \emph{path through $V_i$} is one whose interior vertices are all in $V_i$. We place no restriction on the endvertices themselves, so in particular a single edge path is a path through any set since it contains no interior vertices.
\begin{enumerate}[label = \textbf{P}]
\item For any set $\mathcal{P}\subseteq \binom{V(G)}{2}$ such that each vertex appears in at most $2$ pairs in $\mathcal{P}$, there are edge disjoint paths $P_{xy}$, $\{x,y\}\in \mathcal{P}$, such that, for each $\{x,y\}\in \mathcal{P}$, $P_{xy}$ is an $xy$-path through $V_i$ in $H_i$ with length $O(\log^7n)$.\label{prop:sketch}
\end{enumerate}

We then split the edges of $G-H_1-H_2-H_3$ into three subgraphs $G_1,G_2,G_3$ in such a way that $V(G_i)= V_{i+1} \cup V_{i+2},$ for each $i\in [3]$, with indices taken modulo $3$. Applying our path decomposition corollary to each $G_i$ then gives a decomposition of all the edges outside of $H_1 \cup H_2 \cup H_3$ into paths whose endvertices are well spread across the graph. Note that the paths decomposing $G_i$ completely avoid $V_i$ so by using the property~\ref{prop:sketch} we can connect each of these paths into actual cycles using edges of $H_i$. In total we find $O(n)$ edge disjoint cycles which use all the edges of $G-H_1-H_2-H_3$ so the total number of uncovered edges, which all belong to $H_1\cup H_2\cup H_3$, is small. 

With such a weak expansion property as that at \eqref{expandprop}, whether we can do this is initially far from clear. Building up to this, we ask the following three questions. 

\vspace{-0.2cm}

\begin{enumerate}[label = \roman{enumi})]
    \item  Can we connect pairs of vertices with edge disjoint paths using the whole of $G$? \\I.e., does property \ref{prop:sketch} hold if $H_i=G$ and $V_i=V(G)$?
    \item If so, can we do this using only a random subset of vertices $V_i$ for the interior vertices of the paths? \\I.e., does property \ref{prop:sketch} hold if $H_i=G$?
    \item If so, can we do this using only a sparse subgraph $H_i$ of $G$? \\I.e., can we find a sparse $H_i \subseteq G$ so that property \ref{prop:sketch} holds for $H_i$?
\end{enumerate}

\vspace{-0.2cm}

\smallskip \textbf{i) Finding edge disjoint paths in $G$.} The expansion condition on $G$ at \eqref{expandprop} is sufficient to imply that any pair of vertices in $H$ are connected by a path of length $O(\log^3n)$ by expanding the neighbourhoods around each of the two vertices until they become large enough that they must overlap. Moreover, the robustness of our condition at \eqref{expandprop} (i.e., that this expansion can avoid using an arbitrary, but not too large, set of edges $F$) allows us, with only a bit more work, to show that, for any collection $\mathcal{P}$ of pairs of vertices as in property \ref{prop:sketch}, we could find at least $\Theta(\log^4n)\cdot |\mathcal{P}|$ edge disjoint paths in $G$ which each connect \emph{some} vertex pair in $\mathcal{P}$ and have length $O(\log^3n)$. As this holds in fact for any subset $\mathcal{P}'\subseteq \mathcal{P}$ in place of $\mathcal{P}$, this allows us to use the Aharoni-Haxell hypergraph matching theorem (see \Cref{thm:hyperhall}) to select, for each pair $\{x,y\}\in \mathcal{P}$, an $xy$-path in $G$, so that all these paths are edge disjoint. In total this allows us to answer question i) positively.

\textbf{ii) Connecting through the random vertex subset $V_i$.}  With $i\in [3]$ and $V_i$ a random subset of $V$ with size approximately $\frac{n}3$ as chosen above, unfortunately it seems it does not follow easily that $H[V_i]$ likely satisfies a similar expansion property to~\eqref{expandprop}. Indeed, firstly, if the inequality \eqref{expandprop} is tight for a set $U$, then with probability $\exp(-\Theta(|U|/\log^2n))$ we have that $U$ has no neighbours selected into our random subset $V_i$, too high a probability to naively take a union bound to avoid this event over all sets $U$ with any fixed size. Secondly, the robustness condition we need in order to find multiple edge disjoint paths (to then apply the Aharoni-Haxell hypergraph matching theorem) requires us to avoid an arbitrary set of $|U|\log^{O(1)}n$ edges, and again there are too many choices to just take a union bound.

The first problem here is the most difficult to overcome. The second problem can be overcome by splitting the edges of $G$ randomly into $t$ subgraphs $G_i$, $i\in [t]$, for some $t=\log^{O(1)}n$, and showing then that (with high probability) each of these has some (slightly weaker) expansion property (see \Cref{lem:partitionedgesintoexpanders}). When we look for edge disjoint collections of paths in each graph $G_i$ separately for the application of the Aharoni-Haxell hypergraph matching theorem,  by finding $\frac 1t$ fraction of the required paths in each graph $G_i$ we need to avoid fewer edges by expanding in each $G_i$ separately rather than $G$. Thus, we have fewer sets of edges over which to take a union bound, solving the second problem above. The splitting into subgraphs is done in Section~\ref{sec:edge-partition-of-expanders} after a brief discussion of the parameters (see also the discussion at the start of Section~\ref{sec:connectinexpander}), but effectively this approach works because the splitting is very efficient: we do not need to reduce the lower bound in \eqref{expandprop} by more than a factor of 4 despite splitting into polylogarithmically many subgraphs (the upper bound on $|F|$ for which this holds, though, will decrease quite a bit more).

Solving the first problem to get some weak expansion into $V_i$ is the crux of this paper, and is where our new perspective on robust sublinear expansion is critical. We will show that it follows from this new perspective that (as stated more precisely in \Cref{prop:well-expanding-core}), for such sets $U$ and $F$, we have either

\begin{enumerate}
    \item $N_{G-F}(U)$ is actually much larger than guaranteed by \eqref{expandprop}, or
    \item  there is a set $U'\subseteq U$ which is much smaller than $U$ but whose neighbourhood alone contains at least (essentially) $\frac{|U|}{3\log^2n}$ vertices in $N_{G-F}(U)$.
\end{enumerate}
Given this, a natural approach is to take a union bound over all `well-expanding' sets $U$, meaning that they fall under condition 1.\ above, to guarantee a constant fraction of their neighbourhood gets sampled into our random set $V_i$. Since the subset $U'$ from condition 2.\ is well-expanding this will guarantee us that it expands inside $V_i$.
We would now like to use the fact that $U'$ expands inside $V_i$ to conclude the same happens for the original set $U$ from condition 2.\ above containing $U'$. However, a major issue here is that in order to achieve this we would need a bound on $|(N_{G-F}(U')\setminus U)\cap V_i|$ to get the expansion for \emph{any} relevant set $U$ for which we use the well-expanding subset $U'$ -- having to bound a random variable depending on $U'$ and $U$ spoils our union bound approach over the smaller sets $U'$.
For the first expansion of $U$ into $V_i$ this is avoidable (by only considering such sets disjoint from $V_i$), but we need to expand multiple times to reach most of the vertices in $V_i$ and, after the first expansion, avoiding expanding sets that contain vertices in $V_i$ is unavoidable.

To get around this, when we identify the well-expanding set $U'$, we look at its successive neighbourhoods in $G-F$ and show that enough of these vertices are chosen to be in $V_i$ together with all the vertices along a path going back to $U'$ so that $U'$ expands via such short paths to reach more than half of the vertices of $V_i$. In an early draft of this work, this was shown by carefully analysing an intricate random process. Fortunately, however, we will instead give here a much easier proof by combining our new perspective on robust sublinear expansion with an adaptation of a clever application of the sprinkling method appearing in a very recent work of Tomon~\cite{tomon2022robust}. 
We defer a more detailed sketch for this part of the argument to \Cref{sec:connectinexpander}, in particular until after we have introduced in full our new perspective on robust sublinear expansion, which remains crucial for this new approach.

In total, though, this will allow us to answer question ii) positively. I.e., property~\ref{prop:sketch} holds if we are allowed to use all the edges of $G$ to make connections through $V_i$. 
Let us also stress an important point, which already played a role at various points in the above arguments and that is that the paths $P_{xy}$ we find will always be short, namely of length $\log^{O(1)} n$. This again plays an important role in answering the next question, namely finding an appropriate sparse subgraph $H_i\subseteq G$ with the same property, which we turn to next.

\textbf{iii) Finding sparse connecting skeletons.} Before we look for a subgraph $H_i\subseteq G$ with the property \ref{prop:sketch} and $n\log^{O(1)}n$ edges, can we even find any graph with these properties? A binomial random graph is a natural candidate for such a graph, and, indeed, if $H$ is a binomial random graph with vertex set $V(G)$ and edge probability $p=\omega\left(\frac{\log n}n\right)$ then it will have, with high probability, the property~\ref{prop:sketch} if we replace $H_i$ with $H$ and choose $V_i$ to be any fixed set of linear size. (We prove this as \Cref{lem:template}, with a larger than optimal value of $p$ for simplicity.) 
We then use $H$ as a \emph{template} to construct the sparse expanding skeleton $H_i$. We first sample our large random subset of vertices $V_i$, and use our answer to question ii) to guarantee that  property~\ref{prop:sketch} holds with high probability in a slightly stronger form where every vertex is allowed to appear in $O(\log^5n)$ pairs in $\mathcal{P}$. In particular, we will use it with $\mathcal{P}$ being the set of pairs of vertices making an edge of our template graph $H$, which we choose to be sparse and well-connected through $V_i$, as well as have maximum degree $O(\log^5n)$. For each edge $xy\in E(H)$, we find an $xy$-path $P_{xy}$ through $V_i$ with length $\log^{O(1)}n$ in $G$ so that all these paths are edge disjoint. We then let $H_i$ be the union of all these paths, noting that, as $H$ is sparse and the paths $P_{xy}$ are relatively short, $H_i$ is also relatively sparse. Then, given an arbitrary collection $\mathcal{P}$ of pairs to connect, we first find edge disjoint paths connecting them through $V_i$ in $H$, before replacing each edge $xy$ on one of these paths in $H$ with the corresponding path $P_{xy}$ through $V_i$. This creates a set of edge disjoint $xy$-\emph{walks} through $V_i$ in $H_i$ --- as each such walk contains an $xy$-path, we can find the paths required by property~\ref{prop:sketch}. Note that, when we do this for each $i\in [3]$, we need to ensure that the graphs $H_i$ we find are edge disjoint, but this is easy to do by reusing some of our previous work, splitting $G$ into a union of edge disjoint expanders $G_1,G_2,G_3$, before finding each subgraph $H_i$ in the respective subgraph $G_i$.

\subsection{Organisation of the paper}\label{sec:organisation}
In the rest of this section we will introduce some general preliminary results, including some concentration results in \Cref{sec:conc} and the Aharoni-Haxell hypergraph matching theorem in \Cref{subsec:AH-matching-thm}, before showing strongly expanding graphs (namely $\G(n,p)$) satisfy a certain strong connectivity property in \Cref{sec:template}. In \Cref{sec:expansion}, we introduce robust sublinear expansion and prove a number of useful properties of this type of expansion. 
In \Cref{sec:connectinexpander}, we establish that our weaker expansion implies a similar (though weaker) connectivity property as that used in \Cref{sec:template}. 
In \Cref{sec:logstarproof}, we use the machinery we developed to prove our main result, \Cref{thm:logstar}. Finally, in \Cref{sec:final}, we make some concluding remarks.

\subsection{Concentration inequalities}\label{sec:conc}
We will often use a basic version of Chernoff's inequality for the binomial random variable (see, for example, \cite{alon-spencer}).

\begin{theorem}[Chernoff's bound]\label{chernoff}
Let $n$ be an integer and $0\le \delta,p \le 1$. If $X \sim \bin(n,p),$ then, setting $\mu=\mathbb{E} X = np,$ we have
$$\P(X>(1+\delta) \mu) \le e^{-\delta^2\mu/3},\quad\quad\quad \text{ and }\quad\quad\quad \P(X<(1-\delta) \mu) \le e^{-\delta^2\mu/2}.$$
\end{theorem}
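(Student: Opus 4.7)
The plan is to use the standard moment generating function (MGF) approach, often called the Chernoff trick. I would write $X = \sum_{i=1}^n X_i$ for independent Bernoulli$(p)$ indicators, so that $\mu = \E X = np$. For any $t > 0$, Markov's inequality applied to the nonnegative random variable $e^{tX}$ gives $\P(X > (1+\delta)\mu) \le e^{-t(1+\delta)\mu}\E[e^{tX}]$. By independence of the $X_i$ and the inequality $1+x \le e^x$, the MGF satisfies
\[
\E[e^{tX}] = \prod_{i=1}^n \E[e^{tX_i}] = \bigl(1 + p(e^t-1)\bigr)^n \le e^{\mu(e^t-1)}.
\]
Combining these two bounds and choosing the optimiser $t = \ln(1+\delta)$ yields
\[
\P(X > (1+\delta)\mu) \le \exp\bigl(\mu(\delta - (1+\delta)\ln(1+\delta))\bigr).
\]

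The upper-tail inequality then reduces to the elementary one-variable estimate $\delta - (1+\delta)\ln(1+\delta) \le -\delta^2/3$ on $[0,1]$, which I would verify by setting $f(\delta) = (1+\delta)\ln(1+\delta) - \delta - \delta^2/3$ and checking that $f(0)=f'(0)=0$ together with $f''(\delta) = 1/(1+\delta) - 2/3 \ge 0$ on $[0,1]$. The lower-tail argument is entirely symmetric: applying Markov to $e^{-tX}$ for $t > 0$, using the same MGF bound, and optimising at $t = -\ln(1-\delta)$ gives $\P(X < (1-\delta)\mu) \le \exp(\mu(-\delta - (1-\delta)\ln(1-\delta)))$, so it suffices to verify the companion inequality $-\delta - (1-\delta)\ln(1-\delta) \le -\delta^2/2$ on $[0,1]$, handled in the same calculus-based way.

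There is no genuine obstacle in this proof: the only non-mechanical step is verifying the two elementary inequalities for the function $(1\pm\delta)\ln(1\pm\delta)$, and it is precisely the different local curvature of these two functions near $\delta=0$ that produces the slightly different constants $1/3$ and $1/2$ in the two tails. Since the argument is entirely classical and self-contained, it is usually (as here) simply invoked with a reference rather than proved in detail.
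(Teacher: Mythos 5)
The paper does not prove this statement; it cites it directly to Alon--Spencer \cite{alon-spencer} as a standard fact, which is the usual treatment. Your proposal is the canonical MGF/exponential-Markov argument, so in spirit you are reconstructing exactly the reference proof.

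There is, however, a small but genuine slip in your verification of the upper-tail inequality. You set $f(\delta)=(1+\delta)\ln(1+\delta)-\delta-\delta^2/3$ and assert that $f''(\delta)=\tfrac{1}{1+\delta}-\tfrac{2}{3}\ge 0$ on $[0,1]$. This is false on $(\tfrac12,1]$: for instance $f''(1)=\tfrac12-\tfrac23<0$. The target inequality $f\ge 0$ on $[0,1]$ is nonetheless true, but convexity of $f$ on the whole interval is not the reason. A correct finish is: $f'(\delta)=\ln(1+\delta)-\tfrac{2}{3}\delta$ satisfies $f'(0)=0$, increases on $[0,\tfrac12]$ (where $f''\ge 0$) to $f'(\tfrac12)=\ln\tfrac32-\tfrac13>0$, and then decreases to $f'(1)=\ln 2-\tfrac23>0$; so $f'\ge 0$ throughout, whence $f$ is nondecreasing and $f\ge f(0)=0$. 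Alternatively, one can prove the stronger bound $\P(X>(1+\delta)\mu)\le e^{-\delta^2\mu/(2+\delta)}$ by a genuinely convex comparison and note $2+\delta\le 3$. Your lower-tail calculus is fine, since there $g''(\delta)=\tfrac{1}{1-\delta}-1\ge 0$ does hold on all of $[0,1)$.
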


We will also make use of the following well-known martingale concentration result (see Chapter 7 of \cite{alon-spencer}).

\begin{lemma}\label{lem:mcd}
Suppose that $X:\prod_{i=1}^N\Omega_i\to \R$ is $k$-Lipschitz. Then, for each $t>0$,
\[
\mathbb{P}(|X-\mathbb{E} X|>t)\leq 2\exp\left(\frac{-2t^2}{k^2N}\right).
\]
\end{lemma}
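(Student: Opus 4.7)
The plan is to prove this bounded-differences inequality by the standard martingale method, working with the Doob martingale associated to $X$. Assuming (as implicit in the product-space setup) that the coordinates $\omega_i$ are independent, I would define $X_i := \mathbb{E}[X \mid \omega_1, \ldots, \omega_i]$ for $0 \leq i \leq N$, so that $X_0 = \mathbb{E} X$, $X_N = X$, and the increments $D_i := X_i - X_{i-1}$ form a martingale difference sequence with respect to the natural filtration. The task then reduces to bounding the moment generating function of $X - \mathbb{E} X = \sum_{i=1}^N D_i$.

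First I would establish a \emph{conditional range} bound on each $D_i$. Fixing $\omega_1, \ldots, \omega_{i-1}$, the $k$-Lipschitz assumption together with independence of the coordinates gives
\[
\sup_{\omega_i} \mathbb{E}[X \mid \omega_1,\ldots,\omega_i] - \inf_{\omega_i} \mathbb{E}[X \mid \omega_1,\ldots,\omega_i] \leq k,
\]
because swapping the value of $\omega_i$ changes $X$ pointwise by at most $k$ and hence changes the conditional expectation over the remaining $\omega_{i+1}, \ldots, \omega_N$ by at most $k$. Thus, conditional on the past, $D_i$ has mean zero and lies in an interval of length at most $k$. Applying Hoeffding's lemma conditionally then yields $\mathbb{E}[e^{\lambda D_i} \mid \omega_1,\ldots,\omega_{i-1}] \leq \exp(\lambda^2 k^2/8)$, and iterating this bound $N$ times via the tower property produces the sub-Gaussian estimate $\mathbb{E}[e^{\lambda (X - \mathbb{E} X)}] \leq \exp(\lambda^2 N k^2/8)$.

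The remainder is a routine Chernoff-style optimisation: Markov's inequality applied to $e^{\lambda(X - \mathbb{E} X)}$ gives $\P(X - \mathbb{E} X > t) \leq \exp(-\lambda t + \lambda^2 N k^2/8)$ for every $\lambda > 0$, which is minimised at $\lambda = 4t/(Nk^2)$ and evaluates to $\exp(-2t^2/(Nk^2))$. Running the same argument with $-X$ in place of $X$ (also $k$-Lipschitz) and taking a union bound gives the two-sided conclusion of the lemma.

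The only subtle point, and the one I expect to require care, is securing the claimed constant $2$ in the exponent. A naive route using only the pointwise estimate $|D_i| \leq k$ together with the usual Azuma inequality would yield the weaker bound $2\exp(-t^2/(2Nk^2))$, off by a factor of $4$ in the exponent. The improvement hinges on exploiting the full conditional range bound $b_i - a_i \leq k$ rather than just $|D_i| \leq k$, coupled with the sharp form of Hoeffding's lemma giving $\exp(\lambda^2 k^2/8)$ instead of $\exp(\lambda^2 k^2/2)$; this is precisely what distinguishes McDiarmid's bounded differences inequality from the coarser Azuma bound.
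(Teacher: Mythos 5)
The paper does not prove this lemma itself; it simply cites it as a well-known martingale concentration result, pointing to Chapter~7 of Alon--Spencer. Your proof is a correct, self-contained derivation by exactly the standard route one would find there: the Doob martingale $X_i=\mathbb{E}[X\mid \omega_1,\ldots,\omega_i]$, the conditional range bound $b_i-a_i\le k$ from Lipschitzness plus coordinate independence, Hoeffding's lemma applied conditionally to get $\mathbb{E}[e^{\lambda D_i}\mid\mathcal{F}_{i-1}]\le e^{\lambda^2k^2/8}$, the tower-property telescoping, and the Chernoff optimisation at $\lambda=4t/(Nk^2)$. You also correctly flag the one real subtlety: to obtain the stated constant $2$ in the exponent one must use the range form of Hoeffding's lemma rather than the cruder pointwise bound $|D_i|\le k$, which would only give $\exp(-t^2/(2Nk^2))$. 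The argument is complete and matches the intent of the cited reference.
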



\subsection{The Aharoni-Haxell hypergraph matching theorem and edge disjoint paths}\label{subsec:AH-matching-thm}
We will use the following hypergraph version of Hall's theorem due to Aharoni and Haxell, which is an immediate consequence of Corollary~1.2 in \cite{aharoni2000hall} (noting that we can add new, unique, vertices to each edge in the theorem to make the hypergraphs $\ell$-uniform). A \emph{matching} in a hypergraph is a collection of pairwise vertex disjoint edges.

\begin{theorem}\label{thm:hyperhall}
Let $r\in \N$, and let $H_1,\ldots,H_r$ be a collection of hypergraphs with at most $\ell$ vertices in each edge. Suppose that, for each $I\subseteq [r]$, there is a matching in $\bigcup_{i\in I}H_i$ containing more than $\ell(|I|-1)$ edges. Then, there is an injective function $f:[r]\to \bigcup_{i\in [r]}E(H_i)$ such that $f(i)\in E(H_i)$ for each $i\in [r]$ and $\{f(i):i\in [r]\}$ is a matching of $r$ edges.
\end{theorem}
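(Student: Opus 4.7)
The plan is to follow the topological approach pioneered by Aharoni and Haxell, translating the hypergraph statement into one about simplicial complexes. For each $i$, let $\mathcal{M}(H_i)$ denote the \emph{matching complex} of $H_i$, whose vertices are the edges of $H_i$ and whose simplices are the matchings of $H_i$; we are then asking for a system of distinct representatives from $\mathcal{M}(H_1),\ldots,\mathcal{M}(H_r)$ that together form a single simplex of $\mathcal{M}(\bigcup_i H_i)$. As a preprocessing step, I would reduce to the $\ell$-uniform case by padding every edge of size less than $\ell$ with fresh private vertices, an operation that preserves matching sizes and leaves both the hypothesis and the conclusion unchanged.

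The key tool I would invoke is a topological Hall theorem of Aharoni--Berger--Ziv type: given simplicial complexes $\mathcal{K}_1,\ldots,\mathcal{K}_r$ on a common ground set such that $\bigcup_{i\in I}\mathcal{K}_i$ is $(|I|-2)$-connected for every nonempty $I\subseteq [r]$, there exists a simplex whose vertices form a system of distinct representatives for the $\mathcal{K}_i$. With $\mathcal{K}_i=\mathcal{M}(H_i)$ this reduces the problem to verifying the following structural lemma: if a hypergraph $H$ with edges of size at most $\ell$ contains a matching of size strictly greater than $\ell k$, then $\mathcal{M}(H)$ is $(k-1)$-connected. Applied to $H=\bigcup_{i\in I}H_i$, the hypothesis of the theorem supplies a matching of size $>\ell(|I|-1)$, yielding $(|I|-2)$-connectivity and thereby the desired system of distinct representatives.

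The main obstacle is this connectivity lemma. The natural plan is induction on $k$: pick an edge of a large matching in $H$, cover $\mathcal{M}(H)$ by natural subcomplexes indexed by its vertices (roughly, matchings avoiding a chosen vertex of that edge, together with the contribution coming from the edge itself), observe that each nontrivial intersection is the matching complex of a slightly smaller hypergraph that still satisfies the inductive hypothesis, and then invoke the nerve theorem to promote the resulting $(k-2)$-connectivities into $(k-1)$-connectivity of $\mathcal{M}(H)$. An alternative route that bypasses topology entirely is Haxell's direct augmenting argument: take a partial SDR of maximum size, assume some $H_j$ cannot be extended, and iteratively rotate edges of the current matching against edges of $H_j$ until the defect condition $>\ell(|I|-1)$ for the relevant $I\subseteq [r]$ yields a contradiction with the choice of a maximum partial SDR.
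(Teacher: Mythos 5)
The paper does not prove this theorem: it is quoted as Corollary~1.2 of Aharoni and Haxell's ``Hall's theorem for hypergraphs'' (2000), and the only comment the authors add is the padding-to-uniformity observation, which you also make. So any genuine proof necessarily goes beyond what the paper records. Your plan is, in substance, a reconstruction of the Aharoni--Haxell topological argument, and the framework is the right one: reduce to $\ell$-uniform hypergraphs, pass to matching complexes, invoke a topological Hall (colorful simplex) theorem, and reduce everything to a connectivity estimate for the matching complex of a hypergraph with a large matching.

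Two things keep this from being a proof. The smaller issue is the statement of the topological Hall theorem: the object that needs to be $(|I|-2)$-connected is the induced subcomplex of the big matching complex on the vertex set $\bigcup_{i\in I}E(H_i)$, which equals $\mathcal{M}\bigl(\bigcup_{i\in I}H_i\bigr)$, not the union $\bigcup_{i\in I}\mathcal{M}(H_i)$ of the individual matching complexes. The latter is a strictly smaller complex in general, since it forbids matchings that mix edges from different $H_i$'s, and $(|I|-2)$-connectivity of it is not what your connectivity lemma supplies. In your actual application you compute with the correct object, so this is a wording slip rather than a fatal error, but the statement as written is wrong. The substantive gap is that essentially all the technical content of the theorem is concentrated in the connectivity lemma (a matching of size $>\ell k$ forces $\mathcal{M}(H)$ to be $(k-1)$-connected), and you leave this as a one-sentence plan. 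The nerve-theorem induction you sketch requires exhibiting a suitable cover of $\mathcal{M}(H)$, verifying the connectivity of the non-trivial intersections (which do not obviously satisfy the inductive hypothesis with the right parameters), and also controlling the nerve itself; none of this is routine, and in Aharoni--Haxell it takes several pages via a deletion lemma for a topological parameter $\eta$, with later treatments (Meshulam) using a careful Mayer--Vietoris argument. Mentioning Haxell's purely combinatorial alternating-path proof as an alternative does not close the gap either, since that argument is only named, not carried out. In short, you have correctly identified the skeleton of the Aharoni--Haxell proof, but the heart of it is deferred.
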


We will use \Cref{thm:hyperhall} to find edge disjoint paths between vertex pairs, to show that a graph is well-connected under the following definition, recalling that a path \emph{through} $V$ is a path with all its internal vertices in $V$. 

\begin{defn}\label{defn:path-connected}
A graph $G$ is $(\ell,t)$-\emph{path connected} through a vertex subset $V \subseteq V(G)$ if, for any $\mathcal{P} \subseteq \binom{V(G)}{2}$ in which every vertex appears in at most $t$ pairs in $\mathcal{P}$, there are edge disjoint paths $P_{\{x,y\}}$, $\{x,y\}\in \mathcal{P}$, such that, for each $\{x,y\}\in \mathcal{P}$, $P_{\{x,y\}}$ is an $xy$-path through $V$ with length at most $\ell$.
\end{defn}

We denote by $\binom{V(G)}{2}$ the multiset of pairs of distinct vertices of $G$, so in particular the same pair may appear multiple times in the collection $\mathcal{P}$.
Typically, $t$ will be a small constant and $\ell$ will be at most polylogarithmic in the number of vertices.

Given a graph $G$, a vertex set $V\subseteq V(G)$ and a collection $\mathcal{P}\subseteq \binom{V(G)}{2}$, we translate the pair connectivity property into a hypergraph matching problem as follows. For some $\ell\in \N$, and each $\{x,y\}\in \mathcal{P}$, let $H_{\{x,y\}}$ be the hypergraph with vertex set $E(G)$ and add as an edge the set $E(P)$ for each $xy$-path $P$ in $G$ with interior vertices in $V$ and length at most $\ell$. If there is an injective function $f:\mathcal{P}\to \bigcup_{\{x,y\}\in \mathcal{P}}E(H_{\{x,y\}})$ such that $f(\{x,y\})\in E(H_{\{x,y\}})$ for each $\{x,y\}\in \mathcal{P}$ and $\{f(\{x,y\}):\{x,y\}\in \mathcal{P}\}$ is a matching, then, for each $\{x,y\}\in \mathcal{P}$, let $P_{\{x,y\}}$ be the path in $G$ with edge set $f(\{x,y\})$. By the definition of $H_{\{x,y\}}$, each path $P_{\{x,y\}}$ is an $xy$-path in $G$ with length at most $\ell$ and interior vertices in $V$, and, as $\{f(\{x,y\}):\{x,y\}\in \mathcal{P}\}$ is a matching, these paths are all edge disjoint. Therefore, in order to prove that $G$ is $(\ell,t)$-{path connected} through $V$, it suffices to take a general such collection $\mathcal{P}$, define the relevant hypergraphs $H_{\{x,y\}}$, and prove that the associated condition holds for an application of \Cref{thm:hyperhall}.


\subsection{Existence of well-connected sparse graphs}\label{sec:template}
Random graphs typically present a natural candidate for a sparse, well-connected, graph, and we use this to prove the existence of our template in \Cref{lem:template} (using the connectivity property in \Cref{defn:path-connected}). Similar properties have been studied before in random graphs for various applications (see e.g.\ \cite{broder1996efficient,haxell2001tree,glebov2013hamilton}), and our main lemma (Lemma~\ref{lem:template}) can be obtained as a (not quite immediate) corollary of Lemma 3.4.\ from \cite{montgomery2021spanning} combined with a multi-round exposure argument like the one we use below. 
We include a different proof of \Cref{lem:template} for completeness but also use this to introduce two intermediate results (\Cref{prop:connectonepair} and \Cref{lem:connectmanypairs}) that we later use in the same manner to prove our key technical result, \Cref{thm:pathconnect}. 

We first remind the reader that a path through a subset of vertices $V$ is a path whose internal vertices are all in $V$. Let us also introduce, given $U,V \subseteq V(G)$, the \emph{ball of radius $i$ around $U$ within $V$} which we will denote as $B^{i}_G(U,V)$, namely it is the set of vertices in $V$ which can be reached by a path through $V$ of length at most $i$ starting from a vertex in $U$. The starting vertex in $U$ is \emph{not} required to be in $V$ itself, as is usual with our definition of paths through a set. We do, however, only consider reachable vertices within $V$, so that $B^{i}_G(U,V) \subseteq V$. 

For \Cref{prop:connectonepair}, we take an expansion property of sets of size $t\in \N$ and use this to connect a pair of vertices from a set of $2t-1$ pairs (c.f.\ the collection $\mathcal{P}$ in \Cref{defn:path-connected}).

\begin{prop}\label{prop:connectonepair}
Let $1\leq \ell,t \leq n$. Let $G$ be an $n$-vertex graph and let $V\subseteq V(G)$ be of size $|V|\geq 4t-2$ such that, for every $U \subseteq V(G)$ with size $|U|=t$, we have $|B^\ell_{G}(U,V)|>\frac{|V|}2$. Let $x_1,\ldots,x_{2t-1},y_1,\ldots,y_{2t-1}$ be distinct vertices of $G$. 

Then, for some $j\in [2t-1]$, there is an $x_jy_j$-path in $G$ through $V$ with length at most $4\ell \log n$.
\end{prop}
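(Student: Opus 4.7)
The plan is an iterated BFS-style argument, growing balls from each $x_j$ and each $y_j$ through $V$ over $2\log n$ rounds of radius $\ell$, and closing by pigeonhole on the $2t-1$ pairs. For each $j\in[2t-1]$ and integer $i\geq 0$ set
$$X^i_j:=B^{i\ell}_G(\{x_j\},V),\qquad Y^i_j:=B^{i\ell}_G(\{y_j\},V),$$
both subsets of $V$. The key (``explosion'') observation is: if $|X^i_j\cup\{x_j\}|\geq t$ then taking any $t$-subset of $X^i_j\cup\{x_j\}$ and applying the expansion hypothesis gives $|X^{i+1}_j|>|V|/2$; symmetrically for $Y^i_j$. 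So once a ball reaches ``critical mass'' $t$, one further round of length $\ell$ pushes it past $|V|/2$.

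Suppose for contradiction that no pair has an $x_jy_j$-path through $V$ of length $\leq 4\ell\log n$. Then for every $j$ we must have $X^{2\log n}_j\cap Y^{2\log n}_j=\emptyset$, since any $v$ in the intersection lies in $V$ and produces an $x_j\to v\to y_j$ walk through $V$ of length $\leq 4\ell\log n$, hence a path of the desired length. In particular $|X^{2\log n}_j|+|Y^{2\log n}_j|\leq|V|$, so at most one of the two sets has size exceeding $|V|/2$. Let
$$J_X:=\{j:|X^{2\log n}_j|>|V|/2\},\qquad J_Y:=\{j:|Y^{2\log n}_j|>|V|/2\};$$
these are disjoint, so $|J_X|+|J_Y|\leq 2t-1$. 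To contradict this, I would separately prove $|J_X|\geq t$ and (symmetrically) $|J_Y|\geq t$: pigeonhole then forces an index in $J_X\cap J_Y$, whose $X$- and $Y$-balls each exceed $|V|/2$ in $V$ and therefore intersect.

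Focusing on $|J_X|\geq t$, suppose instead $|J_X|\leq t-1$, so at least $t$ indices $j$ are ``stuck'' in that $|X^{2\log n}_j|\leq|V|/2$. For each such $j$, the contrapositive of the explosion property combined with the monotonicity $X^i_j\subseteq X^{i+1}_j$ yields $|X^i_j|\leq t-1$ for every $i\leq 2\log n-1$. The naive contradiction — taking a $t$-subset $J_0$ of the stuck indices and applying expansion to $\{x_j:j\in J_0\}$ to get $|\bigcup_{j\in J_0}X^1_j|>|V|/2$ while the union is bounded by $t(t-1)$ — closes the argument only when $|V|>2t(t-1)$, which is weaker than what the hypothesis $|V|\geq 4t-2$ supplies for $t\geq 3$.

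The main obstacle is promoting this naive bound for general $t$, and this is where the $\log n$ rounds are used. My plan is to iterate the expansion round-by-round on the union $U_i:=\bigcup_{j\in J_0}X^i_j$: if some $X^i_j$ reaches size $t$ then explosion immediately forces $j\in J_X$, contradicting stuckness; otherwise all $|X^i_j|\leq t-1$ and one shows that $|U_{i+1}|$ at least doubles up to the capacity $t(t-1)$, using the fact that the single-step expansion $|B^{\ell}_G(T,V)|>|V|/2$ applied to various $t$-subsets $T\subseteq U_i\cup\{x_j\}_{j\in J_0}$ adds many new vertices to $U$ each round. Since the union cannot grow beyond $t(t-1)$ while every ball stays below $t$, after $O(\log t)\leq 2\log n$ rounds some stuck ball must cross the threshold, contradicting $|J_X|\leq t-1$. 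The hardest step is making this doubling quantitative with the weak input $|V|\geq 4t-2$; I expect to handle it by splitting into two cases, one in which some $(t-1)$-subset of $V(G)$ has a small $\ell$-ball (in which case padding it by an arbitrary vertex and applying the $t$-set expansion forces almost every vertex to have a $>t$-sized singleton ball, and hence $\geq t$ of the $x_j$'s explode in a single round), and a complementary case where the ``doubling'' of $U_i$ proceeds cleanly because no small subset of $V(G)$ has an anomalously small ball.
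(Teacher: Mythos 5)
Your setup matches the paper's opening. Tracking $X^i_j:=B^{i\ell}_G(\{x_j\},V)$, defining an index $j$ to be ``stuck'' when $|X^{2\log n}_j|\leq |V|/2$, noting via the explosion observation that stuck indices have $|X^i_j|<t$ at every round $i\le 2\log n-1$, and deducing by pigeonhole that at least $t$ of the $x_j$'s are stuck --- this is exactly the paper's identification of $I_x$ and the step that $|I_x|\ge t$. The divergence, and the gap, comes afterward.

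Applying expansion to the $t$-set $\{x_j:j\in J_0\}$ gives $|U_1|>|V|/2$, while stuckness caps $|U_i|\le t(t-1)$; you correctly note this only yields $|V|<2t(t-1)$, which contradicts $|V|\ge 4t-2$ only for $t\le 2$. But the proposed ``doubling'' of $U_i$ cannot improve this. The expansion hypothesis applied to a $t$-subset $T$ provably lands inside $U_{i+1}$ only when $T\subseteq X^i_j\cup\{x_j\}$ for a \emph{single} $j$, which is exactly what stuckness forbids. Applying it to $U_i$ itself (which contains $B^\ell_G(U_i,V)\subseteq U_{i+1}$) merely re-derives $|U_{i+1}|>|V|/2$, which you already had at $i=1$; the cap $t(t-1)$ and the floor $|V|/2$ are both fixed, so iterating gives nothing new. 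The case split you sketch is also incomplete: if every $(t-1)$-set $S$ has $|B^\ell_G(S,V)|>|V|/2-t\ge t-1$, you can only conclude that no stuck ball ever has size exactly $t-1$ before round $2\log n-1$; the argument then needs the analogous statement for $(t-2)$-sets, about which the hypothesis on $t$-sets gives no information, and the regress does not terminate.

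The paper closes the argument by a different mechanism that sidesteps the $t(t-1)$ cap entirely: instead of growing per-vertex balls and taking unions, it tracks the \emph{joint} ball of a set $X$ of stuck $x_j$'s while \emph{shrinking} $X$ by halving. Taking $r$ maximal so that some $X\subseteq\{x_i:i\in I_x\}$ of size $\le t(2/3)^r$ has $|B^{(r+1)\ell}_G(X,V)|>|V|/2$, one gets $r<(2\log n)-1$, so stuckness forces $|X|\ge 2$; splitting $X=X_0\cup X_1$ into pieces of size $\le\tfrac23|X|$, the bound $|V|/2\ge 2t-1$ (this is exactly where $|V|\ge 4t-2$ enters) gives some piece whose ball already has size $\ge t$, and one more radius-$\ell$ expansion step then pushes it past $|V|/2$, contradicting the maximality of $r$. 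Your approach would need a substitute for this halving step; as written, it does not go through.
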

\begin{proof} 
Let $I_x$ be the set of $i\in [2t-1]$ for which $|B^{2\ell\log n}_G(x_i,V)|\leq \frac{|V|}2$ and let $I_y$ be the set of $i\in [2t-1]$ for which $|B^{2\ell\log n}_G(y_i,V)|\leq \frac{|V|}2$. Note that the required path can be found if there exists some $j\in [2t-1]$ with $j\notin I_x$ and $j\notin I_y$, for then $B^{2\ell\log n}_{G}(x_j,V)$ and $B^{2\ell\log n}_{G}(y_j,V)$ each have size larger than $\frac{|V|}2$ and must therefore intersect. Thus, we can assume that there is no such $j$, and, therefore, without loss of generality, that $|I_x|\geq t$.

Let $r\geq 0$ be the largest integer 
for which there is a set $X\subseteq \{x_i:i\in I_x\}$ for which $|X|\leq t\left(\frac23\right)^r$ and $|B^{(r+1)\ell}_G(X,V)|>\frac{|V|}2$, and let $X$ be any such set. Note that this is possible as $|I_x|\geq t$ and any subset of $\{x_i:i\in I_x\}$ with size $t$ satisfies these conditions for $r=0$ by the assumption of the proposition. 

Now, as $X\neq \emptyset$, we have that $t\left(\frac23\right)^r\geq 1$, so that $r\leq 2\log t\leq 2\log \frac{n}2$ (using $\left(\frac23\right)^2<\frac12$ and $n\geq |V|\geq 4t-2\geq 2t$) and thus $r< (2\log n)-1$. Thus, by the definition of $I_x$, $|B^{(r+1)\ell}_G(x_i,V)|\leq |B^{2\ell\log n}_G(x_i,V)|\leq \frac{|V|}2$ for each $i\in I_x$, and hence $|X|\geq 2$.
This allows us to partition $X=X_0\cup X_1$ with $|X_0|,|X_1|\leq \frac23 |X|\leq t\left(\frac23\right)^{r+1}$. As
\[
|B^{(r+1)\ell}_G(X_0,V)|+|B^{(r+1)\ell}_G(X_1,V)|\geq |B^{(r+1)\ell}_G(X_0\cup X_1,V)|=|B^{(r+1)\ell}_G(X,V)|>\frac{|V|}2\geq 2t-1,
\]
we can pick $j\in [2]$ such that $|B^{(r+1)\ell}_G(X_j,V)|\geq t$. Therefore, using the expansion of $t$-sets into $V$, we have $|B^{(r+2)\ell}_G(X_j,V)|>\frac{|V|}2$, contradicting the maximality of $r$ as $|X_j|\leq t\left(\frac23\right)^{r+1}$.
\end{proof}

 We now take a stronger expansion property and use \Cref{prop:connectonepair} in combination with the Aharoni-Haxell hypergraph matching theorem (\Cref{thm:hyperhall}) to find edge disjoint paths connecting a set of vertex pairs  (see also the discussion after \Cref{thm:hyperhall}), proving \Cref{lem:connectmanypairs}.

\begin{lemma}\label{lem:connectmanypairs}
Let $n\geq 2$ and $1\leq \ell,k \leq n$. Let $G$ be an $n$-vertex graph and let $V\subseteq V(G)$ so that $|V|\ge \frac n8+1$ and suppose, for each $U\subseteq V(G)$ and $F\subseteq E(G)$ with $U\neq\emptyset$ and $|F|\leq 2^9k|U|(\ell\log n)^2$, we have $|B^\ell_{G-F}(U,V)|>\frac{|V|}2$.

Then, $G$ is $(4\ell\log n,k)$-path connected through $V$.
\end{lemma}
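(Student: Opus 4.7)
The plan is to translate the required $(4\ell\log n, k)$-path connectivity into a hypergraph matching problem and apply the Aharoni-Haxell theorem (Theorem~\ref{thm:hyperhall}), using the reduction described right after that theorem. Given such a collection $\mathcal{P}$, for each pair $\{x,y\}\in\mathcal{P}$ I define the hypergraph $H_{\{x,y\}}$ on vertex set $E(G)$ whose hyperedges are the sets $E(P)$ as $P$ ranges over all $xy$-paths through $V$ in $G$ of length at most $4\ell\log n$. Each hyperedge has at most $4\ell\log n$ vertices, so it suffices to check that for every $I\subseteq \mathcal{P}$, there are more than $4\ell\log n\cdot (|I|-1)$ edge-disjoint paths in $G$, each being a path through $V$ of length at most $4\ell\log n$ joining some pair of $I$.

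I would verify this by contradiction: assume that for some $I\subseteq \mathcal{P}$ the maximum such collection $\mathcal{Q}$ has size at most $4\ell\log n\cdot(|I|-1)$, and let $F\subseteq E(G)$ be the union of the edge sets of the paths in $\mathcal{Q}$. Then $|F|\leq (4\ell\log n)^2|I|=16(\ell\log n)^2|I|$, and by the maximality of $\mathcal{Q}$ no pair in $I$ is joined in $G-F$ by a path through $V$ of length at most $4\ell\log n$. The goal is then to use Proposition~\ref{prop:connectonepair} in $G-F$ to produce such a path and contradict the maximality of $\mathcal{Q}$. To set up Proposition~\ref{prop:connectonepair}, I first greedily extract a sub-collection $I'\subseteq I$ with all $2|I'|$ endpoints distinct; since each chosen pair forbids at most $2k-1$ other pairs, this yields $|I'|\geq \lceil |I|/(2k)\rceil$.

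Then set $t=\max\{1,\lceil |I|/(32k)\rceil\}$ and take $2t-1$ pairs from $I'$, giving $4t-2$ distinct vertices. The choice of $t$ ensures $16|I|\leq 2^9 k t$, so the hypothesis of the lemma applied in $G-F$ with any $U\subseteq V(G)$ of size $t$ yields $|B^{\ell}_{G-F}(U,V)|>\frac{|V|}{2}$. The remaining numerical conditions, namely $|V|\geq 4t-2$ and $|I'|\geq 2t-1$, are routine to check using $|V|\geq \frac{n}{8}+1$, $|I|\leq \frac{nk}{2}$, and splitting into the cases $|I|\leq 32k$ (where $t=1$) and $|I|>32k$ (where $t\leq |I|/(16k)$). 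Proposition~\ref{prop:connectonepair} then delivers an $x_jy_j$-path in $G-F$ through $V$ of length at most $4\ell\log n$, contradicting maximality. The only mildly delicate point is the bookkeeping with $t$: it must be large enough to invoke the expansion hypothesis with this $F$, yet small enough that Proposition~\ref{prop:connectonepair} applies (in particular giving $2t-1\leq |I'|$); fortunately these constraints are comfortably compatible, so no genuine obstacle arises.
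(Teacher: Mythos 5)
Your proposal is correct and follows essentially the same route as the paper: translate to a hypergraph matching problem via the reduction after Theorem~\ref{thm:hyperhall}, assume a small maximal matching for contradiction, let $F$ be its edge set, extract a sub-collection $I'$ with distinct endpoints, and invoke Proposition~\ref{prop:connectonepair} in $G-F$ with a carefully chosen $t$. The only difference is cosmetic: you define $t=\max\{1,\lceil |I|/(32k)\rceil\}$ directly in terms of $|I|$, while the paper sets $t=\lceil |I'|/16\rceil$ in terms of $|I'|$; both satisfy the same three constraints ($|F|\le 2^9 k t (\ell\log n)^2$, $|V|\ge 4t-2$, $|I'|\ge 2t-1$), so this is a matter of bookkeeping taste rather than substance.
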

\begin{proof} To show that $G$ is $(4\ell\log n,k)$-path connected through $V$, let $\mathcal{P}\subseteq \binom{V(G)}{2}$ be an arbitrary collection of vertex pairs such that each vertex appears in at most $k$ pairs in $\mathcal{P}$. Let $r=|\mathcal{P}|$, and order the pairs in $\mathcal{P}$. For each $i\in [r]$, let $H_i$ be the hypergraph with vertex set $E(G)$ and edge set corresponding to the edge sets of paths through $V$ with length at most $4\ell\log n$ connecting the $i$-th pair of vertices in $\mathcal{P}$, noting that the size of any edge is at most $4\ell\log n.$  

Now, for each $I\subseteq [r]$, let $M_I$ be a maximal matching in $\bigcup_{i\in I}H_i$. We will show that $|M_I|\geq 4\ell \log n \cdot |I|$ for each $I\subseteq [r]$. Towards a contradiction, suppose that, for some $I\subseteq [r]$, we have $|M_I|<4\ell \log n \cdot |I|$, noting that we must have $I\neq \emptyset$. Let $F$ be the set of edges of $G$ in any path corresponding to an edge in $M_I$, so that $F=\bigcup_{e\in M_I}V(e)$. Note that $|F|< 4\ell\log n \cdot 4\ell \log n \cdot |I| = (4\ell \log n)^2 |I|$. Let $I'$ be a maximal subset of $I$ such that no vertex appears in a pair in $I'$ more than once, so that $2k|I'|\geq |I|$. Note also that this ensures that $|I'|\le \frac n2$. Now, let $t=\ceil{|I'|/16}\geq 1$, so that $2t-1 \le 16t-15\le |I'|\le 16t$ and $4t-2 \le  \ceil{\frac n8}+1\le|V|$. Note that $|F|< (4\ell\log n)^2|I|\leq k\frac{|I'|}2(8\ell\log n)^2\leq 8kt(8\ell\log n)^2$, and therefore any set $U\subseteq V(G)$ with size $t$ satisfies $|B^\ell_{G-F}(U,V)|>\frac{|V|}2$.
Then, by \Cref{prop:connectonepair} applied to $G-F$, for some $j\in I'$ there is a path in $G-F$ between the $j$-th pair in $\mathcal{P}$ with interior vertices in $V$ and length at most $4\ell \log n$. Such a path corresponds to an edge of $H_j$ with no vertices in $F$, a contradiction to the maximality of $M_I$. Thus, we must have $|M_I|\ge 4\ell\log n  \cdot |I|$.

Therefore, by \Cref{thm:hyperhall}, there is a set of paths $P_i$, $i\in [r]$, in $G$ with $E(P_i)\in H_i$ for each $i\in [r]$, such that $E(P_i)$, $i\in [r]$, form edge disjoint sets. Thus, $G$ is  $(4\ell\log n,k)$-path connected through $V$.
\end{proof}

We now prove the existence of our template graph, by showing an appropriate expansion condition is likely in a certain binomial random graph and applying \Cref{lem:connectmanypairs}.

\begin{lemma}\label{lem:template}
For any large enough $n$, there exists an $n$-vertex graph $G$ with $\Delta(G)\leq 2^{8}\log^5 n$ and a set $V\subseteq V(G)$ with $|V|=\frac n6$ such that $G$ is $\left(\frac14\log^2 n,2\right)$-path connected through $V$.
\end{lemma}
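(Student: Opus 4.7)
The plan is to take $G \sim \G(n, p)$ with $p = 2^7 \log^5 n / n$ and $V$ any fixed $n/6$-subset of $V(G)$ (e.g.\ $V = [n/6]$), and then apply \Cref{lem:connectmanypairs} with $\ell = \tfrac{1}{16}\log n$ and $k = 2$. Since $4\ell\log n = \tfrac{1}{4}\log^2 n$ and $2^9 k(\ell\log n)^2 = 4\log^4 n$, this reduces the task to showing, w.h.p.\ over $G$, that (a) $\Delta(G) \leq 2^8 \log^5 n$, and (b) for every nonempty $U \subseteq V(G)$ and $F \subseteq E(G)$ with $|F| \leq 4|U|\log^4 n$, we have $|B^\ell_{G-F}(U,V)| > n/12$. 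Item (a) is immediate from Chernoff applied to each vertex degree (mean $<2^7\log^5 n$) together with a union bound over vertices.

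For item (b) I would iterate the ball $B_i := B^i_{G-F}(U,V)$ and control $|B_i|$ according to the regime defined by $\lambda_i := |B_i| p$. In the \emph{small regime} $\lambda_i \leq 1/10$, Chernoff gives $|N_G(B)\cap V| \geq 10|B|\log^5 n$ for a fixed $B$, and a union bound over subsets of each size succeeds because the tail $\exp(-\Theta(|B|\log^5 n))$ easily beats $\binom{n}{|B|} \leq \exp(2|B|\log n)$. Subtracting the at-most-$|F|$ loss and using $|F| \leq 4|U|\log^4 n \leq 4|B_i|\log^4 n$ (which holds from $i=1$ onwards, since after the first step $|B_1| \gg |U|$ by the same Chernoff bound applied to $U$) leaves a multiplicative growth factor of at least $8\log^5 n$ per step, so this regime is exited after $O(\log n/\log\log n)$ steps. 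In the \emph{large regime} $\lambda_i \geq \log n$, the expected size of $V \setminus N_G(B_i)$ is at most $|V| n^{-1}$ and an upper-tail Chernoff bound plus a union bound over $B \subseteq V$ shows that uniformly $|N_G(B_i)\cap V|$ covers all but $o(n)$ of $V$. An intermediate window $\lambda_i \in (1/10, \log n)$ is crossed by a single expansion step, which pushes $|B_i|$ to a constant fraction of $n$ and hence into the large regime by direct mean computation.

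The main obstacle is handling the adversarial $F$ once $|U|$ is large, since the naive inequality $|N_{G-F}(B_i)\cap V| \geq |N_G(B_i)\cap V| - |F|$ is useless as soon as $|F|$ exceeds $|V|$. I would resolve this in the large regime via a min-degree argument: by Chernoff and a union bound over $B \subseteq V$ with $|B|p \geq \log n$, w.h.p.\ every $v \in V$ has at least $|B|p/2$ edges to $B$ in $G$, so the effective loss from removing $F$ is at most $|F|/(|B|p/2) \leq 8\log^4 n / p = n/(16\log n) = o(n)$, \emph{independently} of the size of $U$. Combining the small- and large-regime analyses gives $|B_\ell| > n/12$ within $O(\log n/\log\log n) + O(1) \leq \ell$ steps for large enough $n$, verifying (b) and completing the proof. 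The required concentration and union bound estimates are routine once the three regimes and the min-degree bound are arranged correctly; the delicate point is precisely tracking $|F|$ against the expansion rate at the boundaries between regimes.
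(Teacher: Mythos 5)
Your setup, parameters, and the reduction to Lemma~\ref{lem:connectmanypairs} all match the paper (the paper takes $p=150\log^5 n/n$ and $\ell=\tfrac{1}{16}\log n$, $k=2$). The gap is in the large-regime min-degree claim, and it is a real one. You assert that, w.h.p., for every $B\subseteq V$ with $|B|p\ge\log n$ and every $v\in V$, the degree of $v$ into $B$ is at least $|B|p/2$, via Chernoff plus a union bound over $B$. For a fixed $(v,B)$, Chernoff gives failure probability roughly $e^{-|B|p/8}$; but $|B|p\le |V|p=O(\log^5 n)$ for every $B\subseteq V$, so this probability is never smaller than $e^{-O(\log^5 n)}$. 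Meanwhile the number of candidate sets $B$ of size $b$ is $\binom{n}{b}$, which for $b=\Theta(n/\log^4 n)$ is $\exp\bigl(\Theta(n\log\log n/\log^4 n)\bigr)$ and for $b=\Theta(n)$ is $\exp(\Theta(n))$. In every case the union bound fails catastrophically. This is not an incidental slip: you cannot fix a random $B_i$ (it depends on $G$), so some union bound is needed, and the min-degree event is simply not rare enough. The same difficulty silently affects your intermediate regime once $|U|$ is large enough that $|F|\le 4|U|\log^4 n$ can exceed $\Theta(n)$: the naive subtraction $|N_G(B_i)\cap V|-|F|$ then becomes useless, and you lean on the min-degree bound, which you do not have.

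The paper avoids this by never trying to certify a min-degree property over random sets $B_i$. Instead it proves a single one-step claim: for each \emph{fixed} $(U,F)$ with $|F|\le 4|U|\log^4 n$, the probability that $|B_{G-F}(U,V)|\le\min\{2^{16}|U|,|V|/2\}$ is at most $2^{-10|U|\log^5 n}$, by observing that a failure forces all $\ge |U|\cdot|V|/2-|F|$ potential edges from $U$ to $V\setminus B_{G-F}(U,V)$ (outside $F$) to be absent, an event of probability $(1-p)^{|U||V|/2-|F|}=e^{-\Omega(|U|\log^5 n)}$. Crucially this tail scales with $|U|$, which is exactly what is needed to absorb the $\binom{n}{|U|}\binom{n^2}{\le 4|U|\log^4 n}=2^{O(|U|\log^5 n)}$ choices of $(U,F)$ in the union bound (with room to spare in the constant). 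The claim handles arbitrarily large $|U|$ and $|F|$ in one shot, including exactly the regime where your argument breaks, and then iterating the claim $\ell=\tfrac1{16}\log n$ times gives $|B^\ell_{G-F}(U,V)|>|V|/2$. If you want to keep a multi-regime analysis, the essential repair is to replace the min-degree step with a direct non-edge probability computation of this form, whose tail grows with $|U|$ (or with $|B_i|$) rather than being capped at $e^{-O(\log^5 n)}$.
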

\begin{proof}
Let $p=\frac{150\log^5 n}{n}$, let $V\subseteq [n]$ be a set of size $\frac n6$ and let $G \sim \G\left(n,p\right)$. As $p=\omega\left(\frac{\log n}{n}\right)$, a standard application of Chernoff's inequality (\Cref{chernoff}) shows that, with high probability, $\Delta(G)\leq \frac32pn= 225\log^5 n$. Therefore, it is sufficient to show that, with high probability, $G$ is $\left(\frac14\log^2 n,2\right)$-path connected through $V$.

\begin{claim*}
With high probability, for each $U\subseteq V(G)$ and each $F\subseteq E(G)$ with  $U\neq\emptyset$ and $|F|\leq 4|U|\log^4 n$ we have 
\begin{equation}\label{eqn:U}
|B_{G-F}(U,V)| > \min\left\{2^{16}|U|,\frac {|V|}{2}\right\}.
\end{equation}
\end{claim*}
\begin{proof} For large $n$, we will show for each $U\subseteq V(G)$ and $F\subseteq E(G)$ with  $U\neq\emptyset$ and $|F|\leq 4|U|\log^4n$ that \eqref{eqn:U} holds with probability at least $1-2^{-10|U|\log^5n}$, so that \eqref{eqn:U} holds for all $U\subseteq V(G)$ and $F\subseteq E(G)$ with $|F|\leq 4|U|\log^4n$ with probability at least
\[
1-\sum_{u=1}^{n} \binom{n}{u}\sum_{f=0}^{4u\log^4n}\binom{\binom{n}{2}}{f} \cdot  2^{-10u\log^5 n}\geq 1-\sum_{u=1}^nn^u\cdot n^2\cdot n^{8u\log^4n}\cdot 2^{-10u\log^5 n}\geq 1-\sum_{u=1}^nn^{-2}= 1-n^{-1},
\]
and the claim holds.

Let then $U\subseteq V(G)$ with $u=|U|\geq 1$ and $F\subseteq E(G)$ with $f=|F|\leq 4u\log^4 n$. Note that if $|B_{G-F}(U,V)| \leq  \min\left\{2^{16}u,\frac {|V|}{2}\right\}$, then there is some set $X=B_{G-F}(U,V)$ with size at most $2^{16}u$ such that $|V\setminus  X|\geq \frac{|V|}2$ and there are no edges between $U$ and $V\setminus  X$ other than in $F$. The probability of such a set $X$ existing is at most
\[
\sum_{i=0}^{2^{16}u}\binom{n}{i}\cdot (1-p)^{u\cdot \frac{|V|}{2}-|F|}\leq n^{2^{16}u+1}\cdot e^{-p(un/12-f)}\leq  e^{-pun/15}\leq 2^{-10u\log^5n},
\]
as required.
\end{proof}

With high probability then, we have that the conclusion of the claim holds for $G$. For any $U\subseteq V(G)$ and $F\subseteq E(G)$ with  $U\neq\emptyset$ and  $|F| \le 4|U| \log^4 n$, we have then by induction that, for each $i\geq 0$,
\[
|B_{G-F}^i(U,V)| > \min\left\{2^{16i}|U|,\frac{|V|}2\right\}.
\]
Setting $i=\ell=\frac{\log n}{16}$, we thus have $|B^{\ell}_{G-F}(U, V)|> \frac{|V|}2.$ Thus, by \Cref{lem:connectmanypairs} with $\ell=\frac{\log n}{16}$ and $k=2$, we have that $G$ is $\left(\frac14\log^2 n,2\right)$-connected through $V$. 
\end{proof}

\section{Robust sublinear expansion}\label{sec:expansion}
In this section, we will explain the expansion we use and its background, our new perspective on this type of expansion, and prove some key results using the expansion. Before doing so, for convenience, we state the definition of expansion that we use, as follows.

\begin{defn} \label{defn:robust-sublinear-expansion}
An $n$-vertex graph $G$ is an $(\eps,s)$-expander if, for every $U\subseteq V(G)$ and $F\subseteq E(G)$ with $1\le |U|\leq \frac23n$ and $|F|\leq s|U|$, we have
\begin{equation}
|N_{G-F}(U)|\geq \frac{\eps|U|}{\log^2 n}.\label{eqn:expands}
\end{equation}
\end{defn}

As the bound on the size of the neighbourhood guaranteed at \eqref{eqn:expands} is $o(|U|)$ as $n\to \infty$, we consider this to be \emph{sublinear} expansion. We often use Definition~\ref{defn:robust-sublinear-expansion} when $s$ is polylogarithmic in $n$, so that the set of edges $F$ may be of size $\omega(|U|)$ as $n\to\infty$, and call this \emph{robust} sublinear expansion. Note that, for each $v\in V(G)$, setting $U=\{v\}$ and setting $F$ to be the set of edges incident to $v$, we have that \eqref{eqn:expands} does not hold, so we must have $|F|>s$. Thus, $\delta(G) > s$ for any $(\eps,s)$-expander $G$, and therefore the expanders we will work with will always have minimum degree at least polylogarithmic in $n$. We discuss this notion of expansion in more detail with the relevant background in Section~\ref{sec:generalexpansionchat}. In Section~\ref{subsec:alternative}, we then introduce an alternative perspective of this expansion. In \Cref{sec:decomp-into-expanders}, we prove a lemma which almost decomposes an arbitrary graph into expanders. Finally, in \Cref{sec:edge-partition-of-expanders}, we prove sublinear expanders can be (edge) partitioned into expanders (with only slightly weaker expansion parameters).

\subsection{Expansion}\label{sec:generalexpansionchat}
Classical graph expansion is an immensely powerful idea in graph theory and computer science that has seen a very wide variety of applications (see, for example, the survey \cite{expander-survey}). A typical such property in a graph $G$ says that $|N_G(U)|\geq \lambda|U|$ for any set $U\subseteq V(G)$ which is not too large, where $\lambda$ is some strictly positive absolute constant not dependent on the graphs $G$ considered, though other notions have been considered instead of requiring a large $|N_G(U)|$, for example bounding the number of edges in $G$ between $U$ and $V(G)\setminus U$ (as indeed used by Conlon, Fox and Sudakov~\cite{conlon2014cycle}). 
Sublinear expansion is a weaker notion of this classical expansion introduced by Koml\'os and Szemer\'edi~\cite{K-Sz-1,K-Sz-2}, where we take a much smaller value of $\lambda$, but which is significant as every graph contains a sublinear expander $H$ with $\lambda=\Theta(1/\log^2|H|)$ (and even has a nice decomposition into sublinear expanders, as we will prove and use). Koml\'os and Szemer\'edi used sublinear expansion to find minors in sparse graphs, and more recently, sublinear expansion has found a host of other applications (see, for example, \cite{shapira2015small,montgomery2015logarithmically,fernandez2022nested,fernandez2022build,chakraborti2021well,haslegrave2021crux,haslegrave2021ramsey,kim2017komlos,liu2017mader,liu2020solution,liu2020clique,letzter2023immersion}).

Such sublinear expansion in a graph $G$ has some very weak \emph{robustness} properties, in that if $\lambda|U|/2$ vertices in $V(G)\setminus U$ are removed from the graph then the set $U$ will still expand (with the neighbourhood of $U$ still having at least $\lambda|U|/2$ vertices), and this property is used in many of the applications of sublinear expansion cited above. 
However, we will distinguish \emph{robust sublinear expansion} to be that where $U$ expands despite the removal of any set $F$ of at most $s|U|$ edges in $G$, where $s$ grows with $|G|$ so that this bound is superlinear in $|U|$, as in \Cref{defn:robust-sublinear-expansion}. Such robust sublinear expansion has recently been developed essentially independently by groups of different authors, appearing in some form in work by Haslegrave, Kim, and Liu~\cite{haslegrave2021extremal} and by Sudakov and Tomon~\cite{sudakov2022extremal} (with the parallel clearer in the expansion used in subsequent work by Jiang, Methuku and Yepremyan~\cite{jiang2021rainbow} and by Tomon~\cite{tomon2022robust}).
Roughly speaking, the expansion we use, as given in Definition~\ref{defn:robust-sublinear-expansion}, is a slightly weaker version of that used by Haslegrave, Kim, and Liu~\cite{haslegrave2021extremal} (so that we can find an almost-decomposition into such expanders) and a stronger version than subsequent developments of the expansion used by Sudakov and Tomon~\cite{sudakov2022extremal} (which makes it more powerful when we use it).

\subsection{An alternative notion of robustness}\label{subsec:alternative}
In \Cref{defn:robust-sublinear-expansion}, we consider the expansion to be robust as sets expand despite an arbitrary removal of a small number of edges. This can be alternatively encoded by recording that every vertex subset $U$ either expands very well (by some factor greater than $1$) or that its `robust neighbourhood' of vertices with plenty of edges towards $U$ expands well, though perhaps sublinearly (as in Proposition~\ref{prop:expansion-red-blue} below). It will be convenient to define this robust neighbourhood for any parameter $d$ as $$N_{G,d}(U):=\{v \in V(G) \setminus U : |N_G(v) \cap U| \ge d\},$$
that is, the set of vertices in a graph $G$, outside of a subset of vertices $U$, which have degree at least $d$ towards $U$.

\begin{prop}\label{prop:expansion-red-blue}
Let $G$ be an $n$-vertex $(\eps,s)$-expander, $U\subseteq V(G), |U|\le \frac23 n$ and $F$ a set of at most $s|U|/2$ edges. Then, for any $0<d \le s$, either
\[
\text{\textbf{\emph{a)}} }\;\;|N_{G-F}(U)| \ge \frac{s|U|}{2d},\;\;\;\text{ or }\;\;\;\text{\textbf{\emph{b)}}}\;\; |N_{G-F,d}(U)|\ge \frac{\eps|U|}{\log^{2} n}.
\]
\end{prop}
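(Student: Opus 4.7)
The plan is to prove the proposition by contradiction: assume condition \textbf{a)} fails, and then use the sparsity of the non-robust part of the neighbourhood to augment $F$ into a set of edges still small enough that the defining expansion property of $G$ applies. More precisely, assume that $|N_{G-F}(U)| < \frac{s|U|}{2d}$. Let $A$ be the set of vertices in $N_{G-F}(U)$ whose degree into $U$ in $G-F$ is strictly less than $d$, and observe that $N_{G-F}(U) \setminus A \subseteq N_{G-F,d}(U)$, since removing more edges can only decrease degrees.

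The key computation is then the following. Let $F'$ be the union of $F$ together with every edge of $G-F$ joining $U$ to $A$. Each vertex of $A$ contributes fewer than $d$ such edges, so
\[
|F'| \leq |F| + d|A| \leq \frac{s|U|}{2} + d \cdot |N_{G-F}(U)| < \frac{s|U|}{2} + d \cdot \frac{s|U|}{2d} = s|U|.
\]
Since $1 \le |U| \le \tfrac{2}{3}n$ and $|F'| \leq s|U|$, the $(\eps,s)$-expander property of $G$ gives $|N_{G-F'}(U)| \geq \frac{\eps|U|}{\log^2 n}$.

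Finally, every vertex of $N_{G-F'}(U)$ has at least one neighbour in $U$ via $G - F'$, hence at least one neighbour in $U$ via $G-F$ (since $F \subseteq F'$), so it lies in $N_{G-F}(U)$; but by construction every vertex of $A$ has \emph{no} remaining neighbours in $U$ in $G-F'$, so $N_{G-F'}(U) \subseteq N_{G-F}(U) \setminus A \subseteq N_{G-F,d}(U)$. Combining this containment with the displayed lower bound yields $|N_{G-F,d}(U)| \geq \frac{\eps|U|}{\log^2 n}$, which is exactly \textbf{b)}.

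There is no serious obstacle here: the argument is essentially an averaging/pigeonhole trick combined with a single application of the expansion hypothesis. The only point that requires a moment of care is ensuring the strict inequalities line up so that $|F'| \leq s|U|$, which is why the proposition is stated with $|F| \leq s|U|/2$ rather than $|F|\leq s|U|$ — this gives exactly the room needed to absorb the at most $s|U|/2$ additional edges incident to the non-robust neighbourhood $A$.
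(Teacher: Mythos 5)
Your proof is correct and takes essentially the same approach as the paper's: identify the low-degree part $A$ of $N_{G-F}(U)$, absorb the (fewer than $s|U|/2$) edges between $U$ and $A$ into the forbidden set, and apply the expansion hypothesis to the enlarged forbidden set to land in $N_{G-F,d}(U)$. The only cosmetic difference is that the paper takes $F'$ to be just the new edges (and then bounds $|F| + |F'|$) whereas you fold $F$ into $F'$ from the start.
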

\begin{proof} Suppose \textbf{a)} is not satisfied, so that $|N_{G-F}(U)|<\frac{s|U|}{2d}$. Let $X=N_{G-F}(U)\setminus N_{G-F,d}(U)$, so that $|X|<\frac{s|U|}{2d}$. Let $F'$ be the edges of $G-F$ between $U$ and $X$, so that $|F'| < |X|d\leq s|U|/2$, and hence $|F|+|F'|\leq s|U|$. Note that, by the definition of $F'$, we have $N_{G-F,d}(U)=N_{G-F-F'}(U)$. As $G$ is an $(\eps,s)$-expander, we thus have
\[
|N_{G-F,d}(U)|=|N_{G-F-F'}(U)|\geq \frac{\eps|U|}{\log^{2} n},
\]
and therefore \textbf{b)} holds, as required.
\end{proof}

The following proposition shows more structure can be found in both outcomes of the above proposition. 
Though a more general variant follows easily, the parameters are tailored for our intended application.

\begin{prop}\label{prop:red-blue-expansion-robust-both} There is an $n_0$ such that the following holds for each $n\geq n_0$, $1 \ge \eps\ge2^{-9}$ and $s \ge 8\log^{13} n$. Let $G$ be an $n$-vertex $(\eps,s)$-expander, let $U\subseteq V(G)$ have size $|U|\le \frac23 n$ and let $F$ be a set of at most ${s|U|}/4$ edges. Then, in $G-F$ we can find either
\begin{enumerate}[label = \emph{\textbf{\alph{enumi})}}]
    \item $\frac{|U|}{\log^{7} n}$ vertex disjoint stars, each with $\log^{9} n$ leaves, centre in $U$ and all its leaves in $V(G)\setminus U$, or
    \item a bipartite subgraph $H$ with vertex classes $U$ and $X\subseteq V(G) \setminus U$ such that
    \begin{itemize}
        \item $|X| \ge \frac{\eps|U|}{2\log^2 n}$ and
        \item every vertex in $X$ has degree at least $\log^{4} n$ in $H$ and every vertex of $U$ has degree at most $2\log^{9} n$ in $H.$
    \end{itemize}  
\end{enumerate} 
\end{prop}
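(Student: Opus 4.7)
The plan is to apply Proposition~\ref{prop:expansion-red-blue} to $U$ with the given $F$ and threshold $d = \log^4 n$. This application is valid because $s \ge 8\log^{13} n \ge \log^4 n$ and $|F| \le s|U|/4 \le s|U|/2$. The proposition yields one of two alternatives: either \textbf{(I)} $|N_{G-F}(U)| \ge s|U|/(2\log^4 n) \ge 4\log^9 n \cdot |U|$, or \textbf{(II)} $|X_0| := |N_{G-F,\log^4 n}(U)| \ge \eps|U|/\log^2 n$. I aim to produce outcome \textbf{a)} from case (I) and outcome \textbf{b)} from case (II). A sanity check handles the regime $|U| > n/(\log^2 n + 1)$: there outcome \textbf{a)} is infeasible (no room for $|U|\log^2 n$ leaves in $V\setminus U$), but case (I) is simultaneously impossible (it would force $|N_{G-F}(U)| > n$), so case (II) must hold and yields outcome \textbf{b)}.

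In case (II), I build outcome \textbf{b)} via a trimming-plus-re-expansion argument. Let $F''$ be a minimum set of edges of $(G-F)[U, X_0]$ whose removal brings every vertex $u \in U$ to degree at most $2\log^9 n$ within $X_0$; explicitly $|F''| = \sum_{u} \max(0,\, d_{G-F}(u, X_0) - 2\log^9 n)$. If $|F''| \le s|U|/4$, then $|F \cup F''| \le s|U|/2$, and I re-apply Proposition~\ref{prop:expansion-red-blue} to $U$ and $F \cup F''$ with $d = \log^4 n$. By the construction of $F''$, $|N_{G-F-F''}(U) \cap X_0| \le 2\log^9 n|U|$; and every $v \in V\setminus U \setminus X_0$ has fewer than $\log^4 n$ edges to $U$ in $G-F$ (hence in $G-F-F''$), so cannot lie in $N_{G-F-F'',\log^4 n}(U)$. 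The first alternative of this second application is therefore ruled out, and the second gives a set $X \subseteq X_0$ of size at least $\eps|U|/\log^2 n$ with every $x \in X$ having at least $\log^4 n$ edges to $U$ in $G-F-F''$; taking $H := (G-F-F'')[U, X]$ produces outcome \textbf{b)}. If instead $|F''| > s|U|/4$, a double-counting on the excess degree shows that $|N_{G-F}(U)| \ge 4\log^9 n|U|$ (so case (I) holds) and we proceed as below.

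In case (I), I produce outcome \textbf{a)} by a greedy star-packing. From $|N_{G-F}(U)| \ge 4\log^9 n|U|$, the expander property applied iteratively to shrinking subsets of $U$ (removing low-degree vertices in stages) identifies a set $U^* \subseteq U$ of size at least $|U|/\log^7 n$ whose degrees to $V\setminus U$ in $G-F$ are large enough to sustain a decreasing-degree greedy procedure. That procedure takes the vertices of $U^*$ in decreasing order of degree to $V\setminus U$, and for each selected center allocates $\log^9 n$ still-unused neighbors in $V\setminus U$ as leaves. The main obstacle is showing the greedy never stalls: after $k$ stars have been built, the $(k+1)$-st candidate must retain at least $\log^9 n$ unused neighbors, which requires the $(k+1)$-st highest degree in $U^*$ to exceed $(k+1)\log^9 n$ for all $k < |U|/\log^7 n$. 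This tail-concentration of degrees is the crux of the argument and is established by applying the expander property to tail subsets of $U^*$ obtained by removing previously-chosen high-degree centers.
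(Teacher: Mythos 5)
Your case split sends case (I), where $|N_{G-F}(U)| \ge s|U|/(2\log^4 n) \ge 4\log^9 n|U|$, to outcome \textbf{a)}, and case (II) to outcome \textbf{b)}. This is not the right dichotomy: a large total neighbourhood of $U$ does not yield a large family of \emph{vertex-disjoint} stars with centres in $U$. The neighbourhood could be concentrated on a handful of vertices of $U$ (say one vertex $u_1$ seeing essentially all of $N_{G-F}(U)$, with the remaining vertices of $U$ having most of their neighbours inside $U$ or in a small common set). In that situation at most a handful of disjoint stars are available, far short of $|U|/\log^7 n$ once $|U|$ is moderately large, yet $|N_{G-F}(U)|$ can easily be $\ge 4\log^9 n|U|$. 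Your greedy packing has the same problem: the requirement that the $(k+1)$-st highest degree to $V(G)\setminus U$ exceed $(k+1)\log^9 n$ for all $k<|U|/\log^7 n$ forces the top $|U|/\log^7 n$ degrees to be up to about $|U|\log^2 n$, so the edges needed total $\Omega(|U|^2/\log^5 n)$ --- a quadratic quantity with no justification from $|N_{G-F}(U)| \ge 4\log^9 n|U|$ or from expansion. The ``tail-concentration of degrees'' you defer to is, as far as I can tell, simply false in general: expansion bounds neighbourhood sizes of subsets, not individual vertex degrees.

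The subcase $|F''|>s|U|/4$ of case (II) is also not justified. From $|F''|=\sum_u \max(0,d_{G-F}(u,X_0)-2\log^9 n)$ and $|F''|>s|U|/4$ one gets only $e_{G-F}(U,X_0)>s|U|/4$, hence (via $e_{G-F}(U,X_0)\le |U||X_0|$) that $|X_0|>s/4$; this is nowhere near $4\log^9 n|U|$ once $|U|$ exceeds a polylogarithm, so you cannot conclude case (I). (In the main subcase $|F''|\le s|U|/4$ your construction actually works, but your stated reason for ruling out alternative \textbf{a)} in the second application of \Cref{prop:expansion-red-blue} --- bounding $|N_{G-F-F''}(U)\cap X_0|$ --- is beside the point, since $N_{G-F-F''}(U)$ also contains low-degree vertices outside $X_0$. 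The correct observation is simply that $N_{G-F-F''}(U)\subseteq N_{G-F}(U)$ and you are in case (II), so $|N_{G-F}(U)|< s|U|/(2\log^4 n)$ already.)

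The paper's proof avoids both issues by organising the case split around the star family rather than around which alternative of \Cref{prop:expansion-red-blue} fires for $U$. One takes a \emph{maximal} collection of vertex-disjoint stars in $G-F$ with $\log^9 n$ leaves, centre in $U$ and leaves in $V(G)\setminus U$; if there are at least $|U|/\log^7 n$ of them, outcome \textbf{a)} holds immediately. Otherwise, maximality yields the crucial structural fact that no vertex of $U$ outside the centre set $C$ has $\log^9 n$ neighbours in $V(G)\setminus(U\cup L)$, where $L$ is the leaf set, so $|N_{G-F}(U\setminus C)|<2|U|\log^9 n$. It is this bound --- obtained from maximality, not from a neighbourhood assumption on $U$ --- that rules out alternative \textbf{a)} of \Cref{prop:expansion-red-blue} applied to a trimmed set $U\setminus B$ (where $B$ also discards the vertices already saturated in a greedily-built $H$), thereby forcing alternative \textbf{b)} and producing the set $X$. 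You should rethink the argument along these lines: the dichotomy a) vs.\ b) is governed by the size of a maximal star family, and the maximality is what makes the subsequent expansion argument close.
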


\begin{proof}
Take a maximal collection of vertex disjoint stars in $G-F$ with $\log^{9} n$ leaves and centre in $U$ and leaves outside of $U$. Let $C \subseteq U$ be the set of centres of these stars and $L\subseteq V(G)\setminus U$ be the set consisting of all their leaves. Assuming \textbf{a)} does not hold, we can thus assume that $|C| \le \frac{|U|}{\log^{7} n}$, $|L| \le |U| \log^2 n$, and, by the maximality, that there is no vertex in $U\setminus C$ with at least $\log^9 n$ neighbours in $G-F$ in $V(G)\setminus (U\cup L)$. Thus,
\begin{equation}\label{eqn:NGW}
|N_{G-F}(U\setminus C)|\leq |C|+|L|+|U\setminus C|\cdot \log^9n\leq  \frac{|U|}{\log^{7} n}+|U| \log^2 n+|U|\log^9n< 2|U|\log^9n.
\end{equation}

Let $d=\log^4n$ and $\Delta=2\log^{9} n$. We now construct the set $X\subseteq V(G)\setminus U$ and the bipartite subgraph $H$ through the following process, starting with $X_0=\emptyset$ and setting $H_0$ to be the graph with vertex set $U\cup X_0$ and no edges. Let $r=|V(G)\setminus U|$ and label the vertices of $V(G)\setminus U$ arbitrarily as $v_1,\ldots,v_r$. For each $i\geq 1$, if possible pick a star $S_i$ in $G-F$ with centre $v_i$ and $d$ leaves in $U$ such that $H_{i-1}\cup S_i$ has maximum degree at most $\Delta$, and let $H_i=H_{i-1}\cup S_i$ and $X_i=X_{i-1} \cup \{v_i\}$, while otherwise we set $H_i=H_{i-1}$ and $X_{i}=X_{i-1}$. Finally, let $H=H_r$ and $X=X_r=V(H_r)\setminus U$. We will show that \textbf{b)} holds for this choice of $H$ with bipartition $(U,X)$.

Firstly, observe that $\Delta(H_i)\leq \Delta$ for each $i\in [r]$ by construction, and that every vertex $v_i$ in $X$ has degree exactly $d$ in $H$, so the second condition in \textbf{b)} holds. Thus, we only need to show that $|X| \ge \frac{\eps|U|}{2\log^2 n}$ holds, which will follow as no vertex in $U\setminus C$ has $\frac{\Delta}2=\log^9n$ neighbours in $G-F$ in $X\setminus L$ due to the maximality of our family of disjoint stars defining $C$ and $L$.

Indeed, let $U'$ be the set of vertices in $U \setminus C$ with degree exactly $\Delta$ in $H$. As each vertex in $U' \subseteq U\setminus C$ has fewer than $\frac{\Delta}2$ neighbours in $G-F$ in $X\setminus L$, it must have at least $\frac{\Delta}2$ neighbours in $H$ in $X\cap L$. As each vertex in $X\cap L$ has $d$ neighbours in $H$, we have
\[
|U'|\leq \frac{d|X \cap L|}{\Delta/2}\le \frac{2d|L|}{\Delta}\leq \frac{2d\cdot |U|\log^2n}{\Delta}=\frac{2\log^4 n\cdot |U|\log^2n}{2\log^9n}\le \frac{\eps|U|}{8\log^2 n},
\]
where the last inequality follows for sufficiently large $n$.

Let $B=C \cup U'$, so that
\begin{equation}\label{eqn:Bbound}
|B|\leq \frac{|U|}{\log^{7} n}+\frac{\eps|U|}{8\log^2 n}\le \frac{\eps|U|}{6\log^2 n},
\end{equation}
and, in particular, $|U\setminus B|\geq \frac{2}3 |U|$.

Then, by \Cref{prop:expansion-red-blue} applied to $U\setminus B$ and $F$ with $d$, using that $|F| \le s|U|/4 \le s|U \setminus B|/2$, we have either $|N_{G-F}(U\setminus B)|\geq \frac{s|U\setminus B|}{2d}$ or $|N_{G-F,d}(U\setminus B)|\geq \frac{\eps |U\setminus B|}{\log^2 n}$. As 
$$\frac{s|U\setminus B|}{2d}\geq \frac{s|U|}{3d}\geq |U|+2|U|\log^9 n,$$ 
the former contradicts \eqref{eqn:NGW}, since $|N_{G-F}(U\setminus C)| \ge |N_{G-F}(U\setminus B)|-|U|$. Therefore, we must have that $|N_{G-F,d}(U\setminus B)|\geq \frac{\eps |U\setminus B|}{\log^2n}$. Every vertex $v_i$ in $N_{G-F,d}(U\setminus B) \setminus B$ has at least $d$ neighbours in $G-F$ in $U\setminus B$ which, not being in $B=U' \cup C$, by definition of $U'$ must all have degree strictly less than $\Delta$ in $H$. This implies $v_i \in X$, since we could add it together with some $d$ of these neighbours. Hence, we must have $N_{G-F,d}(U\setminus B)\setminus B\subseteq X$, and 
\[
|X|\geq |N_{G-F,d}(U\setminus B)|-|B|\overset{\eqref{eqn:Bbound}}{\geq} \frac{\eps |U\setminus B|}{\log^2 n} - \frac{\eps|U|}{6\log^2 n} \geq  \frac{\eps|U|}{2\log^2 n},
\]
as required, where we have used that $|U\setminus B|\geq \frac{2}3 |U|$.
\end{proof}

\subsection{Almost decomposing an arbitrary graph into expanders}\label{sec:decomp-into-expanders}

The following lemma almost decomposes a graph into robust sublinear expanders with, on average, very little overlap between their vertex sets. Setting $s=0$ in the below lemma (as we do in one application) obtains a full decomposition, although without any robustness.

\begin{lemma}\label{splitting-into-expanders}
Given an $n$-vertex graph $G$, a non-negative integer $s$ and $\eps \le 2^{-5}$ we can delete up to $4sn \log n$ edges from $G$ so that the remaining edges may be partitioned into graphs $G_1,\ldots, G_r$ such that $\sum_{i=1}^r |G_i| \le 2n$ and each $G_i$ is an $(\eps,s)$-expander.   
\end{lemma}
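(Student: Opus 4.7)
The plan is to perform iterative ``cut-and-split'' steps on a collection $\mathcal{C}$, initialised at $\mathcal{C}=\{G\}$. While some piece $H\in\mathcal{C}$ is not an $(\eps,s)$-expander, the failure of \Cref{defn:robust-sublinear-expansion} supplies a witness $(U,F)$ with $U\subseteq V(H)$, $|U|\le 2|V(H)|/3$, $F\subseteq E(H)$, $|F|\le s|U|$ and $|W|:=|N_{H-F}(U)|<\eps|U|/\log^2|V(H)|$. I would delete the at most $s|U|$ edges of $F$ going between $U$ and $V(H)\setminus(U\cup W)$, and replace $H$ in $\mathcal{C}$ by the edge-disjoint subgraphs $H[U\cup W]$ and $H[V(H)\setminus U]$ (assigning the few edges inside $W$ to $H[U\cup W]$, say). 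Piece sizes strictly shrink, so the procedure terminates with every remaining piece an $(\eps,s)$-expander.

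To bound the deleted edges I would use the potential $\Phi=\sum_{H\in\mathcal{C}}|V(H)|\log|V(H)|$, starting at $n\log n$. Writing $m=|V(H)|$, the binary-entropy identity
\[
(|U|+|W|)\log(|U|+|W|)+(m-|U|)\log(m-|U|)-m\log m=(m+|W|)\log(m+|W|)-m\log m-(m+|W|)H\!\bigl(\tfrac{|U|+|W|}{m+|W|}\bigr)
\]
reduces $\Delta\Phi$ to an entropy term minus a small correction $\le|W|\log m+2|W|\le\eps|U|(\log m+2)/\log^2 m$. A short case analysis, separating $|U|\ge m/6$ (where the entropy exceeds a constant times $m$) from $|U|<m/6$ (where the entropy exceeds $|U|\log(m/|U|)\ge|U|$), uses $\eps\le 2^{-5}$ to absorb the correction and show $\Delta\Phi\le-|U|/4$ at every split. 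Summing gives $\sum_{\text{splits}}|U|\le 4n\log n$, hence total edges deleted are at most $s\sum|U|\le 4sn\log n$.

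For the vertex bound $\sum|V(G_i)|\le 2n$, since $\sum|V(G_i)|=n+\sum_{\text{splits}}|W|$ the task reduces to $\sum|W|\le n$. The plan rests on two structural facts: (a) $\eps\le 2^{-5}$ forces the small side to satisfy $|U|+|W|\le(2/3)(1+\eps/\log^2 m)m\le(3/4)m$, so each vertex $x$ can participate in at most $O(\log n)$ splits in which it lies in the small side $U\cup W$; and (b) any split producing $|W|\ge 1$ requires $|U|\ge\log^2 m/\eps$, and hence $m$ at least a fixed absolute constant $m_0=m_0(\eps)$, since for smaller pieces every admissible $U$ already satisfies the expansion condition vacuously. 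I would then charge each unit of overlap at a split $v$ to the $\ge\log^2 m(v)/\eps$ vertices forced into $U(v)$; combining this charge with the $3/4$-shrinkage budget from (a) and the bound $\sum|U|\le 4n\log n$ from the edge analysis, one obtains $\sum|W|\le\eps n\log n/\log^2 m_0$, which is bounded by $n$ once $m_0$ is chosen large enough in terms of $\eps$. Pieces of size below $m_0$ contribute no overlap at all.

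The main obstacle will be handling the ``unbalanced'' chains in the split tree, in which a given vertex repeatedly lies in $V(H)\setminus(U\cup W)$ so the piece shrinks only by $|U|$ at each step and the tree can have depth much larger than $\log n$. The idea is to decouple the slow shrinkage from the fast $3/4$-shrinkage on ``$U$/$W$'' steps and handle each contribution separately: the unbalanced contribution to $\sum|W|$ is controlled directly by $|W|\le\eps|U|/\log^2 m$ together with the budget $\sum|U|\le 4n\log n$ from the edge analysis, while the balanced contribution sums as a convergent geometric series over the $O(\log n)$ balanced depths. Putting the two together with an appropriate choice of $m_0$ closes the bound $\sum|W|\le n$ and completes the proof.
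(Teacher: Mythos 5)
Your iterative split is the same construction as the paper's induction, and the edge-count bookkeeping via the potential $\Phi=\sum_H|V(H)|\log|V(H)|$ works (it is essentially the paper's inductive bound $|E_i|\le 4sn_i\log n_i$, and the paper also gets its slack from $n_1\le\tfrac34m$ forcing $\log n_1\le\log m-\tfrac25$). The genuine gap is in the vertex bound $\sum|W|\le n$.

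Your charging argument does not close. You charge each unit of overlap $|W|$ to the $\ge\log^2 m/\eps$ vertices of $U$, which gives at most $\eps/\log^2 m\le\eps/\log^2 m_0$ per vertex per split; combining with the $O(\log n)$ bound on small-side participations you explicitly arrive at $\sum|W|\le\eps n\log n/\log^2 m_0$. But $m_0$ is \emph{not} a free parameter --- it is the threshold below which no admissible $U$ exists, so it is a fixed constant determined by $\eps$ (you say so yourself). With $\eps$ and $m_0$ both constants, $\eps n\log n/\log^2 m_0=\Theta(n\log n)$, which is not $\le n$ for large $n$; the phrase ``once $m_0$ is chosen large enough in terms of $\eps$'' has the dependence backwards. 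The final paragraph, where you try to separate the unbalanced contribution (handled the same way, so again $\Theta(n\log n)$) from a ``balanced geometric series,'' is not a proof and faces the same obstacle: the balanced splits are exactly the ones where $|W|$ can be largest ($|W|\approx\eps m/\log^2 m$), and nothing in the sketch caps the total number of pieces at each balanced depth.

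What is actually needed is a potential function for the \emph{vertex} count that decreases (or stays flat) at each split, in the same spirit as your edge potential. The paper uses
\[
\Psi \;=\; \sum_H\Bigl(2|V(H)| - \frac{2|V(H)|}{2+\log|V(H)|}\Bigr),
\]
initialised at $2n-\tfrac{2n}{2+\log n}\le 2n$. The correction term is tuned so that the small piece having size $n_1\le\tfrac34m$, and hence $\log n_1\le\log m-\tfrac25$, buys a gain of at least $\tfrac{n_1}{10\log^2 m}$, which dominates the overlap cost $\tfrac{\eps n_1}{\log^2 m}$ once $\eps\le 2^{-5}$. Your $\Phi=\sum m\log m$ is far too superlinear to give a linear-in-$n$ bound on $\sum_i|G_i|$, and a plain linear potential fails because the overlap is strictly added at each split; the paper's barely-sublinear correction is the key device you are missing.
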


\begin{proof}
We prove this by induction on $n$, under the stronger condition that the graphs $G_1,\ldots,G_r$ in the partition satisfy $\sum_{i=1}^r |G_i| \le 2n-\frac{2n}{2+\log n}$. Since $2n-\frac{2n}{2+\log n}\ge n,$ and any $1$-vertex graph $G$ is trivially an $(\eps,s)$-expander, the lemma holds for $n=1$ with $G_1=G$. Let us then assume $n \ge 2$ and that the claim holds for all graphs with at most $n-1$ vertices. 

Letting $G$ be an $n$-vertex graph, note that, as $2n-\frac{2n}{2+\log n}\ge n,$ if $G$ is an $(\eps,s)$-expander then the trivial partition of $G_1=G$ demonstrates the claim holds for $G$. Thus, we can assume $G$ is not an $(\eps,s)$-expander, and in particular, that there exists a non-empty set of vertices $U \subseteq V(G)$ with $|U| \le \frac23n$ and a set $F$ of at most $s|U|$ edges such that $|N_{G-F}(U)| < \frac{\eps |U|}{\log^2 n}.$ Let $G_1=G[U\cup N_{G-F}(U)]-F$ and let $G_2=G\setminus U-E(G_1)-F$, so that $G_1$ and $G_2$ form an edge partition of $G-F$ and, setting $n_1=|G_1|$ and $n_2=|G_2|$, we have
\begin{equation}\label{eqn:n1n2}
n_1+n_2=|G_1|+|G_2|=|G|+|N_{G-F}(U)| < n+\frac{\eps |U|}{\log^2 n}\le n+\frac{\eps n_1}{\log^2 n}.
\end{equation}

Now, $n_2 = n-|U|<n$ and 
\begin{equation}\label{eqn:n1}
n_1\leq |U|+\frac{\eps |U|}{\log^2 n}\leq \frac23n+\eps n\leq \frac34n<n,
\end{equation}
so there exist sets $E_1\subseteq E(G_1)$ and $E_2\subseteq E(G_2)$ and partitions $G_{1,1},\ldots,G_{1,r_1}$ and $G_{2,1},\ldots,G_{2,r_2}$ of $G_1-E_1$ and $G_2-E_2$ into edge disjoint $(\eps,s)$-expanders so that, for each $i\in [2]$, $|E_i|\leq 4sn_i\log n_i$, and
\[
\sum_{j=1}^{r_i}|G_{i,j}|\leq 2n_i-\frac{2n_i}{2+\log n_i}.
\]
Therefore, we can remove $F\cup E_1\cup E_2$ from $G$ and decompose the remaining edges into $(\eps,s)$-expanders  $G_{1,1},\ldots,G_{1,r_1}$, $G_{2,1},\ldots,G_{2,r_2}$. We need then only check that $|F\cup E_1\cup E_2|\leq 4sn\log n$ and that the sum of the vertices of the expanders in this decomposition is at most $2n-\frac{2n}{2+\log n}$.

Firstly, note that, from \eqref{eqn:n1}, we have $\log n_1\leq \log \frac34n<\log n-\frac 25$, so that
\begin{align}
   \frac{1}{s}(|F|+|E_1|+|E_2|)&\leq  |U|+4n_1\log n_1+4n_2\log n_2
   \leq n_1+4n_1\left(\log n-\frac{2}{5}\right)+4n_2\log n \nonumber\\
   & = 4(n_1+n_2)\log n -\frac{3}{5}n_1 \overset{\eqref{eqn:n1n2}}{\leq} 4\left(n+\frac{\eps n_1}{\log^2 n}\right)\log n -\frac{3}{5}n_1 \nonumber\\
   & \leq 4n\log n.\label{eqn:EEE}
\end{align}

Secondly, again as $ \log \frac34n<\log n-\frac 25$,  we have
\begin{equation}\label{eqn:n1logs}
\frac{2n_1}{2+\log n_1} \overset{\eqref{eqn:n1}}{\ge} \frac{2n_1}{2+\log (3n/4)} \ge \frac{2n_1}{8/5+\log n}=\frac{2n_1}{2+\log n}+\frac{2n_1\cdot 2/5}{(8/5+\log n)(2+\log n)} > \frac{2n_1}{2+\log n}+\frac{n_1}{10\log^2 n},
\end{equation}
so that
\begin{align*}
\sum_{i=1}^2\sum_{j=1}^{r_i}|G_{i,j}|&\leq 2n_1+2n_2-\frac{2n_1}{2+\log n_1}-\frac{2n_2}{2+\log n_2}\overset{\eqref{eqn:n1logs}}{<} (n_1+n_2)\left(2-\frac{2}{2+\log n}\right)-\frac{n_1}{10\log^2 n} \\
&\overset{\eqref{eqn:n1n2}}{\leq} \left(n+\frac{\eps n_1}{\log^2 n}\right)\left(2-\frac{2}{2+\log n}\right)-\frac{n_1}{10\log^2 n}\leq 2n-\frac{2n}{2+\log n}.
\end{align*}
In combination with \eqref{eqn:EEE}, this shows that $G$ has the required decomposition, completing the inductive step and hence the proof.
\end{proof}


\subsection{Decomposing an expander into many expanders}\label{sec:edge-partition-of-expanders}

The following lemma partitions the edges of an expander into a chosen number of expanders with the same vertex set (and a slightly weaker expansion condition). Our expanders have two parameters, and it is key that this splitting is particularly efficient with the first parameter (as further discussed at the start of Section~\ref{sec:connectinexpander}) -- that is, we will split an $(\eps,s)$-expander into polylogarithmically many $(\eps/4,s')$-expanders for some appropriate $s'$, so that, while $s$ will be reduced polylogarithmically to get $s'$, the first parameter, $\eps$, is only reduced to $\eps/4$.

\begin{lemma}\label{lem:partitionedgesintoexpanders} Let $n,k,s\in \N$ and $0<\eps\leq 1$.
Suppose that $G$ is an $n$-vertex $(\eps,s)$-expander and $s\geq 2^{12}\eps^{-1} k^2\log^4n$. Then, there are edge disjoint graphs $G_1,\ldots,G_k$ such that $E(G)=\bigcup_{i\in [k]}E(G_i)$ and, for each $i\in [k]$, $G_i$ is an $\left(\frac{\eps}4,\frac{\sqrt{s\eps}}{ 8k \log n}\right)$-expander  with vertex set $V(G)$.
\end{lemma}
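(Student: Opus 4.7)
The plan is to take a uniformly random partition of $E(G)$ into $k$ parts, assigning each edge to one of $G_1,\dots,G_k$ independently and uniformly, and to show that with positive probability every $G_i$ satisfies the required expansion via an application of \Cref{prop:expansion-red-blue} combined with Chernoff bounds exploiting independence across vertices.

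Set $d^\star:=4s'\log^2 n/\eps$. For any $F\subseteq E(G_i)$ with $|F|\le s'|U|$, at most $|F|/d^\star=\eps|U|/(4\log^2 n)$ vertices of $N_{G_i,d^\star}(U)$ can have all their (at least $d^\star$) $G_i$-edges to $U$ inside $F$, and at most $|F|\le s'|U|$ vertices of $N_{G_i}(U)$ can have all their $G_i$-edges to $U$ inside $F$ (each such kill costs $\ge 1$ edge). Hence, for fixed $U\subseteq V(G)$ with $|U|\le\tfrac{2n}{3}$ and $i\in[k]$, either
\begin{itemize}
\item[(I)] $|N_{G_i,d^\star}(U)|\ge\eps|U|/(2\log^2 n)$, or
\item[(II)] $|N_{G_i}(U)|\ge s'|U|+\eps|U|/(4\log^2 n)$
\end{itemize}
implies $|N_{G_i-F}(U)|\ge\eps|U|/(4\log^2 n)$ for every admissible $F$, giving the desired expansion.

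To establish (I) or (II), apply \Cref{prop:expansion-red-blue} to $G$ with $F=\emptyset$ and $d:=2kd^\star=\sqrt{s/\eps}\log n$ (which satisfies $d\le s$ by the hypothesis). In the ``high-degree'' alternative, $X:=N_{G,d}(U)$ has $|X|\ge\eps|U|/\log^2 n$ and every $v\in X$ has $d_v^G\ge 2kd^\star$, so $d_v^{G_i}\sim\bin(d_v^G,1/k)$ has mean $\ge 2d^\star$. The variables $(d_v^{G_i})_{v\in X}$ depend on disjoint sets of edges, so they are independent, and Chernoff gives $\P(d_v^{G_i}<d^\star)\le\exp(-d^\star/4)$ for each $v\in X$. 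A Chernoff bound on the sum of the independent Bernoullis $\mathbb{1}[d_v^{G_i}<d^\star]$ then shows (I) fails with probability at most $(4\exp(-d^\star/4))^{|X|/2}$. In the ``low-degree'' alternative, $|N_G(U)|\ge s|U|/(2d)=4ks'|U|$, and the events $\{v\in N_{G_i}(U)\}$ for $v\in N_G(U)$ are independent (disjoint edges) with probability $1-(1-1/k)^{d_v^G}\ge 1/(2k)$ each, so $\E|N_{G_i}(U)|\ge 2s'|U|$, and a standard Chernoff bound yields (II).

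The main technical obstacle is making the failure probabilities of (I) and (II) small enough to beat the $\binom{n}{u}\le n^u$ union bound factor over subsets of size $u$, times $k$ choices of $i$. The hypothesis $s\ge 2^{12}\eps^{-1}k^2\log^4 n$ is calibrated so that $d^\star\ge 32\log^3 n/\eps$ and $s'\ge 8\log n$; these make the Chernoff exponents $\Omega(u\log n)$ in both cases, yielding failure probability $n^{-\Omega(u)}$ per pair $(U,i)$. Summing over $u$ and $i\in[k]$ then gives total failure probability less than $1$, so the desired partition exists.
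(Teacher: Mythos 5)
Your proof is correct and takes essentially the same approach as the paper: random edge partition, applying \Cref{prop:expansion-red-blue} to $G$ with $F=\emptyset$ and the same threshold $d=\sqrt{s/\eps}\log n$ (your $d^\star$ equals the paper's $d/2k$), and Chernoff plus union bound over sets. The only cosmetic differences are that you union-bound over pairs $(U,i)$ rather than bounding the per-graph failure probability by $1/k$, and you phrase the case \textbf{b)} estimate as a Chernoff bound on a sum of independent Bernoullis where the paper phrases the same estimate as a binomial-coefficient count over subsets.
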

\begin{proof} 
If $n=1$, then the claim is trivially true, so let us assume that $n \ge 2$. Furthermore, observe that as $\eps>0$, we must have $s\leq \delta(G)$ for otherwise, we can remove all the neighbours of a vertex with minimum degree by removing at most $s$ edges, contradicting that $G$ is an $(\eps,s)$-expander, so certainly $s\leq n$.

Let $H$ be a random subgraph of $G$ with vertex set $V(G)$, which contains every edge independently with probability $\frac 1k$. Then, assign every edge of $G$ to one of the graphs $G_1,\ldots, G_k$ uniformly and independently at random, so that each $G_i$ is a random subgraph with the same distribution 
as $H$. Letting $s'=\frac{\sqrt{s\eps}}{ 8k \log n}\geq 8\log n$, we will show that the probability $H$ is not an 
$\left(\frac{\eps}4,s'\right)$-expander is strictly less than $\frac 1k$. Thus, by a union bound, the probability that each 
$G_i$ is an $\left(\frac{\eps}4,s'\right)$-expander is strictly positive, so some decomposition as required by the lemma must exist.

To show that $H$ is not an $\left(\frac{\eps}4,s'\right)$-expander with probability less than $\frac 1k$, we will take a union bound over all subsets $U$ of $V(G)$ for the event that $U$ fails the conditions of $(\frac{\eps}{4},s')$-expansion in $H$. For this, set $d=\sqrt{s/\eps} \log n$ and note that $\frac{s}{d}=8ks'$, $s'=\frac{\eps d}{8k \log^2 n}$ and $s\geq \sqrt{s}\cdot \sqrt{\eps^{-1}\log^2n}=d\geq 64 k$.

Let $U\subseteq V(G)$ and $u=|U|\leq \frac{2n}{3}$. By \Cref{prop:expansion-red-blue} with $d$, $U$ and $F=\emptyset$, we have either \textbf{a)} $|N_G(U)|\geq \frac{su}{2d}=4ks'u$ or \textbf{b)} $|N_{G,d}(U)|\geq \frac{\eps u}{\log^2 n}$. 

If \textbf{a)} holds, then $|N_H(U)|$ is dominated by $\text{Bin}(4ks'u,1/k)$, so that $\P(|N_H(U)|\geq 2s'u)\geq 1-e^{-s'u/2}$ by a Chernoff bound (\Cref{chernoff}). Note that if $|N_H(U)|\geq 2s'u$ then for any $F\subseteq E(H)$ with $|F|\leq s'u$ we have $|N_{H-F}(U)|\geq s'u\geq \frac{\eps u}{4\log^2n}$. Thus, when \textbf{a)} holds, the probability $U$ fails the $\left(\frac{\eps}4,s'\right)$-expansion condition is at most $e^{-s'u/2}\leq e^{-4u\log n}$ as $s'\geq 8\log n$.

If \textbf{b)} holds, then we have $N_{G,d}(U)\neq \emptyset$, so in particular $u=|U| \ge d$, which in turn implies $\frac{\eps u}{\log^2n}\ge \frac{\sqrt{s\eps}}{\log n}\ge 64k\log n$. Note now that the probability that any vertex $v\in N_{G,d}(U)$ is not in $N_{H,d/2k}(U)$ is at most $p:=\P\left(\text{Bin}\left(d,\frac1k\right)< \frac{d}{2k}\right)$, where we have once again by a Chernoff bound that
\begin{equation}\label{eqn:p}
    p\leq e^{-d/8k}\le \frac 14.
\end{equation}
Therefore, if we write $t=\frac{\eps u}{2\log^2 n}$, as $|N_{G,d}(U)|\geq \frac{\eps u}{\log^2 n}=2t$, we have
\begin{align}\label{eqn:NHU}
\P\left(|N_{H,d/2k}(U)|< \frac{\eps u}{2\log^2n}\right)&\le \binom{\ceil{2t}}{\ceil{2t}-\floor{t}}\cdot p^{\ceil{2t}-\floor{t}}  \le 2^{\ceil{2t}}\cdot p^{\ceil{2t}-\floor{t}}\le (4p)^t \overset{\eqref{eqn:p}}{\le} e^{\frac{-\eps u d}{32k\log^2 n}}= e^{-s'u/4}\notag\\&\le e^{-2u\log n}.
\end{align}
Note that, if $|N_{H,d/2k}(U)|\geq \frac{\eps u}{2\log^2n}$, then, for any $F\subseteq E(H)$ with $|F|\leq s'u$, we have 
\[
|N_{H-F}(U)|\geq |N_{H,d/2k}(U)|-\frac{|F|}{d/2k}\geq \frac{\eps u}{2\log^2n}-\frac{2ks'u}{d}=\frac{\eps u}{2\log^2n}-\frac{su}{4d^2}= \frac{\eps u}{4\log^2n}.
\]
Thus, \eqref{eqn:NHU} implies that the probability $U$ does not satisfy the $(\eps/4,s')$-expansion condition is at most $e^{-2u\log n}$.

Therefore, whichever of \textbf{a)} or \textbf{b)} holds, the probability $U$ does not satisfy the $(\eps/4,s')$-expansion condition is at most $e^{-2u\log n}$. Hence, the probability that $H$ is not an $(\eps/4,s')$-expander is at most
$$ \sum_{u=1}^{2n/3}\binom{n}{u}e^{-2u\log n} \le \sum_{u=1}^{2n/3} n^u\cdot n^{-2u} \le n^{-1}+\sum_{u=2}^{2n/3} n^{-2}\leq \frac{2}{n}< \frac{1}{k},$$
as required, where in the last inequality we make use of the observation that $n\geq s\geq 2^{12}\eps^{-1}k^2\log^4n>2k$.
\end{proof}


\section{Finding edge disjoint paths through random vertex sets}\label{sec:connectinexpander}

We will now show that a robust sublinear expander is not only well-connected (in the sense of Definition~\ref{defn:path-connected}, and as used in Section~\ref{sec:template}), but is likely to be well-connected through any large random vertex subset. That is, we prove the following result.

\begin{theorem}\label{thm:pathconnect} 
Let $G$ be an $n$-vertex $(\eps,s)$-expander with $1\ge \eps \ge 2^{-7}$ and $s 
\ge \log^{135} n$. Let $V\subseteq V(G)$ be a random subset chosen by including each vertex independently at random with probability $\frac13$. Then, with high probability, $G$ is $(4\log^5 n, 2^8\log^5 n)$-path connected through $V$. 
\end{theorem}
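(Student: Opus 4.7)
The plan is to verify the hypothesis of \Cref{lem:connectmanypairs} with $\ell = \log^4 n$ and $k = 2^8 \log^5 n$, so that $4\ell \log n = 4\log^5 n$ gives the desired path length and $k$ the pair multiplicity. Thus it suffices to show that, with high probability over the choice of $V$ (whose size is $\geq n/6$ with high probability by Chernoff), for every non-empty $U \subseteq V(G)$ and every $F \subseteq E(G)$ with $|F| \leq 2^{17}|U|\log^{15} n$, we have $|B^{\log^4 n}_{G-F}(U,V)| > |V|/2$.

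As the paper's sketch emphasises, the two fundamental obstacles are: (i) the family of bad $F$'s is too rich to union-bound directly, and (ii) robust sublinear expansion into a small random vertex set is strictly stronger than the raw expansion property in \Cref{defn:robust-sublinear-expansion}. To address (i), I would first apply \Cref{lem:partitionedgesintoexpanders} with some $t = \log^{C'} n$ to edge-partition $G$ into subgraphs $G_1,\ldots,G_t$, each an $(\eps/4,s')$-expander on $V(G)$ with $s' = \Theta(\sqrt{s\eps}/(t\log n))$ still polylogarithmic. Rather than invoking \Cref{lem:connectmanypairs} once on $G$, I would split an arbitrary input $\mathcal{P}$ into $t$ balanced subcollections $\mathcal{P}_j$ and find the paths for $\mathcal{P}_j$ inside $G_j$ via an Aharoni--Haxell matching step as in the proof of \Cref{lem:connectmanypairs}. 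Since the $G_j$ are edge-disjoint, paths from distinct $G_j$ are automatically edge-disjoint; and because the Aharoni--Haxell step now operates inside $G_j$, the forbidden edge set one must avoid lies in $E(G_j)$ and has size only $O(s'|U|)$, drastically shrinking the entropy to be union-bounded.

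The core task reduces to showing that, for each fixed $j$, with high probability every non-empty $U$ and every $F' \subseteq E(G_j)$ with $|F'| = O(s'|U|)$ satisfy $|B^{\log^4 n}_{G_j - F'}(U, V)| > |V|/2$. For a single $(U, F')$, \Cref{prop:red-blue-expansion-robust-both} applied in $G_j$ yields one of two outcomes: (a) a family of $|U|/\log^7 n$ vertex-disjoint stars centred in $U$ with $\log^9 n$ leaves each in $V(G)\setminus U$; or (b) a bipartite subgraph between $U$ and a set $X$ of size $\Omega(|U|/\log^2 n)$ with every $v\in X$ having at least $\log^4 n$ neighbours in $U$. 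In case (a), the random $V$ hits many leaves in most stars with failure probability doubly-exponentially small (Chernoff), easily absorbing the union bound over $(U, F')$. In case (b), each $v\in X$ retains many neighbours in $V$ with constant probability; Chernoff then shows $X$ contains $\Omega(|X|)$ vertices still robustly reachable from $U$ through $V$, again with exponentially small failure in $|U|$.

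The main obstacle is iterating this one-step expansion into a $\log^4 n$-step one without blowing up the union bound: after the first step, the current ball $B^i_{G_j-F'}(U,V)$ itself depends on $V$, defeating a naive union bound over static sets. Following the sprinkling idea recently used by Tomon~\cite{tomon2022robust}, I would expose $V$ in $T = \Theta(\log n)$ independent rounds $V^{(1)}, \ldots, V^{(T)}$ of individual density $\approx 1/(3T)$, performing one expansion step per round using only the fresh randomness of that round. This decouples each step's bad event from the prior ball, so the union bound at step $i$ involves only the static $(U, F')$ data. Each step boosts the ball by a factor $1 + \Omega(1/\log^2 n)$, so within $O(\log^3 n) \leq \log^4 n$ steps the ball exceeds $|V|/2$. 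The technical heart will be verifying that the per-step failure probabilities from \Cref{prop:red-blue-expansion-robust-both} remain exponentially small in $|U|\cdot \mathrm{polylog}(n)$ under the sprinkled density, tight enough to survive the simultaneous union bound over all $(U, F', i)$ with $|F'|$ up to the budget $O(s'|U|)$.
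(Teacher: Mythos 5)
Your high-level structure matches the paper's: partition $G$ into polylogarithmically many edge-disjoint $(\eps/4,s')$-expanders via \Cref{lem:partitionedgesintoexpanders}, sprinkle $V$, grow balls step by step using \Cref{prop:red-blue-expansion-robust-both}, then invoke a hypergraph-matching connection step. (The paper exploits the partition slightly differently from you — rather than splitting $\mathcal{P}$ across the $G_j$ and running Aharoni--Haxell separately in each, it applies \Cref{lem:connectmanypairs} once to $G$ and, for each pair $(U,F)$, uses pigeonhole to pick an index $i$ with $|F\cap E(G_i)|\le |U|/\log^{27}n$ and expands in that $G_i$; both versions serve the same purpose.)

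The serious gap is your claim that the single-step failure probability in cases (a) and (b) of \Cref{prop:red-blue-expansion-robust-both} is ``exponentially small in $|U|$'' or ``doubly-exponentially small.'' It is not, for a general $U$. In case (a), the set $C$ of star centres has size $|U|/\log^7 n$, and the failure to sample $\Theta(p|C|)$ of them into one sprinkle round (with $p\approx 1/\log^4 n$) is $e^{-\Omega(|U|/\log^{11} n)}$; in case (b), the martingale bound with $\Delta=2\log^9 n$ and $|X|=\Theta(|U|/\log^2 n)$ gives $e^{-\Omega(|U|/\log^{22} n)}$. Either of these is destroyed by the union bound over the $\binom{n}{|U|}\ge n^{|U|/2}$ choices of $U$ — already for $|U|=1$ the bound doesn't even converge. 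The paper's essential fix, which you have not discovered, is \Cref{prop:well-expanding-core}: every $U$ contains a ``well-expanding'' subset $U'$ with $|N_G(U')|\ge |U'|\log^{24}n$ and $|U'|\ge \eps|U|/(3\log^{26}n)$. One then takes the union bound only over well-expanding sets $U'$; for these, $|B_1|=|N_{G-F}(U')|\ge \frac12|U'|\log^{24}n$, which upgrades the per-step failure probability in both cases to $e^{-\Omega(|U'|\log^{2}n)}$, comfortably beating $n^{|U'|}$. Without this reduction the argument simply does not close.

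Two smaller points. First, $T=\Theta(\log n)$ sprinkling rounds is inconsistent with your own estimate that $O(\log^3 n)$ expansion steps are needed when the per-step growth is $1+\Omega(1/\log^2 n)$; the paper uses $\ell=\log^4 n$ rounds. Second, for the sprinkling to give any useful per-step growth rate at all, the ball $B_i$ must be defined as the set of vertices reachable from $U$ by a path of length $\le i$ whose \emph{interior} lies in $V_1\cup\dots\cup V_{i-1}$, with the endpoint $v$ \emph{not} required to be sampled; this is what makes $B_i$ depend only on earlier rounds while still growing by a factor independent of the sprinkle density. Your proposal says ``perform one expansion step per round'' without pinning this down, and a naive ball $B^i_{G-F}(U,V_1\cup\dots\cup V_i)$ would grow far too slowly.
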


The challenge of proving Theorem~\ref{thm:pathconnect} is discussed in Section~\ref{subsec:sketch}, and in particular in the answer to question \textbf{ii)} there. Having since then proved Lemma~\ref{lem:connectmanypairs}, let us note that, for suitable polylogarithmic parameters $\bar{s}$ and $\ell$, by this lemma, to prove Theorem~\ref{thm:pathconnect} it is sufficient to show that, with high probability,

\vspace{-0.4cm}

\begin{enumerate}[label = $(\dagger)$]\item  $|B^\ell_{G-F}(U,V)|> \frac{|V|}2$ for each $U\subseteq V(G)$ and $F\subseteq E(G)$ with $|F|\leq \bar{s}|U|$.\label{youdag} 
\end{enumerate}

\vspace{-0.4cm}

Here, let us remind the reader that given $U,V \subseteq V(G)$, we denote by $B^{i}_G(U,V)$ the set of vertices in $V$ which can be reached by a path through $V$ (with all internal vertices in $V$) of length at most $i$ starting from a vertex in $U$.

To further motivate our approach in this section, let us recap some key aspects discussed in Section~\ref{subsec:sketch} with the parameters we have now introduced. As our condition is that $G$ is an $n$-vertex $(\eps,s)$-expander, we could have sets $U\subseteq V(G)$ with $|N_{G}(U)|=\eps|U|/\log^2n$, so that, with probability $(2/3)^{\eps|U|/\log^2n}$, we may have $N_G(U) \cap V=\emptyset$, so that \ref{youdag} does not even hold for $U$ with $F=\emptyset$. Thus, if we take a union bound approach to \ref{youdag}, we could reasonably only hope to bound this over $\exp(O(\eps u/\log^2n))$ pairs $(U,F)$ with $|U|=u$ (for any relevant $u$). 
As discussed in Section~\ref{subsec:sketch}, we solve this in two ways. Firstly, for our union bound, we consider for \ref{youdag} only sets $U$, which expand particularly well, much more than the lower bound from the expansion condition. By showing that a general set contains a well-expanding set that is not much smaller, we will then be able to deduce that all sets satisfy at least some weaker form of \ref{youdag}. Secondly, we randomly partition the edges of $G$ into expanders $G_1,\ldots,G_k$, for some appropriate polylogarithmic value of $k$, each of which will be an $(\eps/4,s')$-expander for some appropriate $s'$. As the first parameter in the expansion ($\eps$) has only reduced by a factor of 4, and this is the parameter that appears in the number of pairs for which we can hope to take a union bound, there is little cost to looking at expansion only within any given subgraph $G_i$. Then, when we want to expand, avoiding some larger set of edges $F$, with $|F|=\bar{s}|U|$ as in \ref{youdag}, we can find the smallest set of edges $F\cap E(G_i)$, and expand $U$ in $G_i$ while only avoiding a relatively much smaller set of edges $F\cap E(G_i)$. Thus, for our union bound, we only need to consider these smaller sets of edges, and this plays a crucial role in reducing the number of pairs $(U,F)$ that we need to consider when applying the union bound.

We do this all in three stages. Firstly, in Section~\ref{sec:expand1}, we show that for any set $U\subseteq V(G)$ with $|N_G(U)|\geq |U|\log^{24}n$ and any $F\subseteq E(G)$ with $|F|\leq |U|$ we have $|B^\ell_{G-F}(U,V)| > \frac{|V|}2$ holds with probability $1-\exp(-\Omega(|U|\log^{2}n))$. That is, we show the desired property holds for a \emph{well-expanding} set $U$ with high enough probability that we can take a union bound over all well-expanding sets $U$ (and any edge set $F$ with, say, $|F|\leq |U|$) to get that this property holds for all well-expanding sets with probability $1-o(1/n)$. As discussed in Section~\ref{subsec:sketch}, if the set $U$ does not expand well, then we can not guarantee it has the property we want with high enough probability for a union bound over all sets $U$, so we only consider well-expanding sets $U$ here. Then, in Section~\ref{sec:expand2}, we show that every set $U$ in our expander contains a well-expanding set $U'$ which is not that much smaller than $U$ (see \Cref{prop:well-expanding-core}), indeed, we will find such a $U'$ satisfying $|U'|\geq \mu|U|$, where $\mu\geq 1/\log^{27}n$. Thus, for any edge set $F$ with $|F|\leq \mu|U|\leq |U'|$ we have $|B^\ell_{G-F}(U,V)|\geq |B^\ell_{G-F}(U',V)| > \frac{|V|}2$. This is almost the condition we need to apply Lemma~\ref{lem:connectmanypairs}. However, $\mu$ is too small, namely by a polylogarithmic factor smaller than the value of $\bar{s}$ that we need. Therefore, in Section~\ref{subsec:well-connectivity-weak-expander}, we find the stronger expansion property we need by first splitting an $(\eps,s)$-expander into polylogarithmically many edge disjoint expanders via \Cref{lem:partitionedgesintoexpanders}, before showing that with high probability each one of these has the above-mentioned weaker expansion property into $V$. Finally, we combine these properties to show that, indeed, $G$ has the desired stronger expansion property into $V$. This allows us to apply Lemma~\ref{lem:connectmanypairs}, completing the proof of Theorem~\ref{thm:pathconnect}.


\subsection{Expansion of well-expanding sets into a random vertex set}\label{sec:expand1}

In an $n$-vertex $(\eps,s)$-expander $G$, given $U\subseteq V(G)$ such that $|N_{G}(U)|\geq |U|\log^{24}n$ and $F\subseteq E(G)$ with $|F|\leq s|U|/4$,  when $V\subseteq V(G)$ is chosen by selecting each vertex independently at random with probability $\frac 13$, we wish to show that, with some large probability, we have $|B^\ell_{G-F}(U,V)|>\frac{|V|}2$, for some appropriate parameters $\eps,s$ and $\ell$ (thus proving Lemma~\ref{lem:expandintorandom} below). To prove this we will adapt a `sprinkling' argument from a very recent work of Tomon~\cite{tomon2022robust}, thus avoiding a much more complex argument from initial versions of this work. To prove this we will reveal the vertices in $V$ in $\ell$ batches, using the so-called sprinkling method, by partitioning $V$ randomly into sets $V_1\cup\ldots\cup V_\ell$, weighted so that most of the vertices in $V$ are likely to be in $V_\ell$. A natural approach here would be to prove a likely bound on $B^i_{G-F}(U,V_1\cup\ldots \cup V_i)$ for each $i\in [\ell]$, resulting in a bound on $B^\ell_{G-F}(U,V_1\cup\ldots \cup V_\ell)=B^\ell_{G-F}(U,V)$, so let us emphasise that this is \emph{not} what we do.

Instead, for $0\leq i\leq \ell$ we track the size of sets $B_i$ which are defined as the set of vertices $v \in V(G)$ which can be reached from $U$ by a path all of whose internal vertices are in $V_1 \cup \ldots \cup V_{i-1}$ (i.e., a path through this set) of length at most $i$. It is \emph{crucial} here that we do not insist $v$ belongs to either $U$ or $V_1\cup \ldots \cup V_i$. One can think of vertices in $B_i$ as having potential of making all their neighbours in $G-F$ reachable in the same way (so being in $B_{i+1}$) if they get sampled into our next random subset $V_{i}$. In particular, every vertex with a neighbour in $B_i$ which gets sampled into $V_{i}$ will join $B_{i+1}$. This combined with our notion of robust expansion (as discussed further below) allows us to show that it is likely that $B_{i+1}$ will increase in size compared to $B_i$ until for some $i\le \ell-1$ its size is at least $\frac23 n$. In particular, we will have $B_{\ell} \ge \frac23 n$. The final stage is slightly different, here since $B_{\ell}$ is independent of our final random set $V_{\ell}$ we will likely have almost $\frac23$ of the vertices of $V_\ell$ belonging to $B_{\ell}$. As our random sets are weighted heavily towards $V_\ell$, it is likely to contain more than $\frac34$ of the vertices of $V$ so that we will likely have $|V_\ell\cap B_\ell|>\frac{|V|}2$, so that, finally, we have
\[
|B^\ell_{G-F}(U,V)|\geq |V_\ell\cap B_\ell|>\frac{|V|}2,
\]
as required.

That the sets $B_i$, $1\leq i\leq \ell-1$, are very likely to increase notably in size will follow from our notion of robust expansion (as proved in the claim below).
In particular, at step $i$, \Cref{prop:red-blue-expansion-robust-both} tells us that one of two cases \textbf{a)} or \textbf{b)} may occur.

\textbf{a)} $B_i$ has many large vertex disjoint stars extending from $B_i$. In this case we use that, for each centre sampled into $V_i$, the (many) corresponding leaves are added to $B_{i+1}$. We will have that many more leaves are added for each successful centre than the sampling probability for $V_i$, so that this is a good increase in size.

\textbf{b)} $B_i$ has a large robust neighbourhood whose vertices have many neighbours in $G-F$ in $B_i$. Each vertex in this robust neighbourhood is likely to have at least one of these neighbours in $B_i$ sampled into $V_i$, whereupon it will then be in $B_{i+1}$. (In fact, we need a slightly stronger property to hold so that the sampling of each vertex in $B_i$ does not have too strong an influence on the size of $B_{i+1}\setminus B_i$, which is why we use the subgraph $H$ provided by case \textbf{b)} of \Cref{prop:red-blue-expansion-robust-both}.)

Thus, in either case, $|B_i|$ is likely to increase.

\begin{lemma}\label{lem:expandintorandom} Let $n \ge 2.$ Suppose that $G$ is an $n$-vertex $(\eps,s)$-expander with $2^{-9}\leq \eps \leq 1$ and $s \ge 8\log^{13} n$. Let $U\subseteq V(G)$  satisfy $|N_{G}(U)|\geq |U|\log^{24}n$ and let $F\subseteq E(G)$ satisfy $|F|\leq {|U|}$. Let $V\subseteq V(G)$ be a random subset chosen by including each vertex independently at random with probability $\frac13$. 

Then, with probability $1-e^{-\Omega\left(|U|\log^{2}n\right)}$,
\begin{equation}\label{expand}
|B^{\log^4n}_{G-F}(U, V)|>\frac{|V|}{2}.
\end{equation}
\end{lemma}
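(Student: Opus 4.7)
The plan is to implement the sprinkling strategy described in the preamble. Partition $V$ as $V_1 \cup \cdots \cup V_\ell$ with $\ell := \log^4 n$, placing each vertex independently into $V_i$ with probability $q=\Theta(1/\log^4 n)$ for $i<\ell$ and into $V_\ell$ with probability $p_\ell := \tfrac13 - (\ell-1)q$, choosing $q$ small enough that $p_\ell>\tfrac14$ while $q\log^4 n$ remains a positive constant. For $0\le i\le\ell$, let $B_i$ be the set of $v\in V(G)$ reachable from $U$ in $G-F$ by a path of length at most $i$ whose internal vertices lie in $V_1\cup\cdots\cup V_{i-1}$. Note that $B_i$ is determined by $V_1,\ldots,V_{i-1}$ alone, and I crucially do \emph{not} require the endpoint to lie in $V$. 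Since $B_\ell\cap V\subseteq B^\ell_{G-F}(U,V)$, it suffices to prove $|B_\ell\cap V|>|V|/2$ with probability $1-e^{-\Omega(|U|\log^2 n)}$.

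Deterministically, $|B_1|\ge|N_{G-F}(U)|\ge|N_G(U)|-|F|\ge\tfrac12|U|\log^{24}n$. For $i=1,\ldots,\ell-1$ I iterate the following growth step: if $|B_i|>\tfrac23n$ there is nothing to do, otherwise apply \Cref{prop:red-blue-expansion-robust-both} to $B_i$ with our edge set $F$ (the hypothesis $|F|\le s|B_i|/4$ is immediate from $|F|\le|U|\le|B_i|$ and $s\ge 8\log^{13}n$). Case \textbf{a)} yields $|B_i|/\log^7 n$ vertex disjoint stars in $G-F$ with centres in $B_i$ and $\log^9 n$ leaves each outside $B_i$; every star whose centre lands in $V_i$ forces all its leaves into $B_{i+1}\setminus B_i$, and a Chernoff estimate on the binomial number of sampled centres yields $|B_{i+1}|\ge(1+\Omega(1/\log^2 n))|B_i|$ with error $\exp(-\Omega(|B_i|/\log^{11}n))$. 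Case \textbf{b)} yields a bipartite subgraph $H$ with parts $B_i$ and $X\subseteq V(G)\setminus B_i$, $|X|\ge\eps|B_i|/(2\log^2 n)$, in which every $x\in X$ has at least $\log^4 n$ neighbours in $B_i$ and every $v\in B_i$ has at most $2\log^9 n$ neighbours in $X$. Writing $Y$ for the number of $x\in X$ with some $H$-neighbour in $V_1\cup\cdots\cup V_i$, we have $Y\le|X\cap B_{i+1}|$, and the choice of $q$ makes $(1-q)^{\log^4 n}$ bounded away from $1$, yielding $\E[Y\mid V_1,\ldots,V_{i-1}]=\Omega(|X|)$. Moreover $Y$ is $(2\log^9 n)$-Lipschitz in the indicators $\mathbf{1}[v\in V_i]$ for $v\in B_i$, so McDiarmid's inequality (\Cref{lem:mcd}) gives $Y=\Omega(|X|)$, and hence $|B_{i+1}|\ge(1+\Omega(1/\log^2 n))|B_i|$, with error $\exp(-\Omega(|B_i|/\log^{22}n))$.

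Using $|B_i|\ge|B_1|\ge\tfrac12|U|\log^{24}n$, both error probabilities per step are at most $\exp(-\Omega(|U|\log^2 n))$, and a union bound over the $\ell-1=O(\log^4 n)$ growth steps keeps the total failure probability at $\exp(-\Omega(|U|\log^2 n))$. On the success event, either some $|B_i|$ with $i\le\ell$ already exceeds $\tfrac23n$, or iterated growth forces $|B_\ell|\ge|B_1|\cdot e^{\Omega(\log^2 n)}\gg n$; either way $|B_\ell|\ge\tfrac23n$. Finally, $V_\ell$ is independent of $B_\ell$, and with $p_\ell>\tfrac14$ and $|B_\ell|\ge\tfrac23n$, Chernoff bounds on $|V_\ell\cap B_\ell|$ (mean $\ge p_\ell\cdot\tfrac23n>\tfrac16 n$) and on $|V|$ (mean $\tfrac13 n$) give $|V_\ell\cap B_\ell|>|V|/2$ with error $e^{-\Omega(n)}$. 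Since $V_\ell\cap B_\ell\subseteq V\cap B_\ell$, this completes the proof.

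I expect the main technical obstacle to be the concentration in case \textbf{b)}: a sum of dependent indicators must be sharply concentrated below its mean, and it is precisely the maximum-degree bound on the $B_i$-side of $H$ provided by \Cref{prop:red-blue-expansion-robust-both} (a feature engineered for exactly this purpose) that keeps the Lipschitz constant small enough for McDiarmid to deliver the $\exp(-\Omega(|U|\log^2 n))$ tail we need.
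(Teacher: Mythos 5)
Your proposal is, in its core structure, exactly the paper's proof: sprinkle $V$ over $\ell=\log^4 n$ rounds, track the sets $B_i$ of vertices reachable by paths with interior in $V_1\cup\cdots\cup V_{i-1}$, use \Cref{prop:red-blue-expansion-robust-both} at each step, handle case \textbf{a)} with Chernoff and case \textbf{b)} with McDiarmid via the degree cap on the $B_i$-side, and finish by intersecting $B_\ell$ with the last (heaviest) round.

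There is, however, one genuine (though repairable) error introduced by your choice of sprinkling scheme. You take a \emph{disjoint} partition $V=V_1\cup\cdots\cup V_\ell$, with each vertex assigned one label (or none) and $p_\ell=\tfrac13-(\ell-1)q$. Under this scheme the sets $V_1,\ldots,V_\ell$ are \emph{not} independent of one another, so the statement ``$V_\ell$ is independent of $B_\ell$'' in your final step is false: $B_\ell$ is a function of $V_1,\ldots,V_{\ell-1}$, and conditioning on those forces $V_\ell$ to avoid $V_1\cup\cdots\cup V_{\ell-1}$. The naive Chernoff computation ``mean $\ge p_\ell\cdot\tfrac23 n$'' therefore needs justification. (For the same reason, in case \textbf{a)} the number of centres landing in $V_i$ is not an unconditional binomial; you implicitly handle this in case \textbf{b)} by looking at neighbours in $V_1\cup\cdots\cup V_i$, but you do not do the analogous thing in case \textbf{a)} or in the final step.) The paper sidesteps all of this by letting the $V_i$ be \emph{independent} random subsets that are allowed to overlap, with per-vertex inclusion probability $p$ for $i<\ell$ and $q$ for $i=\ell$ chosen so that $1-(1-p)^{\ell-1}(1-q)=\tfrac13$; then $V_\ell$ really is independent of $B_\ell$ and each growth step sees an honest product measure. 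You can either switch to that scheme, or keep your partition and argue conditionally throughout: for unassigned vertices the conditional inclusion probability in $V_i$ is $q/(1-(i-1)q)\ge q$, already-assigned vertices of $B_i$ already have their neighbours in $B_{i+1}$ for free, and at the end one must also control $|V_1\cup\cdots\cup V_{\ell-1}|$ (which is $O(n/\log^4 n\cdot\log^4 n)=O(n)$ but with small constant) to compare $|V\cap B_\ell|$ with $|V|/2$. Either fix is routine, but as written the final independence claim does not hold.
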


\begin{proof} Let $\ell=\log^4n$, $q=\frac3{11}$ and let $p$ be such that $1-(1-p)^{\ell-1}(1-q)=\frac{1}{3}$, i.e., that $(1-p)^{\ell-1}=\frac{11}{12}$, so that
\begin{equation}\label{eqn:p15}
p\ge \frac{1}{15\log ^4 n}.
\end{equation}
Let $G$ be an $n$-vertex $(\eps,s)$-expander, $U\subseteq V(G)$ with $|N_G(U)| \ge |U|\log^{24} n$ and $F\subseteq E(G)$ with $|F|\leq {|U|}$. Independently, for each  $i\in \{1,\ldots,\ell\}$, let $V_i$ be a random subset of $V(G)$ with each vertex included independently at random with probability $p$ if $i\leq \ell-1$ and with probability $q$ if $i=\ell$. Set $V=V_1\cup\ldots \cup V_\ell$, and note that each vertex is included in $V$ independently at random with probability $\frac13$. Thus, we wish to show that, with  probability at least  $1-e^{-\Omega\left({|U|}{\log^{2}n}\right)}$ we have $|B^{\ell}_{G-F}(U , V)|>\frac{|V|}{2}$.

For each $0\le i\leq \ell$, let $B_{i}$ be the set of vertices of $G$ which can be reached via a path in $G-F$ which starts in $U$ and has length at most $i$ and whose internal vertices (if there are any) are in $V_1\cup \ldots \cup V_{i-1}$. In particular, then, we have $B_0=U$ and $B_1=B_{G-F}(U)$. 
Observe also that $B_0\subseteq B_1\subseteq \ldots \subseteq B_{\ell}$. We emphasise that the vertices of $B_{i}$ do not have to themselves be inside $V_1\cup \ldots \cup V_{i-1}$, only the interior vertices of some path from $U$ to the vertex in $B_{i}$ are required to be inside $V_1\cup \ldots \cup V_{i-1}$. An important property of $B_{i}$ is that it is completely determined by the sets $U,V_1,\ldots, V_{i-1}$, so is in particular independent of $V_{i}$. Note also that any vertex in $N_{G-F}(B_i)$ with a neighbour in $B_i$ that gets sampled into $V_i$ belongs to $B_{i+1}$. These two observations will be the key behind why the sets $B_{i+1}$ will grow in size until they occupy most of the set $V(G)$. In particular, finally, observe that
\begin{equation}\label{eqn:overkill4}
B_\ell\cap V_\ell\subseteq B^{\ell}_{G-F}(U, V).
\end{equation}

We now show that indeed, for each $1\le i\le \ell-1$, that, unless $B_{i}$ is already very large, $B_{i+1}$ is likely to be larger than $B_i$, as follows.

\begin{claim*} For each $1\le i\le \ell-1$, with probability $1-e^{-\Omega\left({|U|}{\log^{2}n}\right)}$, either $|B_i| \ge \frac23 n$, or  $$|B_{i+1}\setminus B_i| \ge \frac{\eps|B_i|}{2^6\log^{2} n}.$$
\end{claim*}
\begin{proof} For each $v\in N_{G-F}(B_i)$, $v$ is in $B_{i+1}$ if at least one of its neighbours in $G-F$ in $B_i$ gets sampled into $V_{i}$. That is,
\begin{equation}\label{eqn:overkill1}
\{v\in N_{G-F}(B_i):N_{G-F}(v,B_i)\cap V_i\neq\emptyset\} \subseteq B_{i+1}\setminus B_i. 
\end{equation}
We will show that, for any set $W\subseteq V(G)$ with $|W|\leq \frac23 n$ and $B_1\subseteq W$ 
\begin{equation}\label{eqn:overkill2}
\P\left(|\{v\in N_{G-F}(W):N_{G-F}(v,W)\cap V_i\neq\emptyset\}|\geq \frac{\eps|W|}{2^6\log^{2} n}\right)\ge 1-e^{-\Omega\left({|B_1|}/{\log^{22} n}\right)}.
\end{equation}
Thus, we will have for all $1\le i \le \ell -1$
\begin{align*}
\P\left(|B_i| \ge \frac23 n  \: \text{ or  } \: |B_{i+1}\setminus B_i| \ge \frac{\eps|B_i|}{2^6\log^{2} n}\right) &\overset{\textcolor{white}{\eqref{eqn:overkill1}}}{\geq} \P\left(|B_{i+1}\setminus B_i| \ge \frac{\eps|B_i|}{2^6\log^{2} n}\: \big| \: |B_i|\leq \frac{2}{3}n\right)
\\
&\overset{\eqref{eqn:overkill1}}{\geq} 
\P\left(|\{v\in N_{G-F}(B_i):N_{G-F}(v,B_i)\cap V_i\neq\emptyset\}|\ge \frac{\eps|B_i|}{2^6\log^{2} n}\: \big| \: |B_i|\leq \frac{2}{3}n\right)\\
&\overset{\eqref{eqn:overkill2}}{\ge} 
1-e^{-\Omega\left({|B_1|}/{\log^{22} n}\right)}\ge 1-e^{-\Omega\left({|U|}{\log^{2} n}\right)},
\end{align*}
where in the last inequality we used $|B_1|=|B_{G-F}(U)|\ge |U|\log^{24} n-|F|\geq \frac12|U|\log^{24}n$.

Let then $W\subseteq V(G)$ with $|W|\leq \frac23n$ and $B_1\subseteq W$. As $|W|\leq \frac23 n$, and $|F|\le |U|\leq |B_1|\leq |W|\le {s|W|}/4$ we can apply \Cref{prop:red-blue-expansion-robust-both} to $W$ and $F$ to show one of two cases \textbf{a)} or \textbf{b)} holds and we will show that \eqref{eqn:overkill2} holds in either case.

\textbf{a)} Suppose $G-F$ contains $\frac{|W|}{\log^{7} n}$ vertex disjoint stars with $\log^{9} n$ leaves, with the centre in $W$ and all leaves in $N_{G-F}(W)$. Let $C\subseteq W$ be the set of centres of such a collection of stars, and note that
\begin{equation}\label{eqn:overkill3}
|\{v\in N_{G-F}(W):N_{G-F}(v,W)\cap V_i\neq \emptyset\}|\geq |C\cap V_i|\log^9n.
\end{equation}
By a Chernoff bound (\Cref{chernoff}) and \eqref{eqn:p15}, with probability  at least 
$1-e^{-p|C|/8}= 1-e^{-\Omega\left({|W|}/{\log^{11}n}\right)}$, we have $|C\cap V_i|\geq \frac{p|C|}2\geq \frac{|W|}{2^6\log^{11}n}$. Thus, in combination with \eqref{eqn:overkill3}, we have that \eqref{eqn:overkill2} holds as $\eps\leq 1$.

\textbf{b)} Suppose instead that there is a bipartite subgraph $H\subseteq G-F$ with vertex classes $W$ and $X$ such that
    \begin{itemize}
        \item $|X| \ge \frac{\eps|W|}{2\log^2 n}$ and
        \item every vertex in $X$ has degree at least $\log^{4} n$ in $H$ and every vertex of $U$ has degree at most $\Delta:=2\log^{9} n$ in $H.$
    \end{itemize}  
    For each $v\in X$, the probability that $v$ has no neighbours in $H$ in $V_i$ is at most
    \[
    (1-p)^{\log^4n}\le e^{-p\log^4n}\overset{\eqref{eqn:p15}}{\leq} e^{-1/15}\leq \frac{15}{16}.
    \]
Let $Y$ be the random variable counting the number of vertices of $X$ having a neighbour in $V_{i}$ in $H$, so that $\E Y \ge \frac{|X|}{16}$. Observe also that $Y$ is $\Delta$-Lipschitz since for each $v\in W$ the event $\{v\in V_i\}$ affects $Y$ by at most $d_H(v)\leq \Delta$. 
Hence, by \Cref{lem:mcd} with $k=\Delta$, $t=\frac{|X|}{32}$ and $N=|W|$, we have
$$\P\left(Y < \frac{|X|}{32}\right)\le \P\left(Y < \E Y - \frac{|X|}{32} \right)\le2\exp\left(-\frac{2^{-9}|X|^2}{\Delta^2|W|}\right)= e^{-\Omega\left({|W|}/{\log^{22} n}\right)}. $$
Each vertex in $X$ with a neighbour in $V_i$ in $H$ lies in $\{v\in N_{G-F}(W):N_{G-F}(v,W)\cap V_i\neq\emptyset\}$, so therefore, with probability at least $1-e^{-\Omega\left({|W|}/{\log^{22} n}\right)}$, we have $|\{v\in N_{G-F}(W):N_{G-F}(v,W)\cap V_i\neq\emptyset\}|\geq Y\geq \frac{|X|}{32}\geq \frac{\eps|W|}{2^6\log^2 n}$ and thus \eqref{eqn:overkill2} holds as well in case \textbf{b)}, completing the proof.
\end{proof}

As $B_\ell$ and $V_\ell$ are independent and $q=\frac3{11}$ (so that $\frac{2q}{3}=\frac{2}{11}>\frac{4}{23}$), by Chernoff's bound (\Cref{chernoff}), we have that
\[
\P\left(|B_{\ell}\cap V_\ell|\le\frac{4}{23}n \: \big| \: |B_{\ell}|\geq \frac{2}3n\right)\leq \P\left(\bin\left(\frac{2}3n,q\right)\leq \frac{4}{23}n\right)\le e^{-\Theta(n)},
\]
and, similarly, as $\frac13<\frac8{23}$ we have $\P\left(|V|\geq \frac{8}{23}n\right)\le e^{-\Theta(n)}$.

Thus, by the claim, we have in total that

\begin{enumerate}[label=\roman*)]
\item \label{itm1}
for each $i\in [\ell-1]$, $|B_i| \ge \frac23 n$ or  $|B_{i+1}\setminus B_i| \ge \frac{\eps|B_i|}{2^6\log^{2} n}$, and

 \item $|B_\ell|<\frac23 n$ or $|B_\ell\cap V_\ell|>\frac{4}{23}n$, and

\item \label{itm3} $|V|\leq \frac{8}{23}n$
\end{enumerate}

with probability at least
$$1-\log^4n\cdot e^{-\Omega\left({|U|}{\log^{2} n}\right)}-2^{-\Theta(n)}\ge 1-e^{-\Omega\left({|U|}{\log^{2} n}\right)}.$$

However, if \ref{itm1}--\ref{itm3} all hold, then, for each $i\in [\ell-1]$, we have
$$|B_i| \ge \min\left\{\frac23 n,\left(1+\frac{\eps}{2^6\log^2 n}\right)^{i}|U|\right\}\geq \min\left\{\frac23 n,\exp\left(\frac{\eps i}{2^7\log^2 n}\right)\right\},$$
so that, setting $i=\ell=\log^4n$, we conclude $|B_{\ell}|\geq \frac23 n$, and hence, by ii) and iii), that $|B_\ell\cap V_\ell|>\frac{|V|}2$.

Thus, by \eqref{eqn:overkill4}, we have that $|B^{\ell}_{G-F}(U,V)|>\frac{|V|}2$ with probability at least $1-e^{-\Omega\left({|U|}{\log^{2} n}\right)}$.
\end{proof}


\subsection{Expansion into a random vertex set}\label{sec:expand2}

Having picked $V\subseteq V(G)$ with vertex probability $\frac13$ in an $n$-vertex $(\eps,s)$-expander $G$, \Cref{lem:expandintorandom} tells us that for any \emph{fixed}, well-expanding subset of vertices $U$ and small set $F$ of edges we can reach more than one half of the vertices of $V$ by short paths through $V$ in $G-F$ with pretty high probability. 
We now want to use this to show that a similar property holds \emph{simultaneously} for \emph{all} vertex subsets $U$. As we cannot directly take a union bound over all subsets $U$, we first show that any vertex subset $U$ in an $(\eps,s)$-expander contains a subset $U'\subseteq U$ which expands particularly well (relative to its own size), which is not much smaller than $U$, and so that $U'$ captures much of the guaranteed expansion of $U$ (as it will easily follow that $|N_G(U')|\geq \frac{\eps|U|}{3\log^2n}$). This follows easily from the definition of expansion but is perhaps easier to immediately see why it is true from the perspective introduced in \Cref{prop:expansion-red-blue}.

\begin{prop}\label{prop:well-expanding-core} Let $n\geq 2$, $0<\eps\leq 1$ and $s \ge \log^{24} n$. Let $G$ be an $n$-vertex $(\eps,s)$-expander and let $U\subseteq V(G)$ have size $|U|\le \frac{2}3n$. 

Then, there is a set $U'\subseteq U$ with $|N_{G}(U')|\geq |U'|\log^{24} n$ and $|U'|\geq \frac{\eps|U|}{3\log^{26}n}$.
\end{prop}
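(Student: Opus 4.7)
The plan is to take $U'$ to be a \emph{maximal} (with respect to set inclusion) subset of $U$ satisfying $|N_G(U')| \geq |U'|\log^{24} n$. Such a set exists: since $G$ is an $(\eps,s)$-expander with $s \geq \log^{24} n$, every vertex has degree strictly greater than $s \geq \log^{24} n$, so any singleton $\{v\}$ satisfies the condition. The ratio condition $|N_G(U')| \geq |U'|\log^{24} n$ is then automatic by the choice of $U'$. The bulk of the work lies in lower bounding $|U'|$, for which I would combine the maximality with the $(\eps,s)$-expansion applied to $U$.

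First I would unpack the consequence of the maximality. If $U'=U$ then the ratio condition gives $|U'| = |U|$ and we are trivially done, so assume $U' \subsetneq U$. For every $v \in U \setminus U'$, maximality forces $|N_G(U' \cup \{v\})| < (|U'|+1)\log^{24} n$. A short set-manipulation, writing $|N_G(U' \cup \{v\})| = |N_G(U')| + n_v - \mathbf{1}_{v \in N_G(U')}$ where $n_v = |N_G(v) \setminus (N_G(U') \cup U' \cup \{v\})|$, gives two key consequences: (i) the upper bound $|N_G(U')| < (|U'|+1)\log^{24} n + 1$, and (ii) each such $v$ satisfies $|N_G(v) \cap W| \leq n_v \leq \log^{24} n$, where $W := V(G) \setminus (N_G(U') \cup U)$.

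Next I would apply the $(\eps,s)$-expansion of $G$ to the set $U$ with the edge set $F := E_G(U \setminus U', W)$. Using (ii), $|F| \leq |U \setminus U'|\log^{24} n \leq |U|\log^{24} n \leq s|U|$, so the expansion property applies and yields $|N_{G-F}(U)| \geq \eps|U|/\log^2 n$. Now I would identify $N_{G-F}(U) = N_G(U') \setminus U$: every vertex of $W$ loses all its edges to $U$ (its edges to $U \setminus U'$ are deleted by $F$, and by definition of $W$ it has no edges to $U'$), while every vertex of $N_G(U') \setminus U$ retains its edges to $U' \subseteq U$. Hence
\[
|N_G(U')| \geq |N_G(U') \setminus U| = |N_{G-F}(U)| \geq \frac{\eps|U|}{\log^2 n}.
\]
Combining with the upper bound in (i) gives $(|U'|+1)\log^{24} n > \eps|U|/\log^2 n - 1$, i.e., $|U'| \geq \eps|U|/\log^{26} n - O(1)$, which is at least $\eps|U|/(3\log^{26} n)$ once $|U|$ is sufficiently large (say $|U| \geq 3\log^{26} n/\eps$). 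In the complementary regime $|U| < 3\log^{26} n/\eps$, the target lower bound $\eps|U|/(3\log^{26} n)$ is less than $1$, so any singleton $U'$ satisfies the size condition automatically.

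The main obstacle is ensuring that the edge budget of the expansion is exactly met: the bound $|F| \leq |U|\log^{24} n$ per-vertex contribution precisely matches the robustness parameter $s \geq \log^{24} n$, which is why the hypothesis on $s$ is calibrated to the exponent $24$ in the target ratio. Aside from this exact matching of constants, the argument is a direct application of maximality together with one invocation of the $(\eps,s)$-expansion, and I do not anticipate further difficulty.
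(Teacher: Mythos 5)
Your proof is correct and takes essentially the same approach as the paper: choose $U'\subseteq U$ maximal with $|N_G(U')|\geq |U'|\log^{24}n$, use maximality to bound both $|N_G(U')|$ from above and the degree of each $v\in U\setminus U'$ into the ``uncharted'' part of the graph, then delete those edges as $F$ and apply the $(\eps,s)$-expansion of $U$. The only cosmetic differences are that the paper deletes edges into $V(G)\setminus(U'\cup N_G(U'))$ rather than into $V(G)\setminus(U\cup N_G(U'))$ and folds the small-$|U|$ case into the final inequality chain by noting $U'\neq\emptyset$, whereas you handle that case by a separate clause; both are fine.
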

\begin{proof} If $U=\emptyset$, then $U'=\emptyset$ easily satisfies the required conditions, so we can assume that $|U|\geq 1$. 
Let, then, $U'\subseteq U$ be maximal subject to $|N_{G}(U')|\geq |U'|\log^{24}n$, noting this is possible as $U'=\emptyset$ satisfies these conditions. Suppose that $U\neq U'$, for otherwise $U$ satisfies the conditions itself. Then $|N_{G}(U')|< (|U'|+1)\log^{24}n+1$ or we could add an arbitrary vertex to $U'$ and contradict maximality. Similarly we know that, for every vertex $v\in U\setminus U'$, $v$ has at most $\log^{24}n$ neighbours outside of $U'\cup N_{G}(U')$, for otherwise $U'\cup \{v\}$ contradicts the maximality. Let $F$ be the set consisting of edges $uv$ with $u \in U\setminus U'$ and $v \in V(G)\setminus (U'\cup N_{G}(U'))$, so that $|F|\leq |U\setminus U'|\log^{24}n \leq s|U|$. Thus, we have, by the definition of expansion (and the choice of $F$), that 
\begin{equation}\label{eqn:rain}
\frac{\eps|U|}{\log^2n} \le |N_{G-F}(U)| \le |N_{G}(U')| \le (|U'|+1)\log^{24}n+1\leq 3|U'|\log^{24}n,
\end{equation}
so that $|U'|\geq \frac{\eps |U|}{3\log^{26}n}$, as required. Note that in the sequence of inequalities at \eqref{eqn:rain} we gain that $U'\neq \emptyset$ (as $|U|\geq 1$) in time to use this for the last inequality.
\end{proof}

We now show that we can ensure the conclusion of \Cref{lem:expandintorandom} holds for \emph{all} well-expanding sets simultaneously by taking a union bound, and then use Proposition~\ref{prop:well-expanding-core} to deduce an expansion property for all sets, as follows.

\begin{lemma}\label{lem:expandintorandom2} Suppose that $G$ is an $n$-vertex $(\eps,s)$-expander with $2^{-9}\leq \eps\leq 1$ and $s \ge 2\log^{24} n$. Let $V\subseteq V(G)$ be a random subset chosen by including each vertex independently at random with probability $\frac13$.

Then, with probability at least $1-o\left(1/n\right)$, for every $U\subseteq V(G)$ and every set $F\subseteq E(G)$ with $|F|\leq \frac{|U|}{\log^{27}n}$
\begin{equation}\label{eqn:B1UV}
|B_{G-F}^{\log^4n}(U,V)|> \frac{|V|}{2}.
\end{equation}
\end{lemma}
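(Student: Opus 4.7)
The plan is to combine Lemma~\ref{lem:expandintorandom} (which gives the expansion property for a \emph{fixed} well-expanding set and a fixed small edge set with probability strong enough for a union bound) with Proposition~\ref{prop:well-expanding-core} (which reduces an arbitrary set to a well-expanding subset that is only polylogarithmically smaller). The extra $\log n$ factor in the hypothesis $|F|\leq |U|/\log^{27}n$ of the statement (compared to the $\log^{26}n$ loss in Proposition~\ref{prop:well-expanding-core}) is precisely what is needed to bridge these two results.

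First, I would define the ``good event'' $\mathcal{E}$ as the event that, for every $U'\subseteq V(G)$ with $|N_G(U')|\geq |U'|\log^{24}n$ and every $F\subseteq E(G)$ with $|F|\leq |U'|$, the bound $|B^{\log^4 n}_{G-F}(U',V)|>|V|/2$ holds. For each such fixed pair $(U',F)$, Lemma~\ref{lem:expandintorandom} (whose hypotheses match ours since $\eps\geq 2^{-9}$ and $s\geq 8\log^{13}n$) gives failure probability at most $e^{-\Omega(|U'|\log^2 n)}$. A crude union bound over such pairs with $|U'|=u$ is
\[
\sum_{u\geq 1}\binom{n}{u}\cdot (u+1)\binom{\binom{n}{2}}{u}\cdot e^{-\Omega(u\log^2 n)}\leq \sum_{u\geq 1} n^{3u+2}\cdot e^{-\Omega(u\log^2 n)}=o(1/n),
\]
since $3u\log n=o(u\log^2 n)$. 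Hence $\mathbb{P}(\mathcal{E})\geq 1-o(1/n)$.

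Next I would argue deterministically, assuming $\mathcal{E}$ holds, that the conclusion \eqref{eqn:B1UV} follows for every $U$ and every admissible $F$. Given such $U,F$, if $|U|\leq \frac{2}{3}n$, apply Proposition~\ref{prop:well-expanding-core} to obtain $U'\subseteq U$ with $|N_G(U')|\geq |U'|\log^{24}n$ and
\[
|U'|\geq \frac{\eps|U|}{3\log^{26}n}\geq \frac{|U|}{3\cdot 2^{9}\log^{26}n}.
\]
For all sufficiently large $n$ (so that $\log n\geq 3\cdot 2^9$), this gives $|F|\leq |U|/\log^{27}n\leq |U'|$, so $\mathcal{E}$ applied to $(U',F)$ yields $|B^{\log^4 n}_{G-F}(U',V)|>|V|/2$, and monotonicity $U'\subseteq U$ gives the same bound for $U$. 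If instead $|U|>\frac{2}{3}n$, I would first pass to an arbitrary subset $U_0\subseteq U$ of size $\lfloor 2n/3\rfloor$ (so $|U_0|\geq n/2$), note that $|F|\leq n/\log^{27}n\leq 2|U_0|/\log^{27}n$, and repeat the previous argument with $U_0$ in place of $U$; the extra factor of $2$ is absorbed by the slack in the bound $|U'|\geq |U_0|/(3\cdot 2^9\log^{26}n)$, again for large $n$.

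The main obstacle here is not conceptual but rather a careful bookkeeping of polylogarithmic factors: one must verify that the well-expanding subset $U'$ produced by Proposition~\ref{prop:well-expanding-core} is large enough that the edge set $F$ remains sublinear in $|U'|$ (indeed, of size at most $|U'|$), which is exactly where the particular exponent $27$ in the hypothesis $|F|\leq |U|/\log^{27}n$ becomes essential. Everything else is just the union bound plus monotonicity of the ball operator $B^{\log^4 n}_{G-F}(\cdot,V)$.
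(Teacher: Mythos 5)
Your proposal is correct and follows essentially the same approach as the paper's proof: a union bound via Lemma~\ref{lem:expandintorandom} over well-expanding sets $U'$ together with small edge sets $F$, followed by a deterministic reduction using Proposition~\ref{prop:well-expanding-core} and monotonicity of the ball operator, with a separate easy reduction for $|U|>\frac{2}{3}n$. The only cosmetic difference is in how the large-$U$ case is handled (the paper first proves the $|U|\le\frac{2}{3}n$ case under the weaker hypothesis $|F|\le 2|U|/\log^{27}n$ and then reduces, while you re-run the reduction on $U_0$ directly), but these are equivalent.
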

\begin{proof} 
Say a set $U'\subseteq V(G)$ \emph{expands well} in $G$ if 
$|N_{G}(U')| \ge |U'|\log^{24}{n}.$ Given a non-empty well-expanding set $U'\subseteq V(G)$ and a set of edges $F$ of size
at most $|U'|$, \Cref{lem:expandintorandom} applied to $U'$ implies that 
\begin{equation}\label{eq:1}
    |B_{G-F}^{\log^4n}(U', V)|> \frac{|V|}2
\end{equation}
fails with probability at most $e^{-\Omega(|U'|\log^2n)}.$

Now a union bound over all pairs $(U',F)$ such that $U'$ is a well-expanding set in $G$ and $F$ is a set of at most $|U'|$ edges tells us that \emph{some} such pair $(U',F)$ fails \eqref{eq:1} with probability at most 
\begin{align*}
    \sum_{(U',F)}  e^{-\Omega(|U'|\log^2n)} &\le 
    \sum_{u=1}^n\sum_{f=1}^{u} \binom{n}{u}\binom{n^2}{f} \cdot e^{-\Omega(u\log^2n)} \\
&    \le \sum_{u=1}^n u\cdot n^{3u}\cdot e^{-\Omega(u\log^2n)} \le \sum_{u=1}^n e^{-\Omega(u\log^2n)}=o(1/n).
\end{align*}

Thus, with probability $1-o(1/n)$, we can assume that \eqref{eq:1} holds for every well-expanding set $U'$ and set $F\subseteq E(G)$ with $|F|\leq |U'|$. We will now show that this implies \eqref{eqn:B1UV} holds for all $U\subseteq V(G)$ and $F\subseteq E(G)$ with $|F|\leq \frac{|U|}{\log^{27}n}$, completing the proof.

Let then $U\subseteq V(G)$ with $|U|\leq \frac23n$ and let $F\subseteq E(G)$ satisfy the (slightly weaker) condition $|F|\leq \frac{2 |U|}{\log^{27}n}$. Then, by \Cref{prop:well-expanding-core}, there is a set $U'\subseteq U$ which is well-expanding for which $|U'|\geq \frac{\eps|U|}{3\log^{26}n}$. Noting that $|F|\leq |U'|$ (as we may assume $n$ is large with probability $1-o(1/n)$), we therefore have that
\[
|B_{G-F}^{\log^4n}(U,V)|\geq |B_{G-F}^{\log^4n}(U',V)|> \frac{|V|}{2}.
\]
Finally, consider $U\subseteq V(G)$ with $|U|> \frac23n$ and let $F\subseteq E(G)$ satisfy $|F|\leq \frac{|U|}{\log^{27}n}$. Let $\bar{U}\subseteq U$ be an arbitrary subset with $\frac{n}2\leq |\bar{U}|\leq \frac23 n$, so that we have $|F|\leq \frac{2|\bar{U}|}{\log^{27}n}$, and hence, from what we have just shown,
\[
|B_{G-F}^{\log^4n}(U,V)|\geq |B_{G-F}^{\log^4n}(\bar{U},V)|> \frac{|V|}{2},
\]
as required.
\end{proof}


\subsection{Path connectedness through a random subset in expanders}\label{subsec:well-connectivity-weak-expander}

We are now ready to prove Theorem~\ref{thm:pathconnect}. As discussed at the start of this section, we first split the edges of the graph $G$ into expanders, before applying Lemma~\ref{lem:expandintorandom2} to each of these, to get (with high probability), a strong enough expansion condition to apply \Cref{lem:connectmanypairs}.

\begin{proof}[ of Theorem~\ref{thm:pathconnect}] To recap: we have an $n$-vertex $(\eps,s)$-expander, $G$, with $2^{-7}\leq \eps \leq 1$ and $s\geq \log^{135}n$, and a random subset $V\subseteq V(G)$ where each vertex is included independently at random with probability $\frac13$. To prove Theorem~\ref{thm:pathconnect}, we need to show that, with high probability, $G$ is $(4\log^5n,2^8\log^5n)$-path connected through $V$.

Let $k=2^{17}\log^{42} n$, so that $s\geq 2^{12}\eps^{-1}k^2\log^4n$, and let $s'=\frac{\sqrt{s\eps}}{ 8k \log n} \ge 2\log ^{24} n$. Using Lemma~\ref{lem:partitionedgesintoexpanders}, take edge disjoint graphs $G_1,\ldots,G_k$ such that $E(G)=\bigcup_{i\in [k]}E(G_i)$ and, for each $i\in [k]$, $G_i$ is an $\left(\frac{\eps}4,s'\right)$-expander. 

Then, by \Cref{lem:expandintorandom2} and a union bound over the $k$ graphs $G_i$, with high probability we can assume that, for each $i\in [k]$ and every $U\subseteq V(G_i)$ and $F\subseteq E(G_i)$ with $|F|\leq \frac{|U|}{\log^{27}n}$,
\[
|B^{\log^4n}_{G_i-F}(U,V)|> \frac{|V|}{2}.
\]

Now, let $U\subseteq V(G)$ and $F\subseteq E(G)$ with $|F|\leq 2^{17}|U|\log^{15}n$. As the graphs $G_i$, $i\in [k]$, are edge disjoint, there must be some $i\in [k]$ with $|F\cap E(G_i)|\leq \frac{2^{17}|U|\log^{15}n}k=\frac{|U|}{\log^{27}n}$, and therefore 
\[
|B^{\log^4n}_{G-F}(U,V)|\geq |B^{\log^4n}_{G_i-F}(U,V)|> \frac{|V|}{2}.
\]
We also know by Chernoff's inequality (\Cref{chernoff}) that with high probability $|V| \ge \frac n8 +1$, and thus, by \Cref{lem:connectmanypairs}, applied with $k=2^8\log^{5}n$ and $\ell=\log^4 n$ we conclude that $G$ is $(4\log^5n,2^8\log^5n)$-connected, as desired.
\end{proof}

\section{Cycle decompositions}\label{sec:logstarproof}
In this section, we will prove our main results, \Cref{thm:logstar,thm:decompexander}. Before doing this we need to put together a few final ingredients.
In Section~\ref{sec:connectinexpander}, we established a very robust connectivity property of expanders. In \Cref{sec:skeletons} we will show that in an expander one can find a subgraph with few edges and yet (effectively) the same connectivity property through a random subset $V$, with high probability. We will refer to this subgraph as a \emph{skeleton} of our graph, divide the vertex set into three as $V(G)=V_1\cup V_2\cup V_3$, and find three matching skeletons. In \Cref{sec:lovasz} we show that any graph can be decomposed into few paths in such a way that no vertex is used as an endvertex many times. In \Cref{subsec:decompexpander} we combine these results to decompose any expander into linearly many cycles and a few leftover edges. As outlined in Section~\ref{subsec:sketch}, we achieve this by setting aside the skeletons, then decomposing the remainder of the graph into three sets of paths and finally using the connection properties of the skeletons to join the endvertices of these sets of paths, where the sets of paths are matched to the skeletons to ensure this creates edge disjoint cycles. These cycles decompose all the edges in the graph which are not in the skeleton and since the skeleton is chosen to be sparse this gives us the result. 

The final ingredient in the proof of \Cref{thm:decompexander}, given in \Cref{sec:decomp-general}, is to decompose an arbitrary graph into expanders and a few leftover edges via \Cref{lem:decompexander}, to each of which we can apply our expander decomposition result. All that will remain, then, is to iteratively apply \Cref{thm:decompexander} in \Cref{subsec:iteration}, while removing some additional cycles, to deduce \Cref{thm:logstar}.

\subsection{Finding the skeletons}\label{sec:skeletons}

To find the skeletons, we will use \Cref{thm:pathconnect} to embed a sparse well-connected `template' graph (from \Cref{lem:template}) with its edges replaced by relatively short edge disjoint paths, and show that the image of this embedding has the properties we need of a skeleton, as follows.

\begin{lemma}\label{lem:sparseconnect}
Let $G$ be an $n$-vertex graph which is an $(\eps,s)$-expander with $2^{-7}\leq \eps \leq 1$ and $s\geq \log^{135}n$. Let $V\subseteq V(G)$ be chosen by including each vertex independently at random with probability $\frac13$. Then, with high probability, there is a subgraph of $G$ with at most $2^{9}n\log^{10}n$ edges which is $(\log^7 n,2)$-path connected through $V$.
\end{lemma}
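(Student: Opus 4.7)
The plan is to use the template graph from \Cref{lem:template} as a blueprint, embedding each of its edges as a short edge-disjoint path in $G$ through $V$, and then inheriting the template's connectivity with weakened parameters. First I sample $V$ and apply \Cref{thm:pathconnect} (whose hypotheses match exactly) to deduce that, with high probability, $G$ is $(4\log^5 n, 2^8\log^5 n)$-path connected through $V$, while a Chernoff bound gives $|V|\geq n/6$ whp. Next, \Cref{lem:template} yields a template graph $T$ on $n$ vertices with $\Delta(T)\leq 2^8\log^5 n$ and a distinguished set $V_T$ of size $n/6$ such that $T$ is $(\tfrac14\log^2 n,2)$-path connected through $V_T$. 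I identify $V(T)$ with $V(G)$ via any bijection sending $V_T$ into $V$, which is possible since $|V|\geq n/6$, and henceforth regard $T$ as a graph on $V(G)$ with $V_T\subseteq V$.

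The skeleton $H$ is then built by realising each edge of $T$ as a short edge-disjoint path in $G$ through $V$. Since every vertex of $V(G)$ appears in at most $\Delta(T)\leq 2^8\log^5 n$ edges of $T$, the path connectivity of $G$ through $V$, applied with the collection of pairs $E(T)$, produces edge-disjoint paths $P_{xy}$ for $xy\in E(T)$, each an $xy$-path through $V$ of length at most $4\log^5 n$. I set $H:=\bigcup_{xy\in E(T)}P_{xy}$; using $|E(T)|\leq \tfrac12 n\Delta(T)$, this gives
\[
|E(H)|\leq 4\log^5 n\cdot\tfrac12 n\Delta(T)\leq 2^9 n\log^{10}n,
\]
matching the required edge bound.

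It remains to verify $H$ is $(\log^7 n,2)$-path connected through $V$. Given any $\mathcal{P}\subseteq\binom{V(G)}{2}$ in which every vertex appears at most twice, I use the $(\tfrac14\log^2 n,2)$-path connectedness of $T$ through $V_T$ to find edge-disjoint paths $Q_{\{x,y\}}$ in $T$, one per pair, each of length at most $\tfrac14\log^2 n$ with interior in $V_T$. For each such $Q_{\{x,y\}}$, I replace every edge $uv$ along it by the corresponding path $P_{uv}$ in $H$, producing an $xy$-walk in $H$ of length at most $\tfrac14\log^2 n \cdot 4\log^5 n = \log^7 n$; its interior vertices all lie in $V$, since vertices interior to some $P_{uv}$ do by construction, while the template vertices traversed inside $T$ lie in $V_T\subseteq V$. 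Extracting a simple $xy$-path from this walk yields the required path. Edge-disjointness is preserved because the $Q_{\{x,y\}}$ are edge-disjoint in $T$ and distinct edges of $T$ correspond to edge-disjoint paths in $H$. I expect no serious obstacle in this lemma: the two substantial ingredients, \Cref{thm:pathconnect} and \Cref{lem:template}, combine cleanly, and the only point requiring mild care is arranging $V_T\subseteq V$ via Chernoff before fixing the identification with the template.
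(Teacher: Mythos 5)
Your proof is correct and follows essentially the same approach as the paper: invoke \Cref{thm:pathconnect} for path connectivity of $G$ through $V$, use \Cref{lem:template} as a sparse template, realize its edges as edge-disjoint short paths through $V$, and then lift any collection of pairs via template paths replaced by the embedded paths. The only difference is that you make explicit the relabeling of the template so that its distinguished set lands inside $V$, a step the paper leaves implicit; the parameter computations ($2^9 n\log^{10}n$ edges and length $\log^7 n$) match.
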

\begin{proof} By Theorem~\ref{thm:pathconnect} applied to $G$ and $V$, $G$ is with high probability $(4\log^5 n,2^8\log^5 n)$-path connected through $V$. Note that, by Chernoff's inequality (\Cref{chernoff}), we can, in addition, ensure with high probability that $|V|\geq \frac n6$, and since our goal is to show a statement with high probability, we may assume that $n$ is large enough to apply \Cref{lem:template}. That is, by that lemma, we may assume there is an auxiliary graph $H$ with vertex set $V(H)=V(G)$ which is $\left(\frac14\log^2 n,2\right)$-path connected through $V$, and such that $\Delta(H) \le2^8\log^5 n$.

Note that $E(H)$ is a collection of pairs of vertices in $V(G)$ with the property that every vertex appears in at most $\Delta(H) \le 2^8\log^5 n$ pairs. Hence, since $G$ is $(4\log^5 n, 2^8\log^5 n)$-path connected through $V$, we can find for each $e\in E(H)$ a path $P_e$ through $V$ of length at most $4\log^5 n$, such that all the paths $P_e$, $e\in E(H)$, are edge disjoint. Let $G'$ have vertex set $V(G)$ and edge set $\bigcup_{e\in E(H)}E(P_e)$, noting that
\[
|E(G')|\leq 4\log^5n \cdot |E(H)|\leq 2n\log^5n \cdot \Delta(H)\leq 2n\log^5n\cdot 2^8\log^5n=2^{9}n\log^{10}n.
\]

Therefore, we need only show that $G'$ is $(\log^7 n,2)$-path connected. For this, let $\mathcal{P}\subseteq \binom{V(G)}{2}$ be a family of pairs of vertices from $V(G)$ with each vertex appearing in at most $2$ different pairs. Since $H$ is $\left(\frac14\log^2 n,2\right)$-path connected through $V$ we can find edge disjoint paths in $H$ through $V$, each of length at most $\frac14\log^2 n$, through $V$, joining each pair in $\mathcal{P}$. If we now replace each edge $e$ of $H$ used by one of these paths with $P_e$ we obtain a collection of edge disjoint \emph{walks} in $G'$ of length at most $\log^7 n$, through $V$, joining each pair in $\mathcal{P}$. Replacing each of the walks with its shortest subwalk joining the same endvertices, we obtain paths which are edge disjoint, each have length at most $\log^7n$, and which connect the vertex pairs in $\mathcal{P}$. Thus, $G'$ is $(\log^7 n,2)$-path connected, as claimed.
\end{proof}


\subsection{Lov\'asz path covering with well-spread endvertices}\label{sec:lovasz}
As quoted in the introduction, Lov\'asz proved the following classical decomposition result in 1968.

\begin{theorem}[Lov\'asz~\cite{lovasz1968covering}]\label{thm:lovasz}
Every $n$-vertex graph can be decomposed into at most $\frac n2$ paths and cycles.
\end{theorem}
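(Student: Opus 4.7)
I would prove Theorem~\ref{thm:lovasz} by induction on $n = |V(G)|$; writing $p(G)$ for the minimum number of paths and cycles in a decomposition of $G$, the goal is $p(G) \le \lfloor n/2 \rfloor$. The base cases $n \le 2$ are trivial, and one may assume $G$ is connected since $\sum_i \lfloor n_i/2 \rfloor \le \lfloor n/2 \rfloor$. Isolated vertices and single-edge components are dispatched immediately: removing an isolated vertex preserves the bound, and removing both endpoints of a single-edge component removes two vertices while only introducing one new path. So one may assume $\delta(G) \ge 1$ and that no edge is a component by itself.

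For the main inductive step, I would pick a vertex $v$ of $G$ (its choice depending on the parity of its degree), apply the inductive hypothesis to $G - v$ to obtain a minimum decomposition $\mathcal{D}'$ of $G - v$, and reintegrate the edges incident to $v$ into $\mathcal{D}'$ with controlled extra cost. The key exchange observations are: (i) two paths of $\mathcal{D}'$ sharing an endpoint can be concatenated into a single path or cycle; (ii) a cycle of $\mathcal{D}'$ meeting a path-endpoint of $\mathcal{D}'$ can be opened at that vertex and spliced into the path; (iii) extending a path at an endpoint by a new edge does not change the number of elements of the decomposition. Using (i)--(iii) one may polish $\mathcal{D}'$ so that enough of $v$'s neighbours appear as path-endpoints of $\mathcal{D}'$ (or on cycles of $\mathcal{D}'$ that are otherwise disjoint from the endpoints), so that the incident edges of $v$ can largely be absorbed as free path-extensions, with $v$ itself becoming the new endpoint.

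The main obstacle is the configuration in which every neighbour of $v$ in $G - v$ is an interior vertex of some element of $\mathcal{D}'$ and no local exchange improves matters (in particular when $G$ is Eulerian, where the claim $p(G) \le \lfloor n/2 \rfloor$ is only one weaker than the open Haj\'os conjecture, so no purely local argument can be expected to work). Lovász's original argument handles this via a global rearrangement that exploits the parity of $\deg(v)$ and the degree sequence of $G$: the edges incident to $v$ pair up inside elements of the final decomposition passing through $v$, and matching this pairing consistently with $\mathcal{D}'$ requires choosing $v$ carefully (typically of minimum degree, or using an odd-degree vertex when one exists) and possibly switching between removing $v$ itself and removing a single edge incident to $v$, depending on parity. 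This parity-sensitive case analysis is the technical heart of the proof, and is where I would expect to spend the bulk of the effort; the surrounding bookkeeping and the easy reductions above are routine.
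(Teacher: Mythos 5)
The paper does not prove Theorem~\ref{thm:lovasz}; it cites Lov\'asz~\cite{lovasz1968covering} and uses the statement as a black box (the paper's own contribution nearby is Corollary~\ref{cor:lovasz}, a short deduction from it). So there is no ``paper's own proof'' to compare your attempt against, and the only question is whether your sketch constitutes a proof. It does not.

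Your overall plan --- induction on $n$, reduce to connected graphs with $\delta(G)\ge 1$, delete a vertex $v$, apply the inductive hypothesis to $G-v$, and reintegrate the edges at $v$ --- is a sensible shape, and the easy reductions (isolated vertices, single-edge components, disconnectedness via $\lfloor a/2\rfloor+\lfloor b/2\rfloor\le\lfloor(a+b)/2\rfloor$) are fine. But the hard content is deferred, not supplied. Two concrete gaps. First, your exchange move (i), that two paths of $\mathcal D'$ sharing an endpoint can be concatenated into a single path or cycle, is false as stated: paths in an edge decomposition need not be internally vertex-disjoint, and if the two share an interior vertex the concatenation is a walk through a repeated vertex, which decomposes into a path plus one or more cycles and need not reduce the count at all; move (ii) has the same defect. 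Second, and more fundamentally, you explicitly leave the crux unproved: the configuration in which every neighbour of $v$ is interior to an element of $\mathcal D'$ and no local exchange helps, together with the parity obstruction that for odd $n$ we have $\lfloor(n-1)/2\rfloor=\lfloor n/2\rfloor$, so the induction gives no slack and all of $v$'s incident edges must be absorbed at zero extra cost. You correctly observe that this is where Lov\'asz's argument actually does its work, but you supply no argument there --- only the promise of a ``parity-sensitive case analysis.'' As written, the proposal identifies the difficulty rather than resolving it.
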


Theorem~\ref{thm:lovasz} almost provides the path decompositions that we need. At the expense of using perhaps slightly more paths, we can ensure in addition that no vertex is used often as an endvertex of the paths in the decomposition through the following simple deduction.

\begin{corollary}\label{cor:lovasz}
Every $n$-vertex graph $G$ can be decomposed into paths so that each vertex of $G$ is an endvertex of at most two paths in the decomposition.
\end{corollary}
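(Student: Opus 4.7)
The plan is to invoke Lovász's theorem (Theorem~\ref{thm:lovasz}) and then post-process its decomposition into a path-only decomposition with the desired endpoint-multiplicity property. Let $\mathcal{D}$ be the decomposition of $G$ into $p$ paths and $c$ cycles supplied by Theorem~\ref{thm:lovasz}, so $p+c \le n/2$, and denote by $e_{\mathcal{D}}(v)$ the number of paths in $\mathcal{D}$ with $v$ as an endpoint; in particular $\sum_v e_{\mathcal{D}}(v) = 2p$. I will call a vertex \emph{free} if $e_{\mathcal{D}}(v) = 0$, and write $F$ for the set of free vertices, noting that $|F| \ge n - 2p \ge 2c$.

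For each cycle $C_i \in \mathcal{D}$, the natural way to turn $C_i$ into paths is to pick two distinct vertices $u_i, w_i \in V(C_i)$ and take the two $u_iw_i$-arcs of $C_i$, each of which is a path with endpoint pair $\{u_i, w_i\}$; this contributes $+2$ to the endpoint multiplicities of $u_i$ and $w_i$ and leaves every other vertex of $V(C_i)$ unaffected. Picking every pair $\{u_i, w_i\}$ from $F$ and ensuring these pairs are disjoint across cycles therefore yields a path decomposition with $e(v) \le 2$ for every $v$, and the count $|F| \ge 2c$ shows that the aggregate supply of free vertices matches the total demand of $2c$ split-vertex slots.

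The main obstacle is the local availability of free vertices on each cycle, i.e., a Hall-type condition asking that for every subfamily $S$ of cycles, at least $2|S|$ free vertices lie on the cycles of $S$. I plan to handle this by local surgery on $\mathcal{D}$. When a cycle $C_i$ has exactly one free vertex $y \in V(C_i) \cap F$, instead of splitting I absorb $C_i$ into a path $P \in \mathcal{D}$ that already ends at some $v \in V(C_i)$, extending $P$ along one arc of $C_i$ from $v$ to $y$ and leaving the other arc of $C_i$ as a fresh $vy$-path; this replaces $\{P, C_i\}$ by two paths, preserves $p+c$, consumes the single free vertex $y$ as a double endpoint, and keeps $e(\cdot) \le 2$. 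The hardest case is a cycle $C_i$ containing no free vertex at all, i.e., every vertex of $C_i$ being an endpoint of some path in $\mathcal{D}$; I expect to rule this out when $\mathcal{D}$ is chosen with the minimum number of parts, since two paths $P_1, P_2$ ending at distinct $v_1, v_2 \in V(C_i)$ can then be concatenated through an arc of $C_i$ into a single longer path, with the other arc forming a second path, strictly reducing the total count of parts below Lovász's bound. Managing vertex repetitions that might arise in these concatenations is the main technicality, and I expect to handle it by choosing the concatenation arc and the two paths carefully, iterating if needed.
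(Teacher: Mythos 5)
Your approach is genuinely different from the paper's. The paper modifies the \emph{input} rather than post-processing the output: it forms $G'$ by adding a new vertex $v_0$ joined to every vertex of $G$ of \emph{even} degree, so that every vertex of $G$ has odd degree in $G'$, applies \Cref{thm:lovasz} to $G'$, and a short count (there are at most $n+1$ path-endvertices in total and every vertex of $G$ accounts for at least one) shows the resulting decomposition consists only of paths with each vertex of $G$ an endvertex of exactly one; deleting $v_0$ can promote a vertex to an endvertex at most once more, through its unique edge to $v_0$, giving the bound of two. Your plan instead applies Lovász directly to $G$ and tries to split cycles at free vertices with local repairs, and as written it has genuine gaps.

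The central one is the vertex-repetition problem you flag yourself. The pieces of a Lovász decomposition are edge-disjoint but may share vertices freely, so concatenating a path with an arc of a cycle, or two paths through an arc, is in general a self-intersecting walk rather than a path; ``choosing the arc and paths carefully, iterating if needed'' describes the difficulty, it does not resolve it, and there is no structural reason supplied for why a non-self-intersecting choice exists. Secondly, the global count $|F|\ge n-2p\ge 2c$ does not localize at all: a free vertex lying on a path and on no cycle is counted in $|F|$ but is useless for your splitting, so it does not even begin to justify the Hall condition you correctly identify as what is needed. You only treat the cases of a single cycle with one or zero free vertices; you do not treat a subfamily $S$ of cycles with fewer than $2|S|$ free vertices on $\bigcup_{C\in S}V(C)$ (for instance, two cycles whose only free vertices are a common pair $\{u,v\}$), nor do you argue that your surgeries preserve or restore the condition as you go. The paper's device eliminates cycles from the decomposition altogether, so none of these issues arise there.
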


\begin{proof}
Form a graph $G'$ from $G$ by adding a new vertex $v_0$ and an edge from $v_0$ to each vertex $v\in V(G)$ for which $d_G(v)$ is even. By Theorem~\ref{thm:lovasz}, there is a collection $\mathcal{C}$ of at most $\frac{n+1}2$ cycles and paths which decomposes $G'$. Note that each vertex $v\in V(G)$ has odd degree in $G'$, and therefore must be an endvertex of an odd number of paths in $\mathcal{C}$, and thus, in particular, must be an endvertex of some path in $\mathcal{C}$. 
As the paths in $\mathcal{C}$ together have at most $n+1$ endvertices, each vertex in $G$ is the endvertex of at most 1 path in $\mathcal{C}$, as otherwise there would need to be $n-1+3 > n+1$ endvertices. Note that, furthermore, as there must be at least $n$ endvertices together for the paths in $\mathcal{C}$, we must have that $\mathcal{C}$ contains at least $\frac n2$ paths. Thus, as $|\mathcal{C}|\leq \frac{n+1}2$, $\mathcal{C}$ is in fact a collection consisting only of paths, with no cycles.

Now, for each path $P\in \mathcal{C}$, if $v_0\in V(P)$ then remove the vertex $v_0$ from $P$, and let $\mathcal{C'}$ be the collection of all the resulting paths. Then, $\mathcal{C}'$ is a decomposition of $G$ into paths. Furthermore, observe that each vertex $v\in V(G)$ is an endvertex of a path $P\in \mathcal{C}$ only if $v$ was an endvertex of some path in $\mathcal{C}$ which contained $P$ or if $P$ was created by removing the edge $vv_0$ from a path in $\mathcal{C}$. Thus, each vertex is an endvertex of at most two paths in $\mathcal{C'}$, so that $\mathcal{C}'$ decomposes $G$ as required. 
\end{proof}

\subsection{Decomposing expanders}\label{subsec:decompexpander}
In this section, we will decompose an expander into linearly many cycles and a few leftover edges, as follows.

\begin{lemma}\label{lem:decompexander} Any sufficiently large $n$-vertex graph $G$ which is an $(\eps,s)$-expander with $2^{-5}\leq \eps \leq 1$ and $s \ge \log^{273} n$ can be decomposed into at most $3n$ cycles and at most $2^{11}n\log^{10}n$ edges.
\end{lemma}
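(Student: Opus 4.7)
The plan is to execute the three-colour strategy described in \Cref{subsec:sketch}. First, apply \Cref{lem:partitionedgesintoexpanders} with $k = 3$ to edge-partition $G$ into three graphs $G^{(1)}, G^{(2)}, G^{(3)}$ on the vertex set $V(G)$, each being an $(\eps/4, s'')$-expander. Given the hypotheses $\eps \ge 2^{-5}$ and $s \ge \log^{273} n$, one has $s'' = \sqrt{s\eps}/(24 \log n) \ge \log^{135} n$ for $n$ large, so each $G^{(i)}$ meets the expansion hypotheses of \Cref{lem:sparseconnect}. In parallel, uniformly randomly partition $V(G) = V_1 \cup V_2 \cup V_3$, so that each vertex lies in $V_i$ independently with probability $\tfrac{1}{3}$, matching the distribution required by \Cref{lem:sparseconnect}.

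Applying \Cref{lem:sparseconnect} to each pair $(G^{(i)}, V_i)$ and taking a union bound, I obtain with high probability, for each $i \in [3]$, a `skeleton' subgraph $H_i \subseteq G^{(i)}$ with $|E(H_i)| \le 2^9 n \log^{10} n$ that is $(\log^7 n, 2)$-path connected through $V_i$. Edge-disjointness of the $G^{(i)}$ transfers to edge-disjointness of $H_1, H_2, H_3$. Next I partition the edges of $G - E(H_1) - E(H_2) - E(H_3)$ into $G_1, G_2, G_3$ with $V(G_i) \subseteq V_{i+1} \cup V_{i+2}$ (indices mod $3$): an edge with endpoints in distinct $V_j, V_k$ goes to the unique $G_i$ with $\{i,j,k\} = [3]$, while an edge with both endpoints in some $V_j$ is assigned arbitrarily to either $G_i$ with $i \neq j$.

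For each $i$, applying \Cref{cor:lovasz} to $G_i$ yields a decomposition $\mathcal{Q}_i$ of $E(G_i)$ into paths such that $|\mathcal{Q}_i| \le |V(G_i)|$ and every vertex is an endvertex of at most two paths of $\mathcal{Q}_i$. Let $\mathcal{P}_i$ be the multiset of endvertex pairs. Since each vertex appears in at most two pairs of $\mathcal{P}_i$, the $(\log^7 n, 2)$-path connectedness of $H_i$ through $V_i$ supplies, for each $\{x,y\} \in \mathcal{P}_i$, an $xy$-path $R_{xy} \subseteq H_i$ with internal vertices in $V_i$, with all the $R_{xy}$ edge disjoint. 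For each $P \in \mathcal{Q}_i$ with endvertices $\{x,y\}$, set $C_P := P \cup R_{xy}$. The internal vertices of $P$ lie in $V_{i+1} \cup V_{i+2}$ whereas those of $R_{xy}$ lie in $V_i$, and $E(P) \subseteq E(G_i)$ is disjoint from $E(R_{xy}) \subseteq E(H_i)$, so $C_P$ really is a cycle. Across all $i$ and $P$, the cycles $C_P$ are pairwise edge disjoint: within a fixed $i$ by edge-disjointness of $\mathcal{Q}_i$ and of $\{R_{xy}\}$, and across different $i$ because the $G_i$'s and the $H_i$'s are respectively edge disjoint.

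Finally, I count: the number of cycles is $\sum_i |\mathcal{Q}_i| \le \sum_i (|V_{i+1}| + |V_{i+2}|) = 2n \le 3n$, and every edge of $G$ is either used in one of these cycles or lies in $E(H_1) \cup E(H_2) \cup E(H_3)$ unused, so the number of leftover edges is at most $3 \cdot 2^9 n \log^{10} n \le 2^{11} n \log^{10} n$. The hard technical work is already encapsulated in \Cref{lem:sparseconnect} (which rests on \Cref{thm:pathconnect}), so the main remaining obstacle is bookkeeping: verifying that parameter slack survives the three-way edge partition followed by the invocation of \Cref{lem:sparseconnect}, and checking that the three-colour construction interlocks so that combining each $P \in \mathcal{Q}_i$ with its $R_{xy} \subseteq H_i$ genuinely yields a cycle and that all such cycles are globally edge disjoint.
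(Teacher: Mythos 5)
Your proposal is correct and follows essentially the same route as the paper's proof of \Cref{lem:decompexander}: edge-partition $G$ into three expanders via \Cref{lem:partitionedgesintoexpanders} with $k=3$, randomly tri-partition the vertex set, extract a sparse $(\log^7 n, 2)$-path-connected skeleton from each expander via \Cref{lem:sparseconnect}, split the remaining edges into three parts indexed so that the $i$-th part avoids $V_i$, path-decompose each via \Cref{cor:lovasz}, and close each path into a cycle using the matching skeleton. The only cosmetic differences are your explicit mention of the union bound over the three skeleton events and your slightly sharper $2n$ (rather than $3n$) count of cycles, both of which are fine.
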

\begin{proof} Let $s'=\frac{\sqrt{s\eps}}{24\log n}\ge \log^{135} n$. Using Lemma~\ref{lem:partitionedgesintoexpanders} with $k=3$, take edge disjoint $\left(\frac{\eps}4,s'\right)$-expander subgraphs $G_1,G_2$ and $G_3$ of $G$, each with vertex set $V(G)$, so that $E(G)=E(G_1)\cup E(G_2)\cup E(G_3)$. Next, partition $V(G)=V_1\cup V_2\cup V_3$ by assigning each vertex to a set in the partition uniformly and independently at random. Using Lemma~\ref{lem:sparseconnect}, for each $i\in [3]$, find a subgraph $G_i'\subseteq G_i$ with at most $2^{9}n\log^{10}n$ edges which is $(\log^7 n,2)$-path connected through $V_i$. 

For each $i\in [3]$, let $H_i$ be the graph with vertex set $V(G)\setminus V_i$ whose edges are the edges of $G-G_1'-G_2'-G_3'$ lying within $V_{i+1}$ or between $V_{i+1}$ and $V_{i+2}$, with these indices taken appropriately modulo $3$, noting that these graphs partition $G-G_1'-G_2'-G_3'$. For each $i\in [3]$, then, apply Corollary~\ref{cor:lovasz} to decompose $H_i$ into a collection of paths $\mathcal{P}_i$ with the property that no vertex is an endvertex of more than two of the paths in $\mathcal{P}_i$, noting that, in particular, this implies that $|\mathcal{P}_i|\le n.$ Since $G_i'$ is $(\log^7 n,2)$-path connected through $V_i$ and the paths in $\mathcal{P}_i$ have no vertices in $V_i$, for each path $P\in\mathcal{P}_i$, we can find a path $Q_P$ through $V_i$ in $G_i'$ joining the endvertices of $P$, so that all these new paths are edge disjoint. As $V(H_i) \cap V_i =\emptyset$, for each $P\in \mathcal{P}_i$, $P\cup Q_P$ is a cycle, and furthermore, as $H_i$ and $G_i'$ are edge disjoint, the cycles form a collection, $\mathcal{C}_i$ say, of at most $n$ edge disjoint cycles whose edges contain all of the edges of $H_i$.

As the subgraphs $H_i\cup G_i'$, $i\in [3]$, are edge disjoint, $\mathcal{C}_1\cup \mathcal{C}_2\cup \mathcal{C}_3$ is a collection of at most $3n$ cycles which contains every edge of $G$ except for, perhaps, some edges in $G_1'\cup G_2'\cup G_3'$. Thus, these cycles cover all but at most $2^{11}n\log^{10} n$ edges, giving us the desired decomposition.
\end{proof}

\subsection{Decomposing a general graph}
\label{sec:decomp-general}
We are now ready to prove \Cref{thm:decompexander}, which we do in the following more quantitative form for convenience.
\begin{theorem}\label{thm:decompexander-explicit}
Any $n$-vertex graph can be decomposed into at most $6n$ cycles and $O(n\log^{274}n)$ edges.
\end{theorem}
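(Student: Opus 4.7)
The plan is to combine the two main results already established in this section in a direct orchestration; no new ideas are required. Set $\eps := 2^{-5}$ and $s := \lceil \log^{273} n \rceil$, and apply \Cref{splitting-into-expanders} to the given $n$-vertex graph $G$. This deletes at most $4sn\log n = O(n\log^{274}n)$ edges, and partitions the remaining edges into edge-disjoint $(\eps,s)$-expanders $G_1,\ldots,G_r$ with $\sum_i |G_i| \le 2n$. A small but crucial point: the parameter $s$ is uniform across pieces, while the logarithm appearing in the expansion definition is relative to $n_i := |G_i|$. Since $n_i \le n$, the bound $s \ge \log^{273} n_i$ continues to hold, so each $G_i$ satisfies the hypothesis of \Cref{lem:decompexander}.

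For every $G_i$ that is at least as large as the implicit threshold $n_0$ in \Cref{lem:decompexander}, apply that lemma to obtain a decomposition of $G_i$ into at most $3 n_i$ cycles and at most $2^{11} n_i \log^{10} n_i$ leftover edges. For each of the (at most $2n$) pieces whose order is below $n_0$, simply dump every one of their edges into the leftover; since each such piece contributes at most $\binom{n_0}{2}$ edges and $n_0$ is an absolute constant, their total contribution is $O(n)$.

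Summing across all pieces and using $\sum_i n_i \le 2n$, the total number of cycles is at most $\sum_i 3 n_i \le 6n$, and the total number of leftover edges is bounded by
\[
4sn\log n \;+\; \sum_i 2^{11} n_i \log^{10} n_i \;+\; O(n) \;\le\; 5 n \log^{274} n + 2^{12} n \log^{10} n + O(n) \;=\; O(n\log^{274}n),
\]
as desired. The remaining corner case of very small $n$ (below any fixed threshold) is trivial: place every edge of $G$ into the leftover, at a cost of $O(1)$ edges. There is no genuine obstacle here, as the work has already been done in \Cref{splitting-into-expanders} and \Cref{lem:decompexander}; the only subtle bookkeeping is verifying that a single choice of $s = \lceil\log^{273}n\rceil$ is large enough for the expander-decomposition lemma to apply uniformly to all output pieces, which is immediate from $n_i \le n$.
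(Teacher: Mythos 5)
Your proposal is correct and matches the paper's own proof: both apply \Cref{splitting-into-expanders} with $\eps=2^{-5}$ and $s\approx\log^{273}n$, discard small pieces into the leftover, apply \Cref{lem:decompexander} to the remaining pieces, and sum using $\sum_i|G_i|\le 2n$. The bookkeeping, including the observation that $s\ge\log^{273}|G_i|$ holds uniformly because $|G_i|\le n$, is handled in essentially the same way.
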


\begin{proof} Let $n_0\geq 2^{12}$ be sufficiently large so that the statement of \Cref{lem:decompexander} holds for every graph with at least $n_0$ vertices, and let $C=3n_0$. Let $s=\log^{273}n$ and $\eps=2^{-5}$. Using Lemma~\ref{splitting-into-expanders}, decompose $G$ into subgraphs $G_1,\ldots, G_r$ and at most $4sn\log n$ edges so that $|G_1|+\ldots+|G_r| \le 2n$ and, for each $i\in [r]$, $G_i$ is an $(\eps,s)$-expander. Let $I\subseteq [r]$ be the set of $i\in [r]$ with $|G_i|\geq n_0$, so that 
\[
\big|E(G)\setminus\big(\bigcup_{i\in I}E(G_i)\big)\big|\leq n_0\cdot \sum_{i=1}^r|G_i|+4s n \log n\leq 2n_0s\cdot n \log n.
\]
For each $i\in I$, using Lemma~\ref{lem:decompexander}, decompose $G_i$ into at most $3|G_i|$ cycles and $2^{11}|G_i|\log^{10}n$ edges, giving in total at most $3|G_1|+\ldots+3|G_r| \le 6n$ cycles and at most $2^{11}(|G_1|+\ldots+|G_r|)\log^{10}n\leq n_0s\cdot n \log n$ edges. In combination with the edges of $G-\bigcup_{i\in I}G_i$ this gives a decomposition of $G$ into at most $6n$ cycles and $3n_0sn\log n=Cn\log^{274}n$ edges.
\end{proof}

\subsection{Long cycles in expanders}
A final ingredient we need before proving Theorem~\ref{thm:logstar} is to show that, after removing long cycles and adding them to a decomposition, any expander in the resulting graph must be quite small. That is to say, any expander contains a long cycle, even if it has no robustness at all. This follows from a result of Krivelevich \cite{michael2019expanders} although since we do not need the full power or generality of that result, for completeness, we include a short proof using the Depth First Search (DFS) algorithm (first used in this manner in \cite{dfs2012}), as follows.

\begin{lemma}\label{lem:onelongcycle}
Any $n$-vertex $(\eps,0)$-expander, with $\eps \ge 2^{-5}$ and $n\geq 2^{30}/\eps^2$, contains a cycle of length $\Omega\left(\frac{n}{\log^4 n}\right)$.
\end{lemma}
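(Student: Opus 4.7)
The plan is to run depth-first search (DFS) on $G$ from an arbitrary root $r$ and exploit the resulting DFS tree $T$, together with the expansion assumption, to find a long cycle. For each vertex $v$, write $U_v$ for the subtree of $T$ rooted at $v$ and $\mathrm{Anc}(v)$ for the set of strict ancestors of $v$ in $T$. The key structural fact I will use repeatedly is that in undirected DFS every non-tree edge joins a vertex with one of its ancestors, so no edges cross between two disjoint subtrees; in particular, $N_G(U_v)\subseteq \mathrm{Anc}(v)$ for every $v$. I would pick a pivot vertex $u^*$ by walking down from $r$, at each step moving to the child whose subtree is largest, and stopping at the first vertex with $|U_{u^*}|\le 2n/3$ (which must occur because $|U_r|=n>2n/3$ while subtree sizes strictly decrease down any root-to-leaf path). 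Let $p^*$ be its parent, so $|U_{p^*}|>2n/3$ and every child of $p^*$ has subtree size at most $|U_{u^*}|$.

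The heart of the argument is a dichotomy on $|U_{u^*}|$. In the easy case $|U_{u^*}|\ge n/\log^2 n$, applying the expansion bound to $U_{u^*}$ gives $|N_G(U_{u^*})|\ge \eps|U_{u^*}|/\log^2 n\ge \eps n/\log^4 n$. These vertices lie among the ancestors of $u^*$ at pairwise distinct depths, so the shallowest $a\in N_G(U_{u^*})$ satisfies $\mathrm{depth}(a)\le \mathrm{depth}(u^*)-\eps n/\log^4 n$. Picking any $w\in U_{u^*}$ adjacent to $a$ (which exists since $a\in N_G(U_{u^*})$), the tree path from $a$ down to $w$ combined with the back-edge $wa$ gives a cycle of length at least $\mathrm{depth}(w)-\mathrm{depth}(a)+1\ge \eps n/\log^4 n=\Omega(n/\log^4 n)$.

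In the harder case $|U_{u^*}|<n/\log^2 n$, the heavy-path choice forces every child of $p^*$ to have subtree size less than $n/\log^2 n$. Greedily I would add such child subtrees into a set $U\subseteq U_{p^*}\setminus\{p^*\}$ while maintaining $|U|\le 2n/3$; since each addition is small, one obtains $|U|>2n/3-n/\log^2 n\ge n/2$ (using the assumption $n\ge 2^{30}/\eps^2$). Because no edges cross between sibling subtrees, $N_G(U)\subseteq \{p^*\}\cup\mathrm{Anc}(p^*)$, a set of size at most $\mathrm{depth}(p^*)+1$. The expansion bound $|N_G(U)|\ge \eps n/(2\log^2 n)$ thus forces $\mathrm{depth}(p^*)\ge \eps n/(2\log^2 n)-1$, and repeating the depth-comparison argument produces a cycle of length $\Omega(n/\log^2 n)$, comfortably beating the target. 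I expect the main obstacle to be precisely this small-subtree case: the key insight to make it work is that although $u^*$ itself is too small to expand usefully on its own, the inequality $|U_{p^*}|>2n/3$ combined with the smallness of every child subtree forces $p^*$ to have many children, and pooling their subtrees yields a large set whose neighbourhood is squeezed onto the short ancestor chain above $p^*$.
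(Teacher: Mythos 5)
Your argument is correct, and it takes a genuinely different route from the paper. Both proofs run DFS and both exploit the same structural fact (undirected DFS has no cross edges), but the mechanisms differ substantially. The paper tracks the DFS \emph{process}, stopping at the moment when the unexplored set $U$ and the processed set $R$ have equal size; since $N_G(U)\subseteq V(P)$, the active path $P$ is long, and a separate segment-splitting argument ($P=X\cup Y\cup Z$, with $Y$ either threaded by a cycle or serving as an expansion-violating separator) is then needed to close $P$ into a cycle. You instead work with the DFS \emph{tree} directly: a heavy-path descent picks a pivot $u^*$ with $|U_{u^*}|\le 2n/3<|U_{p^*}|$, and the dichotomy on $|U_{u^*}|$ versus $n/\log^2 n$ lets you find, in either case, a vertex set all of whose external neighbours lie on a single root-to-leaf path, so that expansion immediately forces a long back edge and hence a long cycle -- no separator argument is needed. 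Two small remarks: you should note (as the paper does) that expansion forces $G$ to be connected, so the DFS tree is indeed a single tree spanning $V(G)$; and it is worth saying explicitly in case~2 that, as in case~1, the $|N_G(U)|$-many distinct depths among $\{p^*\}\cup\mathrm{Anc}(p^*)$ force some $a\in N_G(U)$ with $\mathrm{depth}(a)\le\mathrm{depth}(p^*)+1-\eps n/(2\log^2 n)$, and any $w\in U$ adjacent to $a$ has $\mathrm{depth}(w)>\mathrm{depth}(p^*)$, so the tree path from $a$ to $w$ plus the back edge $wa$ closes the desired cycle. Your bound is in fact marginally better in its $\eps$-dependence ($\eps n/\log^4 n$ versus the paper's $\Theta(\eps^2 n/\log^4 n)$), though this is inconsequential here.
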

\begin{proof}
Let $G$ be our $(\eps,0)$-expander. Observe first that the expansion condition guarantees $G$ is connected since otherwise, we could find a connected component of $G$ of size less than $\frac{|G|}2$, whose vertex set then does not expand at all. 

We now run the DFS algorithm on $G$ as follows. At any point during the process, we have the set of unexplored vertices $U$, the path $P$ with active endvertex $t(P)$, and the set of processed vertices $R$. Picking an arbitrary vertex $r\in V(G)$, We start with $U=V(G)\setminus \{r\}, R=\emptyset$ and with $P$ being the path with vertex set $\{r\}$, and set $t(P)=r$. 
At each step, if there is a neighbour $v$ of $t(P)$ in $U$ we add it to $P$ with the edge $t(P)v$ and let $t(P)=v$. Otherwise, we move $t(P)$ from $P$ to $R$ and set its neighbour in $P$ as the new $t(P)$.

In the above process, in each step, we either move precisely one vertex from $U$ to $P$ or precisely one vertex from $P$ to $R$. Note also that at any point in the process, there are no edges between $U$ and $R$ since a vertex is only moved to $R$ once it has no neighbours in $U$, and $U$ only ever has vertices removed from it. 
Finally, as $G$ is connected, note that the process finishes with all the vertices being in $R$, and with $P$ and $U$ being empty.

Thus, we start with $|U|=n-1$ and $|R|=0$ and finish with $|U|=0$ and $|R|=n$, at each step reducing $|U|$ by one or increasing $|R|$ by one. Therefore, at some point in the process, we must have $|U|=|R|$. Since there are no edges between $U$ and $R$ we know that all the neighbours of $U$ must belong to $P$, which therefore has size $|P|=n-2|U|\geq |N_G(U)| \geq \frac{\eps |U|}{\log^2n}$, where the last inequality follows by the expansion property applied to $U$, which we can do since $|U|=\frac{n-|P|}2\leq \frac{n}{2}$. This in turn implies that $|P|\geq  \frac{\eps n}{3\log^2 n}$ since either $|U| \ge \frac{n}{3}$ or $|P|=n-2|U| \ge \frac n3$.

Now, let $X$, $Y$, $Z$ be sets of consecutive vertices of $P$ in that order which partition $V(P)$ so that $|X|,|Z|\geq \frac{|P|}3$ and $\frac{\eps^2 n}{18\log^4n}\leq |Y|< \frac{\eps^2 n}{9\log^4 n}$. 
If $X$ and $Z$ are connected by some path in $G\setminus Y$, then take a shortest path, $Q$ say, between $X$ and $Z$ in $G\setminus Y$ and note that, combined with the segment of $P$ between the endvertices of $Q$, this gives a cycle containing each vertex in $Y$, which thus has size $\Omega\left(\frac{n}{\log^4 n}\right)$. 
If $X$ and $Z$ are not connected by a path in $G\setminus Y$, then we can take a partition $V(G)\setminus Y=X'\cup Z'$ with no edges between $X'$ and $Z'$ in $G$, and $X\subseteq X'$ and $Z\subseteq Z'$. Without loss of generality, suppose that $|X'|\leq \frac n2$. By the expansion condition we have $|N_G(X')|\geq \frac{\eps |X|}{\log^2n} \geq \frac{\eps |P|}{3\log^2n} \geq \frac{\eps^2 n}{9\log^4 n}$, yet we also have $|N_G(X')|\leq |Y|< \frac{\eps^2 n}{9\log^4n}$, a contradiction.
\end{proof}

\subsection{Proof of \texorpdfstring{\Cref{thm:logstar}}{Theorem 2}}
\label{subsec:iteration}
To decompose a graph into cycles and edges and prove \Cref{thm:logstar}, we now repeatedly do the following:
\begin{itemize} 
\item letting $d$ be the average degree of the graph consisting of the remaining edges, we remove maximally many edge disjoint cycles with length at least $d$, 
\item we then decompose the remaining edges exactly into expanders (using \Cref{splitting-into-expanders} with $s=0$) and show that these must be small subgraphs as they each have no cycle with length at least $d$ (using \Cref{lem:onelongcycle}),
\item and finally we decompose each of these small subgraphs into cycles and edges using \Cref{thm:decompexander-explicit}.
\end{itemize}
Together this comprises the iterative step we use, which decomposes the $n$-vertex graph $G$ with average degree $d$ into $O(n)$ cycles and $n\log^{O(1)}d$ edges. We then iterate on the graph of the edges in this decomposition, noting that its average degree is much smaller than $d$. We state and prove the outcome of one iterative step as the following lemma, for convenience and its own interest.

\begin{lemma}\label{lem:density}
Any $n$-vertex graph $G$ with average degree $d\ge 2$ can be decomposed into $O(n)$ cycles and a subgraph with average degree $O(\log^{274} d)$.
\end{lemma}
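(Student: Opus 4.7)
The plan is exactly the three-step iteration the authors have just described. First, greedily extract a maximal collection $\mathcal{C}_{\text{long}}$ of edge-disjoint cycles of length at least $d$ in $G$ and put all of them into the decomposition. Since $|E(G)| \le nd/2$ and each extracted cycle uses at least $d$ edges, $|\mathcal{C}_{\text{long}}| \le n/2$. Let $G'$ be the subgraph of remaining edges; by maximality every cycle of $G'$ has length strictly less than $d$.

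Second, apply \Cref{splitting-into-expanders} to $G'$ with $s=0$ and $\eps = 2^{-5}$. Because $s=0$ no edges are deleted, so we obtain a genuine edge partition $E(G') = \bigcup_{i=1}^{r} E(G_i)$ with $\sum_i |G_i| \le 2n$ and each $G_i$ a $(2^{-5},0)$-expander on a subset of $V(G)$. Each $G_i$ inherits from $G'$ the property that all its cycles have length less than $d$. Whenever $|G_i| \ge 2^{30}/\eps^2$, \Cref{lem:onelongcycle} supplies a cycle in $G_i$ of length $\Omega(|G_i|/\log^4 |G_i|)$; combining this with the absence of long cycles forces $|G_i|/\log^4 |G_i| = O(d)$, i.e.\ $|G_i| = O(d\log^4 d)$. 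The blocks below the threshold have constant size and satisfy the same bound trivially.

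Third, apply \Cref{thm:decompexander-explicit} to each $G_i$, decomposing it into at most $6|G_i|$ cycles and $O(|G_i| \log^{274}|G_i|)$ leftover edges. Since $|G_i| = O(d\log^4 d)$ gives $\log|G_i| = O(\log d)$, the leftover per block is $O(|G_i| \log^{274} d)$. Summing and combining with the cycles from step one, the total number of cycles used is
\[
|\mathcal{C}_{\text{long}}| + \sum_{i=1}^{r} 6|G_i| \le \tfrac{n}{2} + 12n = O(n),
\]
while the total number of leftover edges is $O(\log^{274} d) \cdot \sum_i |G_i| = O(n \log^{274} d)$. On $n$ vertices this corresponds to average degree $O(\log^{274} d)$, as required.

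The only substantive ingredient is step two. Converting the absence of long cycles in $G'$ into an upper bound on the sizes of the expander blocks $G_i$ is crucial: without \Cref{lem:onelongcycle} the polylogarithmic factor $\log^{274}|G_i|$ from \Cref{thm:decompexander-explicit} could degrade to $\log^{274} n$, ruining the bound and breaking the outer $\log^\star n$ iteration. Everything else is straightforward bookkeeping.
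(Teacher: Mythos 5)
Your proof is correct and follows essentially the same three-step argument as the paper: remove a maximal collection of long cycles, decompose the remainder into expanders via \Cref{splitting-into-expanders} with $s=0$, bound each expander's size using \Cref{lem:onelongcycle}, and finish with \Cref{thm:decompexander-explicit}. The only (minor) difference is that you explicitly handle the threshold $|G_i| \ge 2^{30}/\eps^2$ required by \Cref{lem:onelongcycle}, a detail the paper leaves implicit.
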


\begin{proof} Let $\mathcal{C}$ be a maximal collection of edge disjoint cycles with length at least $d$ in $G$. As there are $\frac{nd}2$ edges in $G$, we have $|\mathcal{C}|\leq \frac n2$. Let $G'$ be $G$ with the edges of the cycles in $\mathcal{C}$ removed, so that $G'$ has no cycles with length at least $d$.
Apply \Cref{splitting-into-expanders} with $s=0$ and  $\eps=2^{-5}$ to obtain a full decomposition (since $s=0$) of $G'$ into subgraphs $G_1,\ldots, G_k$, such that $|G_1|+\ldots+|G_k|\le 2n$ and, for each $i\in [k]$, $G_i$ is a $(2^{-5},0)$-expander. For each $i\in [k]$, as $G'$ and hence $G_i$ has no cycle with length at least $d$, we have by \Cref{lem:onelongcycle} that $|G_i| =  O(d \log^4 d)$. Using \Cref{thm:decompexander-explicit}, we decompose each $G_i$ into at most $6|G_i|$ cycles and $O(|G_i| \log^{274} |G_i|)= O(|G_i| \log^{274} d)$ edges. Collecting these cycles and edges over all $i\in [k]$, and including the cycles in $\mathcal{C}$, we get a decomposition of $G$ into at most
$$\frac n2+\sum_{i=1}^k 6|G_i| \le 13 n$$ cycles and $$O\left(\sum_{i=1}^k |G_i| \log^{274} d\right)=O(n \log^{274} d)$$ edges. Noting these edges form a subgraph with average degree $O(\log^{274} d)$ completes the proof. 
\end{proof}

Finally, by iterating \Cref{lem:density}, we can prove \Cref{thm:logstar}, i.e., that any $n$-vertex graph can be decomposed into $O(n\log^{\star}n)$ cycles and edges.


\begin{proof}[ of \Cref{thm:logstar}]
Using Lemma~\ref{lem:density}, let $C\geq 1$ be large enough that any $n$-vertex graph with average degree $d\geq 2$ has a decomposition into at most $Cn$ cycles and a subgraph with average degree at most $C\log^{274}n$. Let $G_0$ be any $n$-vertex graph, and, for each $i\geq 0$, let $G_{i+1}$ be a graph with the fewest edges that can be formed by removing at most $Cn$ edge disjoint cycles from $G_{i}$. For each $i\geq 0$, let $d_i$ be the average degree of $G_i$, so that we have $d_{i+1}\leq C\log^{274}d_i$ for each $i\geq 0$ for which $d_i\geq 2$.

Let $\ell$ be the largest integer such that the log function applied iteratively $\ell$ times to $n$ is still above $300C$, i.e., the largest integer such that $\log^{[\ell]}n\geq 300C$. Note that $\ell\leq \log^{\star}n$ and $\log^{[\ell]}n< 2^{300C}$. We will show, for each $0 \le i\le \ell$, that $d_i\leq C(300\log^{[i]}n)^{274}$. Note that $d_0\leq n$, so that this easily holds with $i=0$. Then, assuming it is true for some $0 \le i \le \ell-1$ and that $d_i\geq 2$ (for otherwise $d_{i-1}\leq d_i\leq 2$), we have
\begin{align}
d_{i+1}&\leq C\log^{274}d_i\leq C(\log (C(300\log^{[i]}n)^{274}))^{274}\nonumber\\&= C(\log C+274\log300+274\log^{[i+1]}n)^{274}
\leq C(300\log^{[i+1]}n)^{274},\label{eqn:Clogi}
\end{align}
where in the last inequality we used $26\log^{[i+1]}n \ge 26\log^{[\ell]} n\ge 26\cdot 300C \ge \log C + 274 \log 300$. 

Thus, in particular, we have that the average degree of $G_\ell$ is at most $C(300\log^{[\ell]}n)^{274}\leq C(300\cdot 2^{300C})^{274}=O(1)$. Furthermore, to get from $G_0$ to $G_\ell$, we have removed at most $C\ell\leq Cn\log^{\star}n$ cycles. Therefore, $G_0$ has a decomposition into $O(n\log^{\star}n)$ cycles and $O(n)$ edges, which, as $G_0$ is an arbitrary $n$-vertex graph, completes the proof.\end{proof}


\section{Concluding remarks}\label{sec:final}
\textbf{Our results.} In this paper, we gave new bounds on two of the most central open problems on cycle decompositions -- the Erd\H{o}s-Gallai conjecture from 1966 that any $n$-vertex graph can be decomposed into $O(n)$ cycles and edges, and Haj\'os's conjecture from 1968 asserting that any $n$-vertex Eulerian graph can be decomposed into at most $\frac{n}{2}$ cycles, where the bound in Haj\'os's conjecture follows easily from Theorem~\ref{thm:logstar}. Indeed, given any $n$-vertex Eulerian graph, we can apply this result first to remove $O(n\log^{\star}n)$ cycles and leave only $O(n \log^{\star} n)$ remaining edges. The remaining edges then still form an Eulerian graph, which has a cycle decomposition by the observation of Veblen quoted in the introduction. As this cycle decomposition has fewer cycles than the number of these edges, we get, altogether, a decomposition of the original Eulerian graph into $O(n\log^{\star}n)$ cycles.

\textbf{Lower bounds for the Erd\H{o}s-Gallai conjecture.} As noted in the introduction, Haj\'os's conjecture implies the Erd\H{o}s-Gallai conjecture, as any $n$-vertex graph can be decomposed into an Eulerian graph and at most $n-1$ edges --- for example by taking the union of a maximal collection of disjoint cycles, and the edges in the remaining acyclic subgraph. Therefore, if Haj\'os's conjecture holds, then any $n$-vertex graph would have a decomposition into at most $\frac{n}{2}$ cycles and at most $n-1$ edges, and thus at most $\frac32n$ cycles and edges. It is known that only $(\frac{3}{2}+o(1))n$ cycles and edges are needed to decompose any $n$-vertex graph with linear minimum degree (due to Gir{\~a}o, Granet, K{\"u}hn, and Osthus~\cite{girao2021path}), and also that $\frac{3}{2}$ is best possible here. This latter fact was observed by Erd\H{o}s~\cite{erdHos1983some} in 1983 who remarked that there are graphs requiring $(\frac{3}{2}-o(1))n$ cycles and edges, likely referring to the following generalisation of an example of Gallai (see \cite{erdos1966representation}).  Take $k\in \N$ and consider the complete bipartite graph $G$ with disjoint vertex classes $A$ and $B$ with $|A|=2k+1$ and $|B|=n-2k-1$. Each vertex in $B$ has odd degree in $G$, and $G[B]$ contains no edges, so any decomposition of $G$ into cycles and edges must contain at least $|B|$ edges. As each cycle in $G$ has length at most $2|A|$, there is thus no decomposition of $G$ into fewer than $$|B|+\frac{|A||B|-|B|}{2|A|}=\left(\frac{3}{2}-\frac{1}{2|A|}\right)|B|=\left(\frac{3}{2}-\frac{1}{4k+2}-o(1)\right)n$$ cycles and edges.

\textbf{Our tools.} Many of our tools decompose an $n$-vertex graph into $O(n)$ cycles and a `leftover' subgraph, and might prove useful towards settling the conjecture in full. For example, the proof of \Cref{lem:density} shows that any $n$-vertex graph with no cycles longer than $t$ decomposes into $O(n)$ cycles and $O(n\log^{274}t)$ edges. Furthermore, for any constant $k$, running our iteration argument using \Cref{lem:density} $k+1$ times shows that any $n$-vertex graph can be decomposed into $O(kn)$ cycles and $O(n\log^{[k]}n)$ edges (see \eqref{eqn:Clogi}). This latter result reduces the Erd\H{o}s-Gallai conjecture to the case of arbitrarily sparse graphs, although this seems only to focus on the most difficult case. The main bottleneck in our argument seems to be the number of edges left uncovered in the almost decomposition into robust expanders, i.e.\ when applying \Cref{splitting-into-expanders}. It appears hard to reduce the number of edges enough to make an improvement on \Cref{thm:logstar} while getting enough properties in the expansion (robust or otherwise) to aid any cycle decomposition.

\textbf{Potential further applications.} 
In addition to their application towards the Erd\H{o}s-Gallai conjecture, we believe the tools developed here could be useful in other settings due to the variety of applications that have been found for sublinear expansion since its introduction in its original form by Koml\'os and Szemer\'edi, as well as the only very recent use of the robust sublinear expansion discussed in \Cref{sec:generalexpansionchat}. In fact, one can use some of our ideas, specifically a more precise version of \Cref{prop:expansion-red-blue}, to give a simpler proof of a result of Tomon \cite{tomon2022robust} on finding rainbow cycles in properly coloured graphs, and give a different proof on a result of Wang \cite{wang2022rainbow} (itself an improvement of a result of Tomon~\cite{tomon2022robust}). In addition, Letzter~\cite{shobro} has very recently adapted some of our methods in order to find separating paths systems.

\medskip

\textbf{Acknowledgments.} We would like to thank Benny Sudakov, Jacob Fox, Stefan Glock and Shoham Letzter for helpful comments which improved this paper. The first author would like to gratefully acknowledge the support of the Oswald Veblen Fund.


\end{document}